\author{Léonard Cadilhac}
\theoremstyle{theorem}
\newtheorem{definition}{Definition}[section]
\newtheorem{theorem}[definition]{Theorem}
\newtheorem{property}[definition]{Proposition}
\theoremstyle{definition}
\newtheorem{remark}[definition]{Remark}
\newtheorem{lemma}[definition]{Lemma}
\theoremstyle{theorem}
\newtheorem{corollary}[definition]{Corollary}
\newcommand{\Rb}{\mathbb{R}}
\newcommand{\Mb}{\mathbb{M}}
\newcommand{\Nb}{\mathbb{N}}
\newcommand{\Cb}{\mathbb{C}}
\newcommand{\Hs}{\mathscr{H}}
\newcommand{\A}{\mathcal{A}}
\newcommand{\B}{\mathcal{B}}
\newcommand{\M}{\mathcal{M}}
\newcommand{\N}{\mathcal{N}}
\newcommand{\E}{\mathcal{E}}
\newcommand{\F}{\mathcal{F}}
\newcommand{\Pc}{\mathcal{P}}
\newcommand\bbox[1]{\left[ #1 \right]}
\newcommand\Norm[1]{\big\| #1 \big\|}
\newcommand\md[1]{\left| #1 \right|}
\newcommand\Floor[1]{\left \lfloor {#1} \right \rfloor}
\newcommand\p[1]{\left( #1 \right)}
\newcommand\Minf{\M_{\infty}^{-}}
\newcommand{\les}{\lesssim}
\newcommand{\e}{\varepsilon}
\newcommand{\Ind}{\mathbf{1}}
\newcommand\supb[1]{\underset{#1}{\sup}}
\newcommand\Sum[2]{\sum\nolimits_{#1}^{#2}}
\newcommand\Int[2]{\int\limits_{#1}^{#2}}
\newcommand{\CZ}{\text{Calder\'on-Zygmund}}
\newcommand\addtag{\refstepcounter{equation}\tag{\theequation}}
\title{Noncommutative Khintchine inequalities in interpolation spaces of $L_p$-spaces}
\author{Léonard Cadilhac}
\begin{document}

\maketitle

\begin{abstract}
We prove noncommutative Khintchine inequalities for all interpolation spaces between $L_p$ and $L_2$ with $p<2$. In particular, it follows that Khintchine inequalities hold in $L_{1,\infty}$. Using a similar method, we find a new deterministic equivalent for the $RC$-norm in all interpolation spaces between $L_p$-spaces which unifies the cases $p > 2$ and $p < 2$. It produces a new proof of Khintchine inequalities for $p<1$ for free variables. To complete the picture, we exhibit counter-examples which show that neither of the usual closed formulas for Khintchine inequalities can work in $L_{2,\infty}$. We also give an application to martingale inequalities.
\end{abstract}

\thanks{\textbf{Keywords:} Khintchine inequalities, noncommutative integration, symmetric spaces.}

\section{Introduction}

This paper is intended as a step towards completing the study of noncommutative Khintchine inequalities in interpolation spaces between $L_p$-spaces. No satisfying results were known in $L_{1,\infty}$ and $L_{2,\infty}$ in spite of the extensive literature  on the subject which includes some variants in general symmetric spaces. The remarkable growth of this topic in the last decades is to be attributed to the central role Khintchine inequalities play in noncommutative analysis. Similarly to their classical counterpart, they appear constantly when the norm of an unconditional sequence has to be estimated, and they allow us to describe the Banach space structure of the span of independent or free random variables. They were a stepping stone to develop noncommutative martingale inequalities which are essential and powerful tools to translate classical notions to the noncommutative setting. 

The seminal result of Lust-Piquard \cite{Lus86} and then Lust-Piquard and Pisier \cite{LusPis91} who first fomulated and proved Khintchine inequalities in the setting of noncommutative integration (for Rademacher variables and in $L_p$-spaces) led to generalisations spreading into different directions. In the context of free probability or analysis on the free group, they were introduced by Pisier and Haagerup in \cite{HaaPis93}, and studied further in different works (see for example \cite{ParPis05} and \cite{RicXu06}). As mentioned before, Khintchine inequalities are also the precursors of noncommutative martingale inequalities (\cite{PisXu97}, \cite{Jun02}, \cite{JunXu03}), another pillar of the theory, see for example \cite{JunXu08}, \cite{Dir15} and \cite{Narcisse2015}. Closely related to our subject, for more than a decade, attention has been given towards general symmetric spaces. One can mention, for example, the work of Lust-Piquard and Xu (\cite{LusXu07}), Le Merdy and Sukochev (\cite{LeMSuk08}) and Dirksen, de Pagter, Potapov, Sukochev (\cite{DirSuk11}). But only recently the case of quasi-Banach spaces was tackled by Pisier and Ricard in \cite{PisierRicard2017} who proved Khintchine inequalities in noncommutative $L_p$-spaces for $p<1$. The latter paper gives the final key inequality to apply the method found in \cite{Pis09} by Pisier. Our method takes inspiration from \cite{RicardDirksen} where Dirksen and Ricard proved the upper Khintchine inequalities in a very efficient way. We give a similar proof of the lower Khintchine inequality which is usually obtained by duality. This partially explains the difficulty of proving Khintchine inequalities in $L_p$-spaces for $p<1$ or in any quasi-Banach space. 

We present two different results (Sections \ref{section:projection} and \ref{Section alpha}) with independent proofs though they partially rely on the same idea. In the first one, we show that the lower Khintchine inequality in $L_p$ for $p<2$ implies the lower Khintchine inequalities for all interpolation spaces between $L_p$ and $L_{\infty}$ with a decomposition that does not depend on the space. This, combined with known results, directly implies Khintchine inequalities in $L_{1,\infty}$, which could not be reached before due to the inapplicability of interpolation or duality techniques in this case. A motivation to prove this last result was that it allows us to prove the weak-$1$-boundedness of $\CZ$ operators in the noncommutative setting for Hilbert-valued kernels (\cite{MeiPar09}) directly from the scalar-valued kernel case (\cite{Par09}), see \cite{Cad18}.

The second theorem gives a deterministic equivalent for "free averages" in every interpolation spaces between $L_p$-spaces. The remarkable feature here is that its formulation does not depend on whether $p<2$ or $p>2$. In particular, it holds in $L_{2,\infty}$ which is a tricky case since neither of the two usual formulas for Khintchine inequalities work (see Section \ref{section:counterexample}). Note that a deterministic formula was already found using interpolation methods  by Pisier in \cite{Pis11}. Our equivalent is less tractable than the usual formulas. It is obtained by first proving that any sequence of operators $(x_i)$ admits a factorisation of the form $\alpha (u_i) + (u_i) \beta$ where $\alpha$ and $\beta$ are positive operators and $(u_i)$ has good properties. Then, if $(\xi_i)$ is a sequence of free Haar unitaries, the norm of $\Sum{}{}x_i\otimes \xi_i$ happens to coincide with the norm of $\alpha \oplus \beta$ in all interpolation spaces between $L_p$-spaces. This yields a new proof of free Khintchine inequalities for $p<1$. It does not apply to Rademacher variables since it relies on Haagerup's inequality (\cite{Haa78}) together with a H\"older inequality for anti-commutators found in \cite{Ric18}.

In Section \ref{section:preliminaries}, we give a brief introduction to noncommutative analysis and introduce the tools we need. This allows us to precisely state our main theorems. We go into more details to prove some properties of the $K$-functional. Though most are well-known to the community, we were not successful in finding a published reference for them. In Section \ref{section:martingale inequalities}, we give an application of the first result to noncommutative martingale inequalities and in section \ref{section:technical lemmas}, we prove some technical lemmas needed in the core of the paper. 

\section{Preliminaries} \label{section:preliminaries}

\subsection{Noncommutative integration}

In this section we briefly recall definitions of some of the main objects appearing in noncommutative integration. We will suppose that $\M$ is a von Neumann algebra with a semifinite normal nonnegative faithful trace $\tau$. Together, they form a \emph{noncommutative measure space}. For any $p \in (0,\infty)$, we can define the $L_p$-norm (or quasi-norm) on this space by Borel functional calculus and the following formula:
$$\Norm{x}_p = \tau (\md{x}^p)^{1/p} .$$
The completion of $\{ x \in \M : \Norm{x}_p < \infty \}$ with respect to $\Norm{.}_p$ is denoted by $L_p(\M)$ and verifies properties similar to those of classical $L_p$-spaces. 

The noncommutative analog of measurable functions is denoted by $L_0(\M)$ and is the space of unbounded operators $x$ affilated with $\M$ which are $\tau$-measurable i.e there exists $\lambda \in \Rb^+$ such that $\tau(\Ind_{(\lambda,\infty)}(\md{x})) < \infty$ (where $\Ind_{(\lambda,\infty)}(\md{x}))$ is defined by functional calculus). This space $L_0(\M)$ continuously contains $L_p(\M)$ for all $p$. This makes any couple $(L_p(\M)$, $L_q(\M))$ compatible in the sense of interpolation.

The \emph{support} of any self-adjoint element $x\in L_0(\M)$ is defined as the least projection $s(x)$ such that $s(x)x = x$. Let $$\M_c = \{ x\in \M : \tau(s(\md{x})) < \infty \}$$ be the space of bounded, finitely supported operators in $\M$. For any $p\in (0,\infty)$, $\M_c$ is dense in $L_p(\M)$.
 
Denote by $\Pc(\M)$ the set of orthogonal projections in $\M$ and by $\Pc_c(\M)$ the set of finite projections in $\M$. A crucial tool to understand and study those spaces is the {\it generalised singular numbers} $\mu(x)$ associated to any $x\in L_0(\M)$. They can be defined by the following formula:
\begin{align*}
\mu(x) : & \Rb^+ \to \Rb^+ \\
& t \mapsto \mu_t(x) = \inf \{ \Norm{ex}_{\infty} : e\in \Pc(\M), \tau(1-e) \leq t \}, \\
&\ \ \ \ \ \ \ \ \ \ \ \ \ = \sup \{ a\in\Rb^+ : \tau(\Ind_{(a,\infty)}(\md{x})) \geq t \}.
\end{align*}
This formula may not be enlightning but $\mu(x)$ is to be thought as a nonincreasing nonnegative function which has the same distribution as $x$, in particular $\Norm{\mu(x)}_p = \Norm{x}_p$ for all $p$. Recall also the homogeneity property of $\mu$: for any $x\in L_0(\M)$, and $p\in (0,\infty)$, $\mu(x^p) = \mu(x)^p$.

\subsection{Symmetric Spaces, Interpolation}

Symmetric spaces (see \cite{KrePetSem82}) generalise $L_p$-spaces and can also be defined in the noncommutative setting (\cite{KalSuk08}). If $E$ is a symmetric function space on $\Rb^+$ equipped with the norm $\Norm{.}_E$, then $E(\M)$ is the space of all $x\in L_0(\M)$ such that $\mu(x)\in E$ equipped with the norm
$$\Norm{x}_{E(\M)} = \Norm{\mu(x)}_{E}.$$
This paper only deals with symmetric spaces which are interpolation spaces between $L_p$-spaces. The interpolation methods developped in the classical setting translate very well to noncommutative analysis and are some of the main techniques constantly used in the field. In particular, the noncommutative Lorentz spaces can be defined and keep their interpolation related properties (see \cite{Xu_notes}). For a general introduction to interpolation see \citep{BerghLofstrom}. Let us recall the definition of an interpolation space.

\begin{definition}
Let $(A,B)$ be a compatible couple of quasi-Banach spaces. We say that a quasi-Banach space $E$ is an interpolation space between $A$ and $B$ if $A\cap B \subset E \subset A+B$ and there exists a constant $C>0$ such that for every bounded operator $T:A+B \to A+B$ such that its restriction to $A$ (resp. $B$) if bounded of norm $1$ from $A$ to $A$ (resp. $B$ to $B$), $T$ is bounded from $E$ to $E$ with norm less than $C$.
\end{definition}

We will only use one notion from the theory of interpolation, the $K$-functional. Recall that it is defined as follows. Let $A,B$ be two quasi-Banach spaces, $x\in A+B$ and $t>0$ then:
$$K_t(x,A,B) := \inf \{ \Norm{y}_A + t\Norm{z}_B : y \in A, z\in B, y+z = x \}.$$
We will come back to this expression in the last subsection of the preliminaries. Until then, let us only mention the  following result which will enable us to use estimates on the $K$-functional to obtain inequalities for norms in general interpolation spaces. The difficult part of its proof is the commutative case which is dealt with in \cite{Spa78} except for the case $p<1$ and $q=\infty$ which is proved in \cite{Cad18Maj} (note that the case of sequence spaces is treated in \cite{Cwi81}). The fact that it still holds for noncommutative $L_p$-spaces is a direct consequence of proposition \ref{prop:Kt_independant of M}. 
\begin{property} \label{prop:kfuncandinterpolation}
Let $p,q\in (0,\infty]$, let $E$ be an interpolation space between $L_p(0,\infty)$ and $L_q(0,\infty)$ then there exists a constant $C$ such that for any $x,y \in E(\M)$, if $K(x,L_p(\M),L_q(\M)) \leq K(y,L_p(\M),L_q(\M))$ then
$$\Norm{x}_{E(\M)} \leq C \Norm{y}_{E(\M)}.$$
\end{property}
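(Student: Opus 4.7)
My plan is to reduce the noncommutative statement to its commutative analogue by means of the generalised singular numbers and the forthcoming Proposition \ref{prop:Kt_independant of M}, then invoke the results of \cite{Spa78} and \cite{Cad18Maj} which handle the commutative case.

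More concretely, the first step is to translate the hypothesis on the $K$-functional on $L_p(\M)+L_q(\M)$ into a statement about the $K$-functional on $L_p(0,\infty)+L_q(0,\infty)$ applied to the singular value functions. By Proposition \ref{prop:Kt_independant of M}, for any $z \in L_p(\M)+L_q(\M)$ and $t>0$,
\[
K_t\bigl(z,L_p(\M),L_q(\M)\bigr) \simeq K_t\bigl(\mu(z),L_p(0,\infty),L_q(0,\infty)\bigr),
\]
up to a constant depending only on $p$ and $q$. Applied to $x$ and $y$, the assumed pointwise inequality $K_\cdot(x) \leq K_\cdot(y)$ thus transfers, up to a multiplicative constant, into $K_\cdot(\mu(x)) \leq C' K_\cdot(\mu(y))$ in the commutative couple.

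The second step is to use the commutative version of the result: if $E$ is an interpolation space between $L_p(0,\infty)$ and $L_q(0,\infty)$, then there is a constant $C''$ such that $K_\cdot(f) \leq K_\cdot(g)$ implies $\Norm{f}_E \leq C'' \Norm{g}_E$ for nonnegative $f,g \in E$. This is exactly the content of \cite{Spa78} when $p,q \geq 1$ (or more generally when the couple behaves Banach-like), and the remaining case $p<1$, $q=\infty$ is covered by \cite{Cad18Maj}. Applying this to $f=\mu(x)$ and $g=C'\mu(y)$ gives $\Norm{\mu(x)}_E \leq C''C'\Norm{\mu(y)}_E$.

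The final step is just to unpack the definition of $E(\M)$: by construction $\Norm{x}_{E(\M)} = \Norm{\mu(x)}_E$ and likewise for $y$, so the previous inequality reads $\Norm{x}_{E(\M)} \leq C \Norm{y}_{E(\M)}$ with $C = C''C'$, as required. The main substantive obstacle is really hidden in the two cited inputs: the identification of the $K$-functional of the noncommutative couple with the commutative one on singular values (Proposition \ref{prop:Kt_independant of M}), and the fact that the class of interpolation spaces between $L_p$ and $L_q$ on $(0,\infty)$ is described by the Brudnyi--Krugljak / Sparr type $K$-monotonicity property; once these are granted, the argument above is a straightforward concatenation.
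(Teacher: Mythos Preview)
Your proposal is correct and follows exactly the approach the paper itself indicates: reduce to the commutative case via Proposition \ref{prop:Kt_independant of M}, invoke \cite{Spa78} and \cite{Cad18Maj} for the commutative $K$-monotonicity, and then use the definition $\Norm{x}_{E(\M)} = \Norm{\mu(x)}_E$. The paper does not give a more detailed argument than the one you have written.
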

In the main sections of the paper, we find deterministic estimates of some $K$-functionals. They translate to Khintchine-type inequalities by means of this proposition. 

\subsection{Noncommutative Khintchine inequalities} \label{section:noncommutative khintchine inequalities}

Let us now introduce the general framework of noncommutative Khintchine inequalities. Denote by $S(\M)$ the set of finite sequences of elements of $\M_c$. Note that for $a,b\in \M$ and $x\in S(\M)$ we can naturally define the product $axb$ by:
$$axb = (ax_nb)_{n\geq 0}.$$
Let $x\in S(\M)$. Denote by $N\in\Nb$ its length and consider the projection,
$$e = s\p{\Sum{i=0}{N}x_i^*x_i} \vee s\p{\Sum{i=0}{N}x_ix_i^*}.$$
The fact that $x\in (e\M e)^N$ will be useful in various proofs in the paper since it enables us to work with only finite sequences of finitely supported elements.

Consider $(\A,\tau_{\A})$ another noncommutative probability space ($\tau_{\A}(1) = 1$) and $\xi = (\xi_i)_{i\in\Nb}$ a sequence of elements in $\A$. Recall that the elements of $\B(\ell^2)$ can be identified with infinite matrices and that $\B(\ell^2)$ is endowed with its canonical trace. We will denote by $e_{n,m}$ the element $(\delta_{i,n}\delta_{j,m})_{i,j\in\Nb}\in\B(\ell^2)$. 
For $x$ in $S(\M)$, we define $Rx = \Sum{n\geq 0}{} x_n \otimes e_{1,n} $, $Cx = \Sum{n\geq 0}{} x_n \otimes e_{n,1}$ which are understood as elements of the von Neumann algebra $\M_{\B} := \M \overline{\otimes} \B(\ell^2)$ and $Gx = \Sum{n\geq 0}{} x_n \otimes \xi_n$ in $\M_{\A} := \M \overline{\otimes} \A$. Both algebras are equipped with the tensor product trace. Throughout the paper, $\M$ will be identified with $\M \otimes e_{1,1}$ in $\M_{\B}$ and with $\M \otimes 1$ in $\M_{\A}$. With this in mind, note that:
\begin{center}
$\md{(Rx)^*} = \p{\Sum{n\geq 0}{}x_nx_n^*}^{1/2}$ and $\md{Cx} = \p{\Sum{n\geq 0}{} x_n^*x_n}^{1/2}$.
\end{center}

Fix $E$ a symmetric space. The quantity we want to estimate is $\Norm{x}_{\Hs_E} := \Norm{Gx}_{E(\M_{\A})}$ which is a quasi-norm on $S(\M)$. Denote by $\Hs_E(\M)$ ($\Hs_E$ if there is no ambiguity) the completion of $S(\M)$ for $\Norm{.}_{\Hs_E}$. Similarly, define $\Norm{x}_{R_E} := \Norm{Rx}_{E(\M_{\B})}$ (resp. $\Norm{x}_{C_E} := \Norm{Cx}_{E(\M_{\B})}$) and $R_E$ (resp. $C_E$) its completion. To lighten the notations, for $p\in (0,\infty]$, we write $\Hs_p := \Hs_{L_p}$ ($C_p = C_{L_p}$ and $R_p = R_{L_p}$).

Note that $R_p$ and $C_p$ are continuously included in $L_0(\M)^{\Nb}$. Therefore, the spaces $R_p + C_p$ and $R_p \cap C_p$ are well-defined. They both contain $S(\M)$ as a dense subspace (weak-$*$ dense for $p=\infty$). 

\begin{remark}\label{rem:HrinGr}
Let $p,q\in (0,\infty]$, the couple $(\Hs_p, \Hs_q)$ is a compatible couple of Banach spaces in the sense of interpolation. Indeed, for all $r>0$, $\Hs_r$ can be identified with a closed subspace of $L_r(\M_{\A})$ by extending the map $G$. Similarly, $R_r$ and $C_r$ are identified with closed subspaces of $L_r(\M_{\B})$, making the couples $(R_p,R_q)$ and $(C_p,C_q)$ compatible.
\end{remark}

With this notations, the first noncommutative Khintchine inequalities state that, assuming that $\xi$ is a sequence of independent Rademacher variables, 
$$
\Hs_p = \left\{
    \begin{array}{ll}
        R_p + C_p  & \mbox{if } p \in [1,2] \\
        R_p \cap C_p & \mbox{if } p \in [2,\infty) \\
    \end{array}
\right.
$$
with equivalent norms.

The main results of sections \ref{section:projection} and \ref{Section alpha} come from the study of optimal decompositions. Let us give their definition right away.
\begin{definition}
For any $x\in S(\M)$ and $p \in (0,1]$. Define 
$$m_{p}(x) = \inf\{\Norm{Ry}_p^p + \Norm{Cz}_p^p : x = z +y, y,z\in S(\M)\}.$$ 
We say that $y,z\in S(\M)$ is an optimal decomposition of $x$ in $L_p$ if $y+z = x$ and $\Norm{Ry}_p^p + \Norm{Cz}_p^p = m_p(x)$.
\end{definition}

The intuition behind this definition is that distributions of optimal decompositions of $x$ should somehow approach the distribution of $Gx$. Though very vague, this idea is partially confirmed by the theorems stated in the next subsection. 

\subsection{Overview of the results}

Recall that $(\M,\tau)$ is a noncommutative integration space and consider a (possibly) noncommutative probability space $\A$ and a sequence $(\xi_i)_{i\geq 0} \in \A^{\Nb}$. To facilitate the reading of this section, we only use standard notations from the litterature. 

For two quantities $A(x)$ and $B(x)$, we will write $A(x) \les B(x)$ or $A(x) \les_C B(x)$, if there is a universal constant $C$ such that for all $x$, $A(x) \leq CB(x)$ and similarly $A(x) \approx B(x)$ or $A(x) \approx_C B(x)$ if there exists $C$ such that for all $x$, $\frac{1}{C}B(x) \leq A(x) \leq CB(x)$.

Our first result is a negative one, we exhibit counterexamples to prove the following proposition:

\begin{property} \label{prop:counterexample}
Suppose that the $\xi_i$ are free Haar unitaries or Rademacher variables and that $\M = B(\ell^2)$.
There is no constant $c$ such that for every finite sequence $x = (x_n)_{n\geq 0} \in \M^{\Nb}$:
$$\Norm{\Sum{i\geq 0}{} x_i \otimes \xi_i}_{2,\infty} \leq c \Norm{x}_{R_{2,\infty}+C_{2,\infty}} . $$
Similarly, there is no constant $c$ such that for every finite sequence $x = (x_n)_{n\geq 0} \in \M^{\Nb}$:
$$\Norm{x}_{R_{2,\infty} \cap C_{2,\infty}} \leq c \Norm{\Sum{i\geq 0}{} x_i \otimes \xi_i}_{2,\infty}. $$
\end{property}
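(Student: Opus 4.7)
The plan is to disprove each of the two inequalities by producing an explicit one-parameter family of sequences $(x_i^{(N)})$ in $\M = \B(\ell^2)$ such that the relevant ratio tends to infinity as $N\to\infty$.

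For the first inequality, I would take $x_i = i^{-1/2}\, e_{i,1}$ for $i = 1,\ldots,N$. Direct computation gives $(Rx)(Rx)^* = \sum_i (1/i)\, e_{i,i}\otimes e_{1,1}$, so $|Rx^*|$ is diagonal with singular values $i^{-1/2}$ and $\Norm{Rx}_{2,\infty} = \sup_k k^{1/2}\cdot k^{-1/2} = 1$. Choosing $y = x$, $z = 0$ in the infimum defining the sum quasi-norm then yields $\Norm{x}_{R_{2,\infty}+C_{2,\infty}} \les 1$. On the other hand, using $\xi_i^*\xi_i = 1$ (which holds both for free Haar unitaries and for Rademacher $\pm 1$ variables) and $e_{1,i}e_{j,1} = \delta_{ij}\,e_{1,1}$, one finds $(Gx)^*(Gx) = \sum_i (1/i)\, e_{1,1}\otimes 1 = H_N\, e_{1,1}\otimes 1$, with $H_N := \sum_{i=1}^N 1/i$; hence $|Gx|$ is the multiple $\sqrt{H_N}\,(e_{1,1}\otimes 1)$ of a projection of trace $1$ in $\M\otimest\A$, so $\Norm{Gx}_{2,\infty} = \sqrt{H_N}\sim\sqrt{\log N}$. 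The unboundedness of the ratio precludes any finite constant $c$.

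For the second inequality, one needs a different sequence, since the example above in fact satisfies $\Norm{Gx}_{2,\infty} = \Norm{x}_{R_{2,\infty}\cap C_{2,\infty}}$. The goal is to find $x$ for which $|Gx|$ has a broadly spread spectrum, making $\Norm{Gx}_{2,\infty}$ small compared to $\Norm{Gx}_2$, while $|Rx^*|$ or $|Cx|$ concentrates on a support of small trace, making its $L_{2,\infty}$-quasi-norm close to its $L_2$-norm. A natural candidate is a direct sum $x = \bigoplus_{k=1}^M y^{(k)}$ of carefully rescaled copies of the first building block supported on disjoint blocks of $\B(\ell^2)$, so that the mass of $|Gx|$ is redistributed uniformly across the $M$ blocks via a diagonal unitary rotation of the $\xi_i$, whereas $|Rx^*|^2 = \bigoplus_k |Ry^{(k)*}|^2$ stays concentrated within each block.

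The principal obstacle, which I expect to be the technical heart of the counterexample section, is the identity $\tau(|Gx|^2) = \tau(|Rx^*|^2) = \tau(|Cx|^2) = \sum_i \Norm{x_i}_2^2$ that holds for every sequence: only the \emph{shape} of the three spectra, not their total $L_2$-mass, can separate the two $L_{2,\infty}$-quasi-norms. Producing the required asymmetry therefore demands a specific algebraic rigidity for $\sum x_i \otimes \xi_i$; I expect the verification to rely on a moment or operator-valued free-probability calculation that uses only $\xi_i^*\xi_i = 1$ and the $L_2$-orthogonality of distinct $\xi_i$, features common to both Rademacher and free Haar families.
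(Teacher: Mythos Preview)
Your treatment of the first inequality is correct and in fact more explicit than the paper's: you write down the sequence $x_i = i^{-1/2} e_{i,1}$ directly, whereas the paper derives both counterexamples from a single lemma based on the Schur--Horn theorem. Your example is precisely the rank-one instance of that general construction.

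For the second inequality there is a genuine gap: you have only a heuristic, not a construction. The direct-sum idea runs into an immediate obstruction. For any block $y$ of the form $y_i = c_i\, e_{i,1}$ one has $(Gy)^*(Gy) = (Cy)^*(Cy)\otimes 1$, so $\mu(Gy) = \mu(Cy)$, and this persists under direct sums over disjoint coordinate blocks with disjoint $\xi$-indices. Hence $\Norm{Gx}_{2,\infty} = \Norm{Cx}_{2,\infty}$ for every such $x$, and one would need $\Norm{Rx}_{2,\infty}\gg\Norm{Cx}_{2,\infty}$. But in each block the spectrum of $|(Ry^{(k)})^*|^2$ is the \emph{spread} sequence $(c_k^2/i)_{i\le N_k}$ while $|Cy^{(k)}|^2$ is the single atom $c_k^2 H_{N_k}$; the natural rescalings (equal blocks, or $c_k=k^{-1/2}$) all give a ratio bounded by $1$, and a ``diagonal unitary rotation of the $\xi_i$'' only multiplies each $x_i$ by a unimodular scalar, changing nothing. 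You have not exhibited any choice of scalings that reverses this, and it is far from clear one exists.

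The paper's mechanism is different and needs no free-probability moment computation. One sets $x_i = e_{i,i}\,M^{1/2}$ for a positive matrix $M$; then, using only $\xi_i^*\xi_i=1$, one gets $|Gx|^2 = M\otimes 1$, $|Cx|^2 = M\otimes e_{1,1}$ and $|(Rx)^*|^2 = \mathrm{Diag}(M)\otimes e_{1,1}$. Thus $\mu(Gx)$ is determined by the \emph{eigenvalues} of $M$ while $\mu(Rx)$ is determined by its \emph{diagonal}, and the Schur--Horn theorem lets one prescribe these independently subject only to majorisation. Choosing eigenvalues $\lfloor N/i\rfloor$ (so $\Norm{Gx}_{2,\infty}\approx\sqrt{N}$) and constant diagonal equal to $1$ on a support of size $v_N=\sum_{i\le N}\lfloor N/i\rfloor\approx N\ln N$ (so $\Norm{Rx}_{2,\infty}=\sqrt{v_N}$) gives the second counterexample. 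This is exactly the ``spread $|Gx|$, concentrated $|Rx^*|$'' picture you were after, but produced by the eigenvalue/diagonal dichotomy rather than a block decomposition.
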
 

The counterexamples are constructed using the Schur-Horn theorem. As mentioned in remark \ref{rem:counterexamplegeneral}, the method can be used to prove more general results (see \cite{Cad18Maj}) .

The next theorem has the advantage of requiring very little conditions on the variables considered. We prove that if the lower Khintchine inequality holds for some $p$, it also holds for $q>p$ with the same decomposition.

\begin{theorem} \label{thm:intro:lower}
Let $p \in (0,1]$. Suppose that:
\begin{itemize}
\item the $\xi_i$ are orthonormal in $L_2(\A)$,
\item the $\xi_i$ verify the lower bound of the Khintchine inequality in $L_p$ i.e for any finite sequence $x = (x_i)_{i\geq 0} \in \M_c^{\Nb}$:
$$\Norm{\Sum{i\geq 0}{} x_i \otimes \xi_i}_p \gtrsim_A \Norm{x}_{R_p +C_p}. $$
\end{itemize}
Then there is a decomposition $y,z$ such that $x = y + z$ and for all interpolation space $E$ between $L_p$ and $L_{\infty}$:
$$ \Norm{\p{\Sum{i\geq 0}{}y_iy_i^*}^{1/2}}_{E(\M)} + \Norm{\p{\Sum{i\geq 0}{} z_i^*z_i}^{1/2}}_{E(\M)} \les_c \Norm{\Sum{i\geq 0}{} x_i \otimes \xi_i}_{E(\M_{\A})}. $$
The constant $c$ only depends on $A$ and $p$. 
\end{theorem}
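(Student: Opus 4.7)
The strategy is to reduce to a $K$-functional comparison and then invoke Proposition~\ref{prop:kfuncandinterpolation}. Concretely, if we exhibit $y,z$ with $x=y+z$ such that for every $t>0$,
\begin{equation*}
K_t\p{Ry,\; L_p(\M_{\B}),\; L_{\infty}(\M_{\B})} + K_t\p{Cz,\; L_p(\M_{\B}),\; L_{\infty}(\M_{\B})} \les K_t\p{Gx,\; L_p(\M_{\A}),\; L_{\infty}(\M_{\A})},
\end{equation*}
then Proposition~\ref{prop:kfuncandinterpolation}, applied separately to $Ry$ and to $Cz$, transfers each bound to every interpolation space $E$ between $L_p$ and $L_{\infty}$ and yields $\Norm{Ry}_{E(\M_{\B})}+\Norm{Cz}_{E(\M_{\B})} \les \Norm{Gx}_{E(\M_{\A})}$, which is the conclusion.

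The natural candidate is $y,z$ optimal in $L_p$; the lower Khintchine hypothesis gives $m_p(x)\les \Norm{Gx}_p^p<\infty$, so such a decomposition exists (at least up to any fixed precision). Set $A=\p{\Sum{i}{}y_iy_i^*}^{1/2}$ and $B=\p{\Sum{i}{}z_i^*z_i}^{1/2}$. For the upper bound on the left-hand side I use spectral truncation: for $t>0$ and $e_t=\Ind_{[\mu_{t^p}(A),\infty)}(A)\in\M$, commutativity of $e_t$ with $A$ yields $\md{R(e_ty)^*}=Ae_t$ and $\md{R((1-e_t)y)^*}=A(1-e_t)$, hence
\begin{equation*}
\Norm{R(e_ty)}_p^p\approx \Int{0}{t^p}\mu_s(A)^p\, ds,\quad \Norm{R((1-e_t)y)}_{\infty}\leq \mu_{t^p}(A),
\end{equation*}
so that $K_t(Ry,L_p,L_{\infty})\les \p{\Int_0^{t^p}\mu_s(A)^p\, ds}^{1/p}$. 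The analogous bound for $Cz$ uses a spectral cutoff of $B$. Since $p\leq 1$ makes $r\mapsto r^{1/p}$ superadditive, these combine into
\begin{equation*}
K_t(Ry,L_p,L_{\infty}) + K_t(Cz,L_p,L_{\infty}) \les \p{\Int{0}{t^p}\p{\mu_s(A)^p+\mu_s(B)^p}\, ds}^{1/p}.
\end{equation*}

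Combined with $K_t(Gx,L_p,L_{\infty})^p\approx \Int_0^{t^p}\mu_s(Gx)^p\, ds$ (valid for $p\leq 1$ by \cite{Cad18Maj}), the theorem reduces to the localised estimate
\begin{equation*}
\Int{0}{t^p}\p{\mu_s(A)^p+\mu_s(B)^p}\, ds \les \Int{0}{t^p}\mu_s(Gx)^p\, ds. \quad (\ast)
\end{equation*}
This is the heart of the argument. My plan for $(\ast)$ is to apply the lower Khintchine in $L_p$ not to $x$ itself but to a suitable truncation $\tilde x$ of $x$ depending on $t$. One builds $\tilde x$ so that, on the one hand, $\Norm{G\tilde x}_p^p\les \Int_0^{t^p}\mu_s(Gx)^p\, ds$ (i.e.\ $G\tilde x$ only sees the top-$t^p$ portion of the singular values of $Gx$), and on the other hand, the optimality of $y,z$ forces $m_p(\tilde x)\gtrsim \Int_0^{t^p}(\mu_s(A)^p+\mu_s(B)^p)\, ds$. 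Applying the lower Khintchine to $\tilde x$ then closes the loop and yields $(\ast)$.

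The main obstacle is constructing such a $\tilde x$. The truncation must translate cleanly from $\M_{\A}$, where we want to see only the top singular values of $Gx$, to $\M$, where it must be compatible with the optimal decomposition. This calls for a projection in $\M$ (or a pair of left/right projections) adapted simultaneously to the spectral data of $Gx$ and to the structure of $y,z$; its construction is where the orthonormality of the $\xi_i$ enters essentially, being the only mechanism for transporting spectral information of $Gx$ down to $\M$ (e.g.\ through the partial expectation $(\mathrm{id}\otimes\tau_{\A})$, which under the choices $z=0$ and $y=0$ returns $\Sum{}{}x_ix_i^*$ and $\Sum{}{}x_i^*x_i$ respectively).
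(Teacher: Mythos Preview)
Your reduction to a $K$-functional comparison and the appeal to Proposition~\ref{prop:kfuncandinterpolation} are exactly right, and the formula $K_t(Ry,p,\infty)^p\approx\int_0^{t^p}\mu_s(A)^p\,ds$ (and likewise for $Cz$) is the correct intermediate target. But the proof stops precisely where the content is: you do not prove $(\ast)$, you only describe properties a hypothetical truncation $\tilde x$ should have. As written this is a genuine gap.

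More importantly, the direction you suggest for building $\tilde x$ is off-track. You look for a projection ``adapted simultaneously to the spectral data of $Gx$ and to the structure of $y,z$'', and you expect orthonormality of the $\xi_i$ to transport spectral information of $Gx$ down to $\M$. Neither is needed. The paper's key observation is that one should take a projection $e\in\M$ which is spectral for $\md{(Ry)^*}$ \emph{only}, with $\tau(e)\leq t^p$, and set $\tilde x=ex$. Two things then happen independently:
\begin{itemize}
\item Because $e$ has small trace (and nothing more), Proposition~\ref{prop:approx:eKt} gives $\Norm{G(ex)}_p=\Norm{e\,Gx}_p\les K_t(Gx,p,\infty)$. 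No adaptation of $e$ to $Gx$ is required; orthonormality plays no role here.
\item Because $e$ commutes with $\md{(Ry)^*}$, one has $\Norm{R(ey)}_p^p=\int_0^{t^p}\mu_s(A)^p\,ds$. The nontrivial step is then to show $\Norm{R(ey)}_p^p\leq m_p(ex)$ (up to arbitrarily small error). This is where \emph{optimality} of $(y,z)$ is actually used: comparing the optimal decomposition of $x$ with near-optimal decompositions of $ex$ and $(1-e)x$, and using the $p$-triangle inequality together with $\Norm{Ry}_p^p=\Norm{R(ey)}_p^p+\Norm{R((1-e)y)}_p^p$, one cancels the $(1-e)$-terms and is left with $\Norm{R(ey)}_p^p\leq m_p(ex)$. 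The lower Khintchine hypothesis applied to $ex$ then closes the loop.
\end{itemize}
The column part is obtained by the symmetric argument (projections spectral for $\md{Cz}$), not simultaneously. Your attempt to package both into a single $\tilde x$ is what makes the construction look hard.

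A secondary point: ``optimal up to any fixed precision'' is not quite enough on its own. If one works with $\eta$-near-optimal $(y,z)$, the argument above yields $K_t(Ry,p,\infty)^p\les K_t(Gx,p,\infty)^p+\eta^p$, and one must still remove the additive $\eta^p$ uniformly in $t$. The paper does this by first arranging (at no extra cost in $L_p$) that $\Norm{Ry}_\infty,\Norm{Cz}_\infty\les\Norm{Gx}_\infty$, and then choosing $\eta$ comparable to $K_\delta(Gx,p,\infty)$ for a suitable threshold $\delta$; for $t<\delta$ the $L_\infty$ control handles the bound directly. This step is where orthonormality of the $\xi_i$ actually enters (to get $\Norm{Rx}_\infty\leq\Norm{Gx}_\infty$).
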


The proof is a dual version of the argument used in \cite{RicardDirksen}. We use the Khintchine inequality in $L_p$ not only for an element $x\in S(\M)$ but also for elements of the form $ex$ where $e$ is a well chosen projection in $\M$. This enables us to obtain a control in terms of $K$-functionals which, by proposition \ref{prop:kfuncandinterpolation}, immediatly implies the theorem above as a corollary. For this idea to work the decomposition $y,z$ has to be close enough to an optimal decomposition.

\begin{remark} \label{rem:Kh(p)_indep_of_p}
The second condition on $\xi$ is actually independent of $p$ for $p<2$. This is a consequence of \cite{PisierRicard2017} (to go from $p$ to $q<p$) and theorem \ref{Main_1} (to go from $p$ to $q>p$). This means that applying the theorem above we can prove Khintchine inequalities in $L_{1,\infty}$ for any sequence of variables $\xi_i$ that verifies Khintchine inequalities in $L_p$ for some $p<2$. More details are given in Corollary \ref{cor:weak1khintchine}. 

Note also that by remark \ref{rem:1<p<2}, theorem \ref{thm:intro:lower} also holds for $p\in (1,2)$.
\end{remark}

The second theorem summarizes the results found in section $\ref{Section alpha}$. By pushing further the properties of optimal decompositions in $L_1$ and together with the key inequality due to Ricard (\cite{Ric18}) we obtain a new proof of Khintchine inequalities in all interpolation spaces between $L_p$-spaces if the variables are for example free unitaries. We also have a "deterministic" equivalent of $\Norm{\Sum{i\geq 0}{} x_i \otimes \xi_i}_{2,\infty}$ which is however less explicit than the usual Khintchine inequalities.  

\begin{theorem} \label{thm:intro:alpha}
Let $x = (x_i)_{i\geq 0} \in S(\M)$. There exist $\alpha,\beta \in \M_c^+$ and $u = (u_i)_{i\geq 0} \in S(\M)$ such that:
\begin{itemize}
\item $s(\alpha) \leq \Sum{i\geq 0}{} u_iu_i^* \leq 1$,
\item $s(\beta) \leq \Sum{i\geq 0}{} u_i^*u_i \leq 1$,
\item for all $i\geq 0$, $x_i = u_i \beta + \alpha u_i$.
\end{itemize}
Furthermore, suppose that:
\begin{itemize}
\item the $\xi_i$ are orthonormal in $L_2(\A)$
\item the $\xi_i$ verify the Khintchine inequality in $L_{\infty}$ i.e for any finite sequence $x = (x_i)_{i\geq 0}\in \M^{\Nb}$:
\[
\Norm{x}_{R_{\infty}\cap C_{\infty}} \approx_{c_{\infty}} \Norm{\Sum{i\geq 0}{} x_i \otimes \xi_i}_{\infty}. \addtag\label{eq:Khinfty}
\]
\end{itemize}
Then for all $p\in (0,\infty)$ and $E$ an interpolation space between $L_p$ and $L_{\infty}$:
$$\Norm{\alpha}_{E(\M)} + \Norm{\beta}_{E(\M)} \approx_c \Norm{\Sum{i\geq 0}{} x_i \otimes \xi_i}_{E(\M_{\A})},$$
where $c$ only depends on $p$ and the constant $c_{\infty}$ appearing in \eqref{eq:Khinfty}.

\end{theorem}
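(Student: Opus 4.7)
The proof divides into an existence/factorization step and a norm equivalence step.

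For the factorization, the plan is to extract $(\alpha, \beta, u)$ from an optimal decomposition of $x$ in $L_1$. Choose $(y, z) \in S(\M)^2$ minimizing $\Norm{Ry}_1 + \Norm{Cz}_1$ under the constraint $y + z = x$; existence follows by compactness inside a common finite reduction $e\M e$. Set $\alpha := (\sum_i y_i y_i^*)^{1/2}$ and $\beta := (\sum_i z_i^* z_i)^{1/2}$. Polar decomposition yields contractions $u_i^{(1)}, u_i^{(2)}$ with $y_i = \alpha u_i^{(1)}$ and $z_i = u_i^{(2)} \beta$, together with the support identities $\sum_i u_i^{(1)} (u_i^{(1)})^* = s(\alpha)$ and $\sum_i (u_i^{(2)})^* u_i^{(2)} = s(\beta)$. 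The crucial point is that the optimality of $(y, z)$ forces $u^{(1)} = u^{(2)}$: differentiating the objective along the admissible variation $(y, z) \mapsto (y + \varepsilon h, z - \varepsilon h)$ via the trace-derivative identity $\frac{d}{d\varepsilon}\tau(\sqrt{A + \varepsilon B})|_{\varepsilon = 0} = \tfrac{1}{2}\tau(A^{-1/2}B)$ (on $s(A)$), the stationarity condition reduces to $\mathrm{Re}\,\tau(\sum_i (u_i^{(1)} - u_i^{(2)}) h_i^*) = 0$ for every perturbation $h$, whence $u_i^{(1)} = u_i^{(2)} =: u_i$ and the factorization $x_i = \alpha u_i + u_i \beta$ follows. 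Non-invertibility of $\alpha$ or $\beta$ is handled by the usual passage to the support or a small regularization $\alpha \mapsto \alpha + \delta$.

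Define $w := Gu = \sum_i u_i \otimes \xi_i$ so that $Gx = (\alpha \otimes 1) w + w (\beta \otimes 1)$. The upper part of the Khintchine bound \eqref{eq:Khinfty} combined with $\Norm{Ru}_\infty, \Norm{Cu}_\infty \leq 1$ yields $\Norm{w}_\infty \leq c_\infty$. In view of Proposition~\ref{prop:kfuncandinterpolation}, it suffices to prove
\[
K_t(Gx, L_p(\M_\A), L_\infty(\M_\A)) \approx K_t(\alpha, L_p(\M), L_\infty(\M)) + K_t(\beta, L_p(\M), L_\infty(\M))
\]
with constants depending only on $p$ and $c_\infty$, and the norm equivalence in every interpolation space $E$ will follow. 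The upper estimate on $K_t(Gx)$ is routine: from decompositions $\alpha = a_1 + a_2$ and $\beta = b_1 + b_2$ in $L_p + L_\infty$, the splitting $Gx = (a_1 w + w b_1) + (a_2 w + w b_2)$ combined with the Hölder-type bounds $\Norm{c w}_r, \Norm{w c}_r \leq c_\infty \Norm{c}_r$ (valid on every $L_r$) yields the upper bound after taking infima.

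The main obstacle is the reverse inequality $K_t(\alpha) + K_t(\beta) \lesssim K_t(Gx)$, which requires one to disentangle $\alpha$ from $\beta$ in the mixed expression $\alpha w + w \beta$. The plan is to invoke Ricard's Hölder inequality for anti-commutators from \cite{Ric18}, the crucial ingredient being that $w$ is sufficiently generic --- a property supplied precisely by the Khintchine $L_\infty$ hypothesis, which plays the role of a Haagerup-type inequality for $(u, \xi)$. Given any splitting $Gx = A + B$ with $A \in L_p(\M_\A)$ and $B \in L_\infty(\M_\A)$, one uses Ricard's inequality to produce compatible decompositions of $\alpha$ and of $\beta$ separately, with controlled norms; this requires a careful application matching the spectral scale $t$, since Ricard's inequality is naturally stated at the level of full $L_p$-norms rather than $K$-functionals. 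Together with the upper estimate, this yields the pointwise equivalence of $K$-functionals, and Proposition~\ref{prop:kfuncandinterpolation} packages it into the desired norm equivalence in $E(\M)$.
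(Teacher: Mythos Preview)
Your factorization argument via first-variation is a plausible alternative to the paper's approach, but it is different and your treatment of non-invertibility is too casual. The paper does \emph{not} differentiate the functional; instead it takes a norming element $u$ for $x$ in $(R_1+C_1)^* = R_\infty\cap C_\infty$, so that $\tau(\sum u_i^*x_i)=\|Ry\|_1+\|Cz\|_1$, and then observes that equality in Cauchy--Schwarz forces $s(\alpha)u$ to coincide with the polar factor of $Ry$ (and symmetrically for $\beta$). This sidesteps the differentiability issue entirely. In your approach, when $\alpha$ (or $\beta$) has nontrivial kernel the map $\varepsilon\mapsto\|R(y+\varepsilon h)\|_1$ is not differentiable at $0$ for general $h$: the block of $|R(y+\varepsilon h)^*|^2$ living under $1-s(\alpha)$ is of order $\varepsilon^2$, so its square root contributes $|\varepsilon|$, not $\varepsilon$, to the trace. ``Regularize by $\alpha\mapsto\alpha+\delta$'' does not obviously fix this, since the regularized problem has a different minimizer. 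A correct version would use one-sided derivatives and convexity, but that is not what you wrote.

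The genuine gap is in the lower $K$-functional estimate. You correctly flag Ricard's anticommutator inequality as the key input, but ``given a splitting $Gx=A+B$, use Ricard's inequality to produce compatible decompositions of $\alpha$ and $\beta$'' is not how that inequality operates, and you never say how to separate $\alpha$ from $\beta$. The paper's mechanism is concrete and you are missing it: Ricard's inequality (in its $K$-functional form) gives
\[
K_{t^p}\bigl(\alpha^p w + w\beta^p,\,1,\infty\bigr)\ \lesssim\ K_t(Gx,p,\infty)^p\,\|w\|_\infty^{1-p},
\]
passing from exponent $1$ on $\alpha,\beta$ to exponent $p$. One then multiplies on the right by $w^*$ (bounded since $\|w\|_\infty\le c_\infty$) and applies the conditional expectation $\mathrm{id}\otimes\tau_{\A}$, which is contractive on $L_1$ and $L_\infty$. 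Orthonormality of the $\xi_i$ collapses this to $\alpha^p\bigl(\sum_i u_iu_i^*\bigr)+\sum_i u_i\beta^p u_i^*$, and the support condition $s(\alpha)\le\sum_i u_iu_i^*\le 1$ forces $\alpha^p\bigl(\sum_i u_iu_i^*\bigr)=\alpha^p$. Since the second term is positive, the whole expression dominates $\alpha^p$, and the power theorem converts $K_{t^p}(\alpha^p,1,\infty)$ back to $K_t(\alpha,p,\infty)^p$. The estimate for $\beta$ is symmetric (multiply by $w^*$ on the left). This multiplication-plus-conditional-expectation trick is the heart of the lower bound, and it is absent from your outline.
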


Apart from the inequality found in \cite{Ric18}, the proof is based on a short duality argument.

\subsection{More on the $K$-functional} \label{section:kfunc}

In the context of noncommutative integration, the $K$-functional does not depend, up to universal constants, on the von Neumann algebra in which it is calculated. This result was proved by Xu in his unpublished lecture notes (\cite{Xu_notes}). We will give an alternative proof here using the following proposition which we will also need in the remainder of the paper. It is a version of the power theorem (see \cite{BerghLofstrom}) in the particular case of noncommutative $L_p$-spaces.

\begin{property}\label{prop:powertheorem}
Let $\alpha > 0$, $x\in (L_p(\M)+L_q(\M))^+$ and $p,q\in (0,\infty]$ then:
$$K_t(x,L_p(\M),L_q(\M)) \leq c_{p,\alpha}\bbox{K_{t^{1/\alpha}}(x^{1/\alpha},L_{p\alpha}(\M),L_{q\alpha}(\M))}^{\alpha}, $$
where $c_{p,\alpha} = \max (1,2^{1/p\alpha - 1})\max (2^{\alpha-1},2^{1 - \alpha})$.
\end{property}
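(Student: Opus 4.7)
The plan is to prove the bound by constructing, from a near-optimal decomposition of $x^{1/\alpha}$, an explicit decomposition of $x$ via spectral truncation of $x$ itself. Fix $\epsilon > 0$ and choose $u \in L_{p\alpha}(\M)$, $v \in L_{q\alpha}(\M)$ with $x^{1/\alpha} = u + v$ and $\|u\|_{p\alpha} + t^{1/\alpha}\|v\|_{q\alpha} \leq (1+\epsilon)\,K_{t^{1/\alpha}}(x^{1/\alpha},L_{p\alpha}(\M),L_{q\alpha}(\M))$. A preliminary step reduces to the case $u,v \geq 0$, using that $x \geq 0$; this costs only a universal constant.

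The key observation is the perfect compatibility between spectral truncations of $x$ and of $x^{1/\alpha}$. For $s > 0$, set $e_s = \Ind_{[0,s]}(x) = \Ind_{[0,s^{1/\alpha}]}(x^{1/\alpha})$. Since $e_s$ commutes with every Borel function of $x$,
\[
\|x e_s\|_p^p = \tau(x^p e_s) = \tau\bigl((x^{1/\alpha})^{p\alpha} e_s\bigr) = \|x^{1/\alpha} e_s\|_{p\alpha}^{p\alpha},
\]
so that $\|xe_s\|_p = \|x^{1/\alpha} e_s\|_{p\alpha}^{\alpha}$, and similarly $\|x(1-e_s)\|_q = \|x^{1/\alpha}(1 - e_s)\|_{q\alpha}^{\alpha}$. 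Taking $y = xe_s$, $z = x(1-e_s)$ as candidate decomposition of $x$ thus reduces the problem to showing $\|x^{1/\alpha} e_s\|_{p\alpha} \lesssim \|u\|_{p\alpha}$ and $\|x^{1/\alpha}(1-e_s)\|_{q\alpha} \lesssim \|v\|_{q\alpha}$ for a suitable choice of $s$.

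To conclude, I would apply the quasi-triangle inequality $\|a+b\|_{p\alpha} \leq \max(1, 2^{1/(p\alpha)-1})(\|a\|_{p\alpha} + \|b\|_{p\alpha})$, which accounts for the factor $\max(1, 2^{1/(p\alpha)-1})$ in $c_{p,\alpha}$, followed by the elementary scalar inequality $a^\alpha + b^\alpha \leq \max(2^{\alpha-1}, 2^{1-\alpha})(a+b)^\alpha$ (treating $\alpha \geq 1$ and $\alpha \leq 1$ separately) to pass from
\[
\|y\|_p + t\|z\|_q = \|x^{1/\alpha} e_s\|_{p\alpha}^{\alpha} + \bigl(t^{1/\alpha}\|x^{1/\alpha}(1-e_s)\|_{q\alpha}\bigr)^{\alpha}
\]
to $c_{p,\alpha}(\|u\|_{p\alpha} + t^{1/\alpha}\|v\|_{q\alpha})^{\alpha}$. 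Letting $\epsilon \to 0$ then finishes the proof.

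The main obstacle is the choice of the spectral level $s$ and the control of the cross terms $\|v e_s\|_{p\alpha}$ and $\|u(1-e_s)\|_{q\alpha}$: these must be absorbed into $\|u\|_{p\alpha}$ and $\|v\|_{q\alpha}$ respectively. In the commutative case this is Holmstedt's classical observation that $s$ should be comparable to a level set of $\mu(v)$. In the noncommutative setting $u$ and $v$ typically do not commute with the spectral projections of $x = (u+v)^\alpha$, so one would have to invoke the submajorization inequality $\mu_{s_1+s_2}(u+v) \leq \mu_{s_1}(u) + \mu_{s_2}(v)$ for generalized singular numbers, which compares the spectral distribution of $x^{1/\alpha}$ to those of $u$ and $v$ even without commutativity.
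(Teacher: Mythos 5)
Your setup is fine up to the point where the real work begins, and there it stops. The reduction to $u,v\geq 0$ and the identity $\Norm{xe_s}_p=\Norm{x^{1/\alpha}e_s}_{p\alpha}^{\alpha}$ are correct, but the claim that a level $s$ can be chosen so that $\Norm{x^{1/\alpha}e_s}_{p\alpha}\lesssim\Norm{u}_{p\alpha}$ and $\Norm{x^{1/\alpha}(1-e_s)}_{q\alpha}\lesssim\Norm{v}_{q\alpha}$ is the entire content of the argument, and you leave it as an acknowledged ``main obstacle'' rather than proving it. It is not a routine verification: it is a Holmstedt-type theorem asserting that spectral truncations of $u+v$ realize the $K$-functional of $(L_{p\alpha},L_{q\alpha})$, the level $s$ must depend on $t$ (a single $s$ valid for all $t$ does not exist), and the constants in Holmstedt's formula for a couple $(L_{p_0},L_{p_1})$ degenerate as $p_0\to p_1$, so this route cannot yield the constant $c_{p,\alpha}$ of the statement, which is independent of $q$. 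There is also an orientation error: with $p<q$ the part of $x^{1/\alpha}$ carrying the \emph{large} spectral values is the one that must be measured in $L_{p\alpha}$ and compared with $u$; as written ($e_s=\Ind_{[0,s]}(x)$ sent to $L_{p\alpha}$), the claim fails outright, e.g.\ for $u=0$ and $v=x^{1/\alpha}\in L_{q\alpha}\setminus L_{p\alpha}$, where $\Norm{x^{1/\alpha}e_s}_{p\alpha}=\infty$ for every $s$.

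The paper's proof is truncation-free and avoids all of this. Starting from the same near-optimal decomposition $x^{1/\alpha}=a+b$ with $a,b\geq 0$, it transports the decomposition directly through the power map using the operator inequalities of lemma \ref{lem:matrixineq}: $(a+b)^{\alpha}\leq 2^{\alpha-1}u(a^{\alpha}+b^{\alpha})u^{*}$ for $\alpha\geq 1$ and $(a+b)^{\theta}\leq ua^{\theta}u^{*}+vb^{\theta}v^{*}$ for $\theta\leq 1$, combined with the fact that $0\leq c\leq d$ forces $c=wdw^{*}$ for a contraction $w$. This produces an explicit decomposition $x=a'+b'$ with $a'$ (resp.\ $b'$) a contractive conjugate of a multiple of $a^{\alpha}$ (resp.\ $b^{\alpha}$), so the norm estimates follow immediately from unitary invariance plus the scalar inequality you already have, with no choice of $s$, no cross terms, and constants depending only on $p$ and $\alpha$. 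If you wish to pursue your route, you must actually prove the noncommutative Holmstedt estimate via the submajorization inequality you cite, and accept a weaker, $q$-dependent constant.
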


\begin{remark}
We will use this proposition under the following form. Let $\alpha > 0$, $x\in (L_p(\M)+L_q(\M))^+$ and $p,q\in (0,\infty]$ then:
$$K_t(x,L_p(\M),L_q(\M)) \approx \bbox{K_{t^{1/\alpha}}(x^{1/\alpha},L_{p\alpha}(\M),L_{q\alpha}(\M))}^{\alpha}, $$
\end{remark}

We use the following routine operator inequalities, proved in the last section of the paper.

\begin{lemma} \label{lem:matrixineq}
Let $a,b\in L_0(\M)^+$, $\alpha \geq 1$ and $\theta \leq 1$. 
\begin{enumerate}[label=\text{\roman*.}]
\item if $0 \leq a \leq b$ then there exists a contraction $c$ such that $a = cbc^*$,
\item if $0 \leq a \leq b$ then there exists a partial isometry $u$ such that $a^2 \leq ub^2u^*$,
\item there exists a partial isometry $u\in\M$ such that:
$$ (a + b)^{\alpha} \leq 2^{\alpha-1}u(a^{\alpha} + b^{\alpha})u^*, $$
\item there exist two partial unitaries $u$ and $v$ such that:
$$(a+b)^{\theta} \leq ua^{\theta}u^* + vb^{\theta}v^*.$$
\end{enumerate}
\end{lemma}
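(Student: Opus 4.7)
The overall strategy is to reduce each item to a comparison of generalized singular values, which can then be promoted to an operator inequality up to conjugation by a partial isometry via the following standard principle: if $x,y\in L_0(\M)^+$ satisfy $\mu(x)\leq\mu(y)$, then there exists a partial isometry $u\in\M$ with $x\leq uyu^*$. (The proof is a functional-calculus truncation: one finds a Borel function $\phi$ with $\phi(y)\leq y$ and $\mu(\phi(y))=\mu(x)$, and then uses the fact that positive $\tau$-measurable operators with the same singular value function are related by a partial isometry of $\M$.)

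Part (i) is the Douglas factorization lemma for $\tau$-measurable operators. Since $a\leq b$ forces $\|a^{1/2}\xi\|\leq\|b^{1/2}\xi\|$ on the natural domain, the assignment $b^{1/2}\xi\mapsto a^{1/2}\xi$ extends (by zero on the orthogonal complement of $\overline{\mathrm{ran}(b^{1/2})}$) to a contraction $c\in\M$ with $a^{1/2}=cb^{1/2}$; hence $a=cbc^*$. Part (ii) then follows by feeding (i) back in:
\[
a^2=c\,b(c^*c)b\,c^*\leq cb^2c^*,
\]
because $c^*c\leq 1$ forces $b(c^*c)b\leq b\cdot 1\cdot b=b^2$. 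Contractivity of $c$ gives $\mu_t(a^2)\leq\mu_t(cb^2c^*)\leq\mu_t(b^2)$ for all $t>0$, and the principle above supplies the required partial isometry.

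For parts (iii) and (iv), the plan is to combine the scalar inequalities $(s+t)^{\alpha}\leq 2^{\alpha-1}(s^{\alpha}+t^{\alpha})$ (convexity of $x^{\alpha}$ for $\alpha\geq 1$) and $(s+t)^{\theta}\leq s^{\theta}+t^{\theta}$ (subadditivity of $x^{\theta}$ for $\theta\leq 1$) with the subadditivity $\mu_t(a+b)\leq\mu_{t/2}(a)+\mu_{t/2}(b)$ of the generalized singular value to produce the desired $\mu$-level bounds. The residual $\mu_{t/2}$ shifts are absorbed by passing through the amplified algebra $M_2(\M)$, where $\mathrm{diag}(a,b)$ realises the effect of a "two-fold copy" in the singular-value calculus and a partial isometry re-injects the relevant corner back into $\M$. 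In (iv) the two partial isometries $u$ and $v$ correspond to positioning $ua^{\theta}u^*$ and $vb^{\theta}v^*$ so that their sum witnesses the rearrangement of $\mu(a^{\theta})$ and $\mu(b^{\theta})$ demanded by the scalar bound.

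The main obstacle is precisely the promotion from $\mu$-level inequalities to operator inequalities with genuine partial isometries in $\M$, and—more subtly, for (iii)—preserving the sharp constant $2^{\alpha-1}$ while accommodating the $t/2$ shifts. Both points reduce to careful spectral-projection bookkeeping in the semifinite algebra: they are routine but require one to track distributions and traces of projections throughout the argument.
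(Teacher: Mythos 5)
Part (i) is correct and essentially the paper's argument (the paper regularises with $b+\e$ and takes a weak$^*$ limit of $a^{1/2}(b+\e)^{-1/2}$ rather than quoting Douglas's lemma). Everything else, however, rests on a ``standard principle'' that is false in the generality required here: it is not true that $\mu(x)\leq\mu(y)$ for $x,y\in L_0(\M)^+$ yields a partial isometry $u\in\M$ with $x\leq uyu^*$, nor that two positive operators with the same singular value function are conjugate by a partial isometry of $\M$. Take $\M=L_\infty(0,1)$: any partial isometry $u\in\M$ has $|u|^2=u^*u=uu^*$ equal to an indicator function, so $uyu^*=|u|^2y\leq y$ pointwise, and $x\leq uyu^*$ would force $x\leq y$; yet $x(t)=t$ and $y(t)=1-t$ satisfy $\mu(x)=\mu(y)$ while $x\not\leq y$. (The principle does hold in $B(\ell^2)$ by diagonalisation, but the lemma is used in this paper for arbitrary semifinite $\M$, including commutative ones, e.g.\ inside the proofs of the power theorem and of the independence of the $K$-functional from $\M$.) For (ii) the damage is repairable without the principle: you already have the operator inequality $a^2\leq cb^2c^*=(cb)(cb)^*$, and writing the polar decomposition $cb=w|cb|$ gives $cb^2c^*=w(bc^*cb)w^*\leq wb^2w^*$ since $c^*c\leq 1$; this is close to the paper's route, which instead bounds $a^2\leq a^{1/2}ba^{1/2}$ and uses the polar decomposition of $a^{1/2}b^{1/2}$.

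For (iii) and (iv) the gap is not repairable along your lines. An operator inequality such as $(a+b)^{\alpha}\leq 2^{\alpha-1}u(a^{\alpha}+b^{\alpha})u^*$ carries strictly more information than any family of singular-value inequalities, precisely because of the counterexample above: no amount of spectral-projection bookkeeping converts $\mu$-level bounds into operator bounds with partial isometries of $\M$. Even the $\mu$-level plan does not close: after $\mu_t(a+b)\leq\mu_{t/2}(a)+\mu_{t/2}(b)$ you would need something like $\mu_{t/2}(a^{\alpha})+\mu_{t/2}(b^{\alpha})\lesssim\mu_t(a^{\alpha}+b^{\alpha})$, which already fails for $b=0$, and replacing $a^{\alpha}+b^{\alpha}$ by $\mathrm{diag}(a^{\alpha},b^{\alpha})$ in $M_2(\M)$ changes the right-hand side to an operator whose distribution is not comparable to that of the sum (consider $a=b$ a projection). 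The paper's proof is genuinely operator-theoretic: for (iii) it invokes operator convexity of $t\mapsto t^{\alpha}$ on $[1,2]$ (so $u=1$ there) and then doubles the exponent inductively using (ii); for (iv) it writes $x=a+b$, factors $a^{1/2}=cx^{1/2}$ and $b^{1/2}=dx^{1/2}$ with $c^*c+d^*d=s(x)$, applies Jensen's operator inequality for the subunital positive map $z\mapsto czc^*$ together with operator concavity of $t\mapsto t^{\theta}$, and finally uses polar decompositions to exchange $x^{\theta/2}c^*cx^{\theta/2}$ with $cx^{\theta}c^*$. Some such convexity or concavity input at the operator level is indispensable; distributional estimates alone cannot supply it.
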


\begin{proof}[Proof of proposition \ref{prop:powertheorem}]
For all $r>0$, define $a_r := \max (1,2^{1/r - 1})$. Take $a\in L_{p\alpha}(\M)$ and $b\in L_{q\alpha}(\M)$ such that $a + b = x^{1/\alpha}$. We start by considering the case $\alpha <1$. We need to find $a'\in L_{p}$ and $b'\in L_{q}$ such that $a' + b' = x$ and:
\[
\Norm{a'}_{p} + t\Norm{b'}_{q} \leq a_{p\alpha}2^{1-\alpha}\p{\Norm{a}_{p\alpha} + t^{1/\alpha}\Norm{b}_{q\alpha}}^{\alpha}.
\]
Since $x$ is positive, we can suppose that $a$ and $b$ are positive. Indeed, first note that:
$$x = \dfrac{a + a^*}{2} + \dfrac{b + b^*}{2},$$
and:
$$\Norm{\dfrac{a + a^*}{2}}_{p\alpha} + t\Norm{\dfrac{b + b^*}{2}}_ {q\alpha} \leq a_{p\alpha} (\Norm{a}_{p\alpha} + t\Norm{b}_{q\alpha})$$
using the triangular inequality for $p\alpha \geq 1$ and the $p\alpha$-triangular inequality for $p\alpha \leq 1$. So we can suppose that $a$ and $b$ are selfadjoint and write $a = a_+ - a_-$ and $b = b_+ - b_-$ their decompositions into positive and negative parts. It follows that $x\leq a_+ + b_+$ so there exists a contraction $c$ such that $x = ca_+c^* + cb_+c^*$ which yields a better decomposition than $a,b$.

Using lemma $\ref{lem:matrixineq}$, we can find two contractions $u$ and $v$ such that:
$$x = ua^{\alpha}u^* + vb^{\alpha}v^* =: a' + b'.$$
And we have:
\begin{align*}
\Norm{a'}_{p} + t\Norm{b'}_{q} &\leq \Norm{a^{\alpha}}_p + \Norm{b^{\alpha}}_q \\
&= \Norm{a}_{p\alpha}^{\alpha} + t^{\alpha}\Norm{b}_{q\alpha}^{\alpha} \\
&\leq 2^{1-\alpha}\p{\Norm{a}_{p\alpha} + t\Norm{b}_{q\alpha}}^{\alpha}. 
\end{align*}
Taking the infimum over all decompositions $x^{1/\alpha} = a + b$, and recalling that we had to add a factor $a_{p\alpha}$ to consider only $a$ and $b$ positive, we obtain:
$$K_t(x,L_p(\M),L_q(\M)) \leq a_{p\alpha}2^{1-\alpha} \bbox{K_{t^{1/\alpha}}(x^{1/\alpha},L_{p\alpha}(\M),L_{q\alpha}(\M))}^{\alpha}.$$
Let us now consider $\alpha \geq 1$. Continue to assume $a$ and $b$ to be nonnegative and recall that we lose a constant $a_{p\alpha}$ by doing so. Using lemma $\ref{lem:matrixineq}$ there exists a contraction $u$ such that:
$$ x = 2^{\alpha - 1}u(a^{\alpha} + b^{\alpha})u^* = 2^{\alpha - 1}ua^{\alpha}u^* + 2^{\alpha - 1}ub^{\alpha}u^* =: a' + b'.$$
Now, we compute:
\begin{align*}
\Norm{a'}_p + t\Norm{b'}_q &\leq 2^{\alpha - 1}(\Norm{a}_{p\alpha}^{\alpha} + t\Norm{b}_{q\alpha}^{\alpha}) \\
&\leq 2^{\alpha - 1}(\Norm{a}_{p\alpha} + t^{1/\alpha}\Norm{b}_{q\alpha})^{\alpha}.
\end{align*}
Taking the infimum over all decompositions, the proof is complete.
\end{proof}

\begin{property} \label{prop:Kt_independant of M}
Let $x\in L_p(\M) + L_q(\M)$, $p,q\in\Rb$ such that $0<p<q$ and $t>0$:
$$K_t(\mu(x),L_p(\Rb^+),L_q(\Rb^+)) \approx_{c_p} K_t(x,L_p(\M),L_q(\M)),$$ 
where $c_p$ only depends on $p$.
\end{property}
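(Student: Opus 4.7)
The plan is to reduce first to the positive case via the polar decomposition $x=u|x|$: the $L_p$- and $L_q$-(quasi-)norms are invariant under multiplication by $u$ and $u^*$, so $K_t(x,L_p(\M),L_q(\M)) = K_t(|x|,L_p(\M),L_q(\M))$, while $\mu(x) = \mu(|x|)$. I would then prove the two inequalities separately.

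For $K_t(\mu(x),L_p(\Rb^+),L_q(\Rb^+)) \les K_t(x,L_p(\M),L_q(\M))$: given any decomposition $x=y+z$, I would use the standard singular-value inequality $\mu_r(x) \leq \mu_{r/2}(y) + \mu_{r/2}(z)$ to construct a commutative decomposition $\mu(x)=y'+z'$ by setting $y'(r) := \min\p{\mu_r(x),\,2\mu_{r/2}(y)}$ and $z'(r) := \mu_r(x) - y'(r) \geq 0$. By construction $y'(r) \leq 2\mu_{r/2}(y)$, which gives $\Norm{y'}_p \leq 2^{1+1/p}\Norm{y}_p$. On the set $\{z'>0\}$ the inequality $\mu_r(x) > 2\mu_{r/2}(y)$ combined with the singular-value inequality yields $z'(r) \leq \mu_{r/2}(z)$, hence $\Norm{z'}_q \leq 2^{1/q}\Norm{z}_q$. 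Taking the infimum over decompositions proves the inequality with constant depending only on $p$.

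For the reverse direction $K_t(x,L_p(\M),L_q(\M)) \les K_t(\mu(x),L_p(\Rb^+),L_q(\Rb^+))$: I would apply Proposition \ref{prop:powertheorem} with $\alpha=1/p$ together with the homogeneity $\mu(x^{1/\alpha}) = \mu(x)^{1/\alpha}$ to reduce to the Banach case $p\geq 1$. There, I would consider the abelian von Neumann subalgebra $W^*(x) \subset \M$ generated by the positive element $x$: any decomposition of $x$ in $W^*(x)$ remains a decomposition in $\M$, so $K_t(x,L_p(\M),L_q(\M)) \leq K_t(x,L_p(W^*(x)),L_q(W^*(x)))$. Identifying $W^*(x)$ with its restricted trace as a tracial $L_\infty(X,\nu)$ in which $x$ corresponds to a function $\tilde x$ with $\mu(\tilde x) = \mu(x)$, the right-hand side equals $K_t(\tilde x,L_p(X,\nu),L_q(X,\nu))$, and this is comparable to $K_t(\mu(x),L_p(\Rb^+),L_q(\Rb^+))$ by the classical commutative fact that the $K$-functional of a positive element depends (up to constants) only on its distribution.

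The main obstacle is justifying this last commutative comparison when $(X,\nu)$ is not diffuse. A clean approach is to combine two observations: the first direction, applied commutatively, converts any decomposition of $\tilde x$ into one of $\mu(x)$ with controlled norms; conversely, the truncation decomposition $\mu(x) = (\mu(x)-\lambda)_+ + \mu(x)\wedge\lambda$ is functorial, so its norms coincide with those of $\tilde x = (\tilde x - \lambda)_+ + \tilde x \wedge \lambda$, and a standard Holmstedt-type argument shows that these truncations saturate $K_t(\mu(x),L_p(\Rb^+),L_q(\Rb^+))$ up to constants. The power-theorem reduction then extends the result to $p<1$ with no further work.
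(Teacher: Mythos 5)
Your proposal is correct, but it follows a genuinely different route from the paper. The paper reduces to positive $x$, then (for $p>1$) writes $K_t(x,L_p(\M),L_q(\M))$ as a supremum over finite projections $e$ of $K_t(exe,\cdot,\cdot)$ via a duality lemma, obtains an \emph{exact equality} $K_t(exe,L_p(\M),L_q(\M))=K_t(\mu(exe),L_p(\Rb^+),L_q(\Rb^+))$ for each finite piece by using conditional expectations onto the abelian algebras generated by $exe$ and by $\mu(exe)$, and matches the two suprema with an approximation lemma; the case $p\leq 1$ is then handled by the power theorem. You instead prove the two inequalities directly: for $K_t(\mu(x))\les K_t(x)$ you transfer an arbitrary decomposition $x=y+z$ to a commutative one via the Fack--Kosaki subadditivity $\mu_r(x)\leq\mu_{r/2}(y)+\mu_{r/2}(z)$ (your construction of $y',z'$ checks out, with constant $2^{1+1/p}$), and for $K_t(x)\les K_t(\mu(x))$ you realize a near-optimal commutative decomposition inside $\M$ by functional calculus, since the truncations $(x-\lambda)_+$ and $x\wedge\lambda$ have the same singular values as the corresponding truncations of $\mu(x)$. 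This is more elementary and self-contained: it avoids duality entirely (hence no $p>1$ versus $p\leq1$ dichotomy for that step), and in fact your power-theorem reduction and the detour through $W^*(x)$ are both unnecessary, since the truncation argument works in $\M$ for all $0<p<q\leq\infty$. What the paper's route buys is the exact equality on finite pieces and two lemmas of independent use elsewhere; what your route requires is the Holmstedt-type fact that truncations nearly attain $K_t$ for the couple $(L_p,L_q)$ --- immediate from \eqref{eq:formuleKt} when $q=\infty$, classical (Holmstedt) but worth citing or proving when $q<\infty$, where you should also check that the resulting constant can be taken to depend on $p$ alone, as the statement asserts.
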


\begin{proof}[Proof of proposition \ref{prop:Kt_independant of M}]
We will rely on lemmas \ref{lem:K_t approx finie} and \ref{lem:tech_approx_mu_finite} whose proofs can be found in the last section of the paper. 

Suppose that $x$ is positive. We do not lose generality here since multiplying by a unitary does not change the $K$-functional. 

First, assume that $p>1$. Let $e\in \Pc_c(\M)$ be a finite projection. Let us prove the proposition for $y = exe$. To do so, we can work in the finite algebra $e\M e$ which clearly does not modify the $K$-functional of $y$. Denote by $\M_{y}$ the von Neumann algebra generated by $y\in\M$ which is abelian. There are two conditional expectations $E_1 : \M \to \M_{y}$ and $E_2 : L_{\infty}(0,\tau(e)) \to \M_{\mu(y)}$ and $\M_{y}$ is canonically isomorphic to $\M_{\mu(y)}$ by $y \mapsto \mu(y)$. Since conditional expectations extend to contractions on $L_p$, we have, 
\[
K_t(y,L_p(\M),L_q(\M)) = K_t(\mu(y),L_p(\Rb^+),L_q(\Rb^+)). 
\]

Let us consider $x$ once again. Combining the equality above with lemma \ref{lem:K_t approx finie}, we obtain:
\[
K_t(x,L_p(\M),L_q(\M)) = \supb{e\in\Pc_c(\M)} K_t(\mu(exe),L_p(0,\infty),L_q(0,\infty)) \addtag \label{eq:Kt_Mtosupmu}
\]
and 
\[
K_t(\mu(x),L_p(0,\infty),L_q(0,\infty)) = \supb{f\in\Pc_c(L_{\infty}(0,\infty))} K_t(\mu(f\mu(x)f),L_p(0,\infty),L_q(0,\infty)).
\addtag \label{eq:Kt_mutosupmu}
\]
Since for all $e\in \Pc(\M)$, $\mu(exe) \leq \mu(x)$ (see \cite{FackKosaki}): 
$$K_t(x,L_p(\M),L_q(\M)) \leq K_t(\mu(x),L_p(0,\infty),L_q(0,\infty)).$$
Conversely, by lemma \ref{lem:tech_approx_mu_finite}, for every projection $f\in \Pc_c(L_{\infty}(0,\infty))$ and $\e>0$, there exists a finite projection $e$ in $\M$, such that $\mu(exe) + \e \geq \mu(f\mu(x))$. This implies that
$$\supb{e\in\Pc_c(\M)} K_t(\mu(exe),L_p(0,\infty),L_q(0,\infty)) \geq \supb{f\in\Pc_c(L_{\infty}(0,\infty))} K_t(f\mu(x)f,L_p(0,\infty),L_q(0,\infty)),$$
and enables us to conclude using \eqref{eq:Kt_Mtosupmu} and \eqref{eq:Kt_mutosupmu}.

Now take $0<p \leq 1$ and $q>p$. The result follows from proposition $\ref{prop:powertheorem}$:
\begin{align*}
K_t(y,L_p(\M),L_q(\M)) &\approx \bbox{K_{t^{p/2}}(y^{p/2},L_2(\M),L_{2q/p}(\M)}^{2/p}\\
&= K_{t^{p/2}}(\mu(y^{p/2}),L_2(\Rb^+),L_{2q/p}(\Rb^+))^{2/p} \\
&\approx K_t(\mu(y),L_p(\Rb^+),L_q(\Rb^+)).
\end{align*}

\end{proof}

\begin{remark}
In \cite{Xu_notes}, the result above is obtained by first proving that the couple $(L_1(\M),L_{\infty}(\M))$ is, in interpolation language, a partial retract of the couple $(L_1(0,\infty),L_{\infty}(0,\infty))$.
\end{remark}

This enables us to define: $K_t(x,p,q) := K_t(\mu(x),L_p(\Rb^+),L_q(\Rb^+))$.

\begin{remark}\label{rem:KtRowCol}
The equivalence above also applies to the row and column spaces. More precisely, for $x\in S(\M)$ and $p,q >0$,
\begin{center}
$K_t(x,R_p,R_q) \approx K_t(Rx,p,q)$ and $K_t(x,C_p,C_q) \approx K_t(Cx,p,q)$. 
\end{center}
\end{remark}

\begin{proof}
Using the fact that for all $r>0$, $R_r$ and $C_r$ are complemented in $L_r(\M\overline{\otimes}\B(\ell^2))$ and proposition \ref{prop:Kt_independant of M},
$$K_t(x,R_p,R_q) \approx K_t(Rx,L_p(\M\overline{\otimes}\B(\ell^2)),L_q(\M\overline{\otimes}\B(\ell^2))) \approx K_t(Rx,p,q)$$
and similarly for columns.

\end{proof}

Recall the following formula for the particular case $q=\infty$ (see \citep{BerghLofstrom}). For all $p>0$, there exists $A_p \in \Rb^+$ (with $A_1 = 1$) such that:
\[
\p{\Int{0}{t^p} \mu_s(x)^p ds}^{1/p} \approx_{A_p} K_t(x,p,\infty), \addtag \label{eq:formuleKt}
\]
From this, we deduce the following expression for $K_t(x,p,\infty)$ in terms of a supremum over projections, which is the important result of this section.

\begin{property} \label{prop:approx:eKt}
Suppose that $\M$ is diffuse. Let $p>0$. Then for all $x\in\M_c$:
$$\sup \{ \Norm{ex}_p : \tau(e) \leq t^p, e\in \Pc(\M) \} \approx_{A_p} K_t(x,p,\infty).$$
\end{property}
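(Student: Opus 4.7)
The plan is to use \eqref{eq:formuleKt} to reduce the statement to the exact identity
$$\sup \{ \Norm{ex}_p : \tau(e) \leq t^p, e \in \Pc(\M) \} = \p{\Int{0}{t^p} \mu_s(x)^p ds}^{1/p},$$
and to prove this identity by matching upper and lower bounds. Both rely only on elementary properties of generalised singular numbers together with the diffuseness hypothesis.

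For the upper bound I would observe that for any $e \in \Pc(\M)$ with $\tau(e) \leq t^p$, the inequality $\mu_s(ex) \leq \mu_s(x)$ holds for all $s$ (since $\Norm{e}_\infty \leq 1$), while $\mu_s(ex) = \mu_s(x^*e) \leq \Norm{x}_\infty \mu_s(e)$ vanishes once $s \geq \tau(e)$. Integrating $\mu_s(ex)^p$ over $\Rb^+$ then yields $\Norm{ex}_p^p \leq \Int{0}{t^p} \mu_s(x)^p ds$.

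For the matching lower bound I would construct a specific projection that commutes with $|x^*|$ and has trace exactly $t^p$. Set $a = \mu_{t^p}(x) = \mu_{t^p}(|x^*|)$, $e_1 = \Ind_{(a, \infty)}(|x^*|)$, and $f = \Ind_{\{a\}}(|x^*|)$, so that by definition of $\mu_{t^p}$ we have $\tau(e_1) \leq t^p \leq \tau(e_1) + \tau(f)$. The main technical point lies here: since $\M$ is diffuse, the compressed algebra $f\M f$ is diffuse and of finite trace, so it contains projections of every trace in $[0, \tau(f)]$. I can therefore choose $f' \leq f$ with $\tau(f') = t^p - \tau(e_1)$ and set $e = e_1 + f'$, a spectral projection of $|x^*|$ with $\tau(e) = t^p$.

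To conclude, since $e$ commutes with $|x^*|$, we have $(ex)(ex)^* = e|x^*|^2 e = e|x^*|^2$ and therefore
$$\Norm{ex}_p^p = \tau\bigl((e|x^*|^2)^{p/2}\bigr) = \tau(e|x^*|^p) = \tau(e_1 |x^*|^p) + a^p \tau(f').$$
The standard formula $\tau(\Ind_{(c, \infty)}(y) y) = \Int{0}{d_y(c)} \mu_s(y) ds$ applied to $y = |x^*|^p$ identifies $\tau(e_1 |x^*|^p)$ with $\Int{0}{\tau(e_1)} \mu_s(x)^p ds$, and $a^p \tau(f')$ is exactly the integral of $\mu_s(x)^p = a^p$ over the remaining interval $[\tau(e_1), t^p]$. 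Summing gives $\Int{0}{t^p} \mu_s(x)^p ds$, and the result then follows from \eqref{eq:formuleKt}.
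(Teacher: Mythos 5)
Your argument is essentially the paper's: the lower bound comes from a spectral projection of $\md{x^*}$ of trace exactly $t^p$ (with diffuseness supplying the subprojection of $\Ind_{\{a\}}(\md{x^*})$), the upper bound from $\mu_s(ex)\leq\mu_s(x)$ together with the vanishing of $\mu_s(ex)$ past $\tau(e)$, and a single appeal to \eqref{eq:formuleKt} at the end; your version is if anything slightly cleaner, since you isolate the exact identity between the supremum and the truncated integral. The only point to add is the degenerate case $t^p>\tau(1)$, where no projection of trace $t^p$ exists and your claimed inequality $t^p\leq\tau(e_1)+\tau(f)$ fails (there $a=0$ and $\tau(e_1)+\tau(f)=\tau(1)<t^p$); the paper disposes of this case first by observing that the supremum is then attained at $e=1$ and equals $\Norm{x}_p$, which matches $K_t(x,p,\infty)$ by \eqref{eq:formuleKt}.
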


\begin{proof}
If $t^p > \tau(1)$, we have:
$$\sup \{ \Norm{ex}_p : \tau(e) \leq t^p, e\in \Pc(\M) \} = \Norm{x}_p$$
and the proposition is verified by formula (\ref{eq:formuleKt}). So from now on, assume that $t^p\leq \tau(1)$.

Since $\M$ is diffuse, there is a projection $e$ in $\M$ with trace $t^p$, commuting with $\md{x^*}$, such that: 
$$\tau(e\md{x^*}^p) = \Int{0}{t^p} \mu_s(x)^p ds \geq \frac{1}{A_p} K_t(x,p,\infty)^p,$$
where we used \eqref{eq:formuleKt} to obtain the inequality. Furthermore, since $e$ and $\md{x^*}$ live in a commutative von Neumann algebra, they can be represented as functions in a space $L_{\infty}(\Omega)$, thus: 
$$\tau(e\md{x^*}^p) = \tau(e^{p/2}(xx^*)^{p/2}e^{p/2}) = \tau((exx^*e)^{p/2}) = \Norm{ex}_p^p.$$
Hence:
$$ K_t(x,p,\infty) \leq A_p \Norm{ex}_p \leq A_p \sup \{ \Norm{ex}_p : \tau(e) = t^p, e\in \Pc(\M) \}.$$
To prove the converse inequality, take $e \in \Pc(\M)$ such that $\tau(e) = t^p$.
Note that by \eqref{eq:formuleKt}:
$$\Norm{ex}_p^p = \Int{0}{\infty} \mu(\md{ex}^p) = \Int{0}{t^p} \mu(\md{ex})^p \leq A_p K_t(ex,p,\infty)^p.$$
Furthermore for all $s \in \Rb^+$, $\mu_s(ex) \leq \mu_s(x)$, see for example \cite{FackKosaki}. Hence, $K_t(ex,p,\infty) \leq K_t(x,p,\infty)$. Combining the two previous inequalities, we obtain:
$$\Norm{ex}_p \leq A_p K_t(ex,p,\infty) \leq A_p K_{t}(x,p,\infty).$$

\end{proof}

\begin{remark} \label{remark:approxeKt:commutation}
 The proof yields a bit more than the proposition. Indeed, it suffices to consider the supremum over projections $e$ commuting with $\md{x^*}$ to obtain the left hand side inequality. This will be of importance later on.
\end{remark}

\section{Some properties of optimal decompositions in $L_p$ for $p < 2$} \label{section:projection}

\subsection{Main result and consequences}

We stick with the notations introduced in $\S\ref{section:noncommutative khintchine inequalities}$ . In this section, the variables $\xi_i$ will always satisfy the following conditions:

\begin{enumerate}
\item the $\xi_i$ are orthonormal in $L_2(\A)$,
\item the $\xi_i$ verify the lower Khintchine inequality for some $p \leq 1$. More precisely, there exists a constant $B_p$ such that for all $x\in S(\M)$, 
\[
m_p(x)^{1/p} \leq B_p \Norm{Gx}_p. \addtag \label{Kh(p)}
\]
\end{enumerate}

Typical examples of such variables include free Haar unitaries or Rademacher variables. We focus on lower Khintchine inequalities i.e of the type:
$$\Norm{x}_{R_E + C_E} \les \Norm{Gx}_E,$$
for $E$ a symmetric space and $x\in S(\M)$. The converse inequality presents no difficulty in the motivating example of $L_{1,\infty}$ as we will see later. In \cite{RicardDirksen}, it is shown that by applying multiple times Khintchine inequality in $L_{\infty}$ for an element $x$, one can obtain a majoration of $\mu(Gx)$. Though it is less direct, our method is similar and by using the Khintchine inequality in $L_p$ for $p<2$ we obtain a minoration of the $K$-functional of $Gx$. The main theorem of this section is the following.

\begin{theorem} \label{Main_1}
Let $p\in (0,1]$ such that \eqref{Kh(p)} holds. There exists a constant $C_p$ such that for all $x\in S(\M)$ there exist $y,z\in S(\M)$ such that $y + z = x$ and for all $t\geq 0$, 
\begin{center}
$K_t(Ry, p, \infty) \leq C_p K_t(Gx, p, \infty)$ and $K_t(Cz, p, \infty) \leq C_p K_t(Gx, p, \infty)$.
\end{center}
The constant $C_p$ only depends on $A_p$ and $B_p$ which appear in \eqref{eq:formuleKt} and \eqref{Kh(p)} respectively.
\end{theorem}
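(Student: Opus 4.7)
The plan is to take $(y,z)$ to be an (approximately) $L_p$-optimal decomposition of $x$, meaning $\|Ry\|_p^p + \|Cz\|_p^p$ is within $\e$ of $m_p(x)$, and to show this single decomposition controls both $K$-functionals uniformly in $t$. By Proposition~\ref{prop:approx:eKt} and Remark~\ref{remark:approxeKt:commutation}, the row bound reduces to showing
\[
\|R(ey)\|_p \lesssim K_t(Gx,p,\infty)
\]
uniformly over projections $e \in \Pc(\M)$ with $\tau(e) \le t^p$ that commute with $A := \sum_n y_n y_n^*$; the column bound is analogous via right-multiplication by projections commuting with $A' := \sum_n z_n^* z_n$.

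For such an $e$, apply the Khintchine hypothesis \eqref{Kh(p)} to $ex = ey + ez$, and combine with Proposition~\ref{prop:approx:eKt} applied to $Gx$ in $\M_{\A}$:
\[
m_p(ex)^{1/p} \le B_p \|G(ex)\|_p = B_p \|(e \otimes 1) Gx\|_p \le A_p B_p \, K_t(Gx,p,\infty),
\]
using that $e \otimes 1$ is a projection in $\M_{\A}$ of trace at most $t^p$. Hence it suffices to establish the pointwise bound $\|R(ey)\|_p^p \lesssim m_p(ex)$, up to a constant depending only on $p$. This reduction is where the optimality of $(y,z)$ enters.

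To extract this bound, take an $\e$-optimal decomposition $(\tilde y, \tilde z)$ of $ex$, normalized by left-compression so that $e\tilde y = \tilde y$ and $e\tilde z = \tilde z$; this is harmless because $\mu(eXe) \le \mu(X)$ for positive $X$, so compression does not increase row or column norms. Form the competing decomposition $y' = (1-e)y + \tilde y$, $z' = (1-e)z + \tilde z$ of $x$. The left-support condition on $\tilde z$ kills the cross terms in $\sum_n (z'_n)^* z'_n$, so by the Rotfel'd (concave-trace) inequality
\[
\|Cz'\|_p^p \le \|C((1-e)z)\|_p^p + \|C\tilde z\|_p^p.
\]
Meanwhile, the commutation of $e$ with $A$ ensures that $(1-e)A$ and $eA$ have orthogonal supports, giving the clean identity $\|R((1-e)y)\|_p^p = \|Ry\|_p^p - \|R(ey)\|_p^p$. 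Control of the cross terms on the row side (where the analogous cancellation fails) is handled via a Cauchy--Schwarz / block-matrix estimate. The optimality inequality $\|Ry\|_p^p + \|Cz\|_p^p \le \|Ry'\|_p^p + \|Cz'\|_p^p + \e$ is then rearranged to isolate $\|R(ey)\|_p^p \lesssim m_p(ex)$. Proposition~\ref{prop:kfuncandinterpolation} converts the resulting $K$-functional bound into the norm bounds in every interpolation space.

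I expect the main obstacle to be this final rearrangement: a naive Cauchy--Schwarz on the row cross terms produces a factor $2^{p/2}$ together with a residual positive coefficient on $\|Ry\|_p^p$, which is a priori only controlled by $\|Gx\|_p \sim K_\infty(Gx,p,\infty)$ and thus too large to give the bound when $t$ is small. Overcoming this requires either a sharper positive block-matrix inequality or a more refined use of the optimality structure of $(y,z)$ (to show that the "$e$-part" of the optimal decomposition is itself nearly optimal for $ex$, with constants independent of $e$). Once the row estimate is secured, the symmetric argument using right multiplication of $z$ by projections commuting with $A'$ gives the column bound, and the theorem follows with $C_p$ depending only on $A_p$ and $B_p$.
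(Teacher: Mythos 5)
Your core step is essentially the paper's: for a near-optimal decomposition $x=y+z$ and a projection $e$ commuting with $\md{(Ry)^*}$, compare $(y,z)$ with the competitor obtained by re-decomposing $ex$ near-optimally, use Lemma \ref{lem:trick} to isolate $\Norm{R(ey)}_p^p \le m_p(ex) + 2\e \le B_p^p\Norm{G(ex)}_p^p + 2\e$, and convert this into a $K$-functional bound via Proposition \ref{prop:approx:eKt} and Remark \ref{remark:approxeKt:commutation}. Two remarks on the details. The ``main obstacle'' you flag (row cross terms, Cauchy--Schwarz, a residual coefficient on $\Norm{Ry}_p^p$) is a non-issue: since $p\le 1$, the $p$-triangular inequality for the quasi-norm of $L_p(\M\overline{\otimes}\B(\ell^2))$ applied to $Ry' = R((1-e)y) + R\tilde y$ gives $\Norm{Ry'}_p^p \le \Norm{R((1-e)y)}_p^p + \Norm{R\tilde y}_p^p$ with no cross-term analysis whatsoever; this is exactly what the paper does. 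Also note that Proposition \ref{prop:approx:eKt} requires $\M$ diffuse, which the paper arranges by tensoring with $L_{\infty}([0,1])$ (Lemma \ref{lem:eq:diffuse}).

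The genuine gap is elsewhere: the additive error $2\e$ in $\Norm{R(ey)}_p^p \le A_p^pB_p^pK_t(Gx,p,\infty)^p + 2\e$ does not go away. The theorem demands a single decomposition working for \emph{all} $t>0$, and $K_t(Gx,p,\infty)\to 0$ as $t\to 0$, so for small $t$ the fixed constant $\e$ swamps the right-hand side. You cannot set $\e=0$, since optimal decompositions are not known to exist for $p<1$ (this is precisely why the paper works with approximate ones), and you cannot send $\e\to 0$ along a sequence of decompositions because $L_p$ with $p<1$ lacks the compactness needed to extract a limiting decomposition. The paper's fix is Lemma \ref{control:infty}: the near-optimal decomposition can additionally be chosen with $\Norm{Ry}_{\infty},\Norm{Cz}_{\infty}\le 2\Norm{Gx}_{\infty}=:2A$. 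One then takes the optimality error equal to $\eta^p$ with $\eta=K_{\delta}(Gx,p,\infty)$ and $\delta=\tau(\Ind_{(A/2,\infty)}(\md{Gx}))^{1/p}$; for $t\ge\delta$ the monotonicity of $t\mapsto K_t$ absorbs $\eta^p$ into $K_t(Gx,p,\infty)^p$, while for $t<\delta$ one uses $K_t(Ry,p,\infty)\le 2At$ together with $A_pK_t(Gx,p,\infty)\ge tA/2$. Without this (or an equivalent device) your argument proves the estimate only for $t$ bounded below in terms of $\e$, not the theorem as stated.
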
 

Let us highlight some consequences of the theorem. Since we have a control on the $K$-functional, it extends to all interpolation spaces between $L_p$-spaces by proposition \ref{prop:kfuncandinterpolation} and thus we obtain theorem \ref{thm:intro:lower} mentioned in the preliminaries as a corollary. It also allows us to prove the Khintchine inequalities in $L_{1,\infty}$.

\begin{corollary}[The Khintchine inequality in $L_{1,\infty}$] \label{cor:weak1khintchine}
For any $x\in S(\M)$,
$$\Norm{Gx}_{1,\infty} \approx \Norm{x}_{R_{1,\infty} + C_{1,\infty}}.$$
\end{corollary}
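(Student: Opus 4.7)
The plan is to handle the two sides of the equivalence separately, with all the substance concentrated in the lower bound.

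\emph{Lower bound.} Choose $p\in (0,1)$ such that \eqref{Kh(p)} holds (existence of such $p$ is granted by Remark \ref{rem:Kh(p)_indep_of_p} from the standing hypothesis of this section). Theorem \ref{Main_1} then furnishes a decomposition $x = y + z$ in $S(\M)$ with
$$K_t(Ry,p,\infty) \les K_t(Gx,p,\infty) \quad \text{and} \quad K_t(Cz,p,\infty) \les K_t(Gx,p,\infty)$$
for every $t>0$. Since $p<1$, the space $L_{1,\infty}$ is an interpolation space between $L_p$ and $L_\infty$. Reading the $K$-functional estimates through Remark \ref{rem:KtRowCol} and Proposition \ref{prop:Kt_independant of M} (so that the different ambient algebras cause no obstruction) and then feeding the result into Proposition \ref{prop:kfuncandinterpolation} with $E = L_{1,\infty}$, we obtain
$$\Norm{y}_{R_{1,\infty}} + \Norm{z}_{C_{1,\infty}} \les \Norm{Gx}_{1,\infty},$$
whence $\Norm{x}_{R_{1,\infty}+C_{1,\infty}} \les \Norm{Gx}_{1,\infty}$.

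\emph{Upper bound.} For any decomposition $x = y+z$ with $y,z \in S(\M)$, the quasi-triangle inequality in $L_{1,\infty}$ yields $\Norm{Gx}_{1,\infty} \les \Norm{Gy}_{1,\infty} + \Norm{Gz}_{1,\infty}$, so it suffices to prove $\Norm{Gy}_{1,\infty} \les \Norm{y}_{R_{1,\infty}}$ together with its column analogue. For any fixed $p < 1$, the map $y\mapsto Gy$ is bounded $R_p \to \Hs_p$ by the upper Khintchine inequality at $L_p$ (available in this range via \cite{PisierRicard2017}), and is isometric $R_2 \to \Hs_2$ by orthonormality of the $\xi_i$. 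Given a near-optimal splitting $y = y_1+y_2$ for $K_t(Ry,L_p,L_2)$, the induced splitting $Gy = Gy_1 + Gy_2$ delivers $K_t(Gy,L_p,L_2) \les K_t(Ry,L_p,L_2)$. Since $L_{1,\infty}$ is an interpolation space between $L_p$ and $L_2$, a further application of Proposition \ref{prop:kfuncandinterpolation} converts this into $\Norm{Gy}_{1,\infty} \les \Norm{y}_{R_{1,\infty}}$. Taking the infimum over decompositions $x = y + z$ completes the argument.

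\emph{Main obstacle.} All the substance is packed into Theorem \ref{Main_1}: once that $K$-functional lower bound is in hand, the corollary reduces to setting $E = L_{1,\infty}$ and running the interpolation machinery of Section \ref{section:kfunc}. The only technical wrinkle worth flagging is that the $K$-functionals of $Gx$ and of $Ry, Cz$ are naturally computed in different algebras ($\M_\A$ versus $\M_\B$), but Proposition \ref{prop:Kt_independant of M} makes them intrinsic to the singular-value function, so the comparison across algebras is legitimate up to universal constants.
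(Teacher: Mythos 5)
Your proof is correct and follows essentially the same route as the paper: the lower bound comes from Theorem \ref{Main_1} (equivalently Theorem \ref{thm:intro:lower}) combined with Proposition \ref{prop:kfuncandinterpolation} for $E=L_{1,\infty}\in \mathrm{Int}(L_p,L_\infty)$ with $p<1$, and the upper bound from the boundedness of $Ry\mapsto Gy$ on $L_p$ and $L_2$ together with interpolation. Your upper-bound paragraph merely unpacks the paper's one-line ``by interpolation'' through the $K$-functional, which is the same mechanism.
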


\begin{proof}
By remark \ref{rem:Kh(p)_indep_of_p}, we can suppose that $p < 1$. The inequality:
$$ \inf \{\Norm{Ry}_{1,\infty} + \Norm{Cz}_{1,\infty} : x = y + z, y,z\in S(\M)\} \les \Norm{Gx}_{1,\infty} $$
is given by theorem \ref{thm:intro:lower} applied to $E = L_{1,\infty}$.
The converse inequality is classical. We know that the map $Ry \mapsto Gy$ is a contraction on $L_{p}$ and $L_{2}$, hence, by interpolation, it is bounded on $L_{1,\infty}$. By adjunction, the same is true for $Cz \mapsto Gz$. Hence:
$$\Norm{Gx}_{1,\infty} \les \Norm{Gy}_{1,\infty} + \Norm{Gz}_{1,\infty} \les \Norm{Ry}_{1,\infty} + \Norm{Cz}_{1,\infty}.$$
\end{proof}

\begin{remark}
We emphasized the previous corollary because it concerns a special case that motivated our work but the exact same proof works for a general interpolation space betwenn $L_p$, $p<2$ and $L_2$ using interpolation for the upper bound and theorem \ref{thm:intro:lower} for the lower bound. More precisely, let $p<2$ and $E$ an interpolation space between $p$ and $2$ then:
$$\Hs_E = R_E + C_E$$
with equivalent quasi-norms.
\end{remark}

\begin{remark}
As a consequence of the preceding remark and theorem \ref{Main_1}, we obtain the following.
Let $p,q \in (0,2]$, $p\leq q$. Then for all $x\in S(\M)$,
$$K_t(x,\Hs_p,\Hs_q) \les K_t(Gx,p,q).$$
Using the terminology introduced in \cite{Pis92}, the couple $(\Hs_p,\Hs_q)$ is $K$-closed in $(L_p(\M_{\A}), L_q(\M_{\A}))$ (recall that $\Hs_p$ and $\Hs_q$ are identified with subspaces of $L_p(\M_{\A})$ and $L_q(\M_{\A})$ by remark \ref{rem:HrinGr}). This means that the $\Hs_p$-spaces behave well with respect to interpolation for $p\leq 2$.
\end{remark}

\begin{proof}
Let $t>0$ and note that $L_p + tL_q$ is an interpolation space between $L_p$ and $L_q$. Using the fact that $L_q$ is an interpolation space between $L_p$ and $L_{\infty}$ and by applying the definition of an interpolation space, this means that $L_p + tL_q$ is an interpolation space between $L_p$ and $L_{\infty}$. Let $x\in S(\M)$. This means by theorem \ref{Main_1} that there exists $y,z\in S(\M)$ such that $x = y+z$ and:
$$K_t(Ry,p,q) + K_t(Cz,p,q) \les K_t(Gx,p,q).$$
Hence, by remark \ref{rem:KtRowCol}, we have:
$$K_t(y,R_p,R_q) + K_t(z,C_p,C_q) \les K_t(Gx,p,q).$$
Let $y_1,y_2,z_1,z_2 \in S(\M)$ such that $y = y_1 + y_2$, $z = z_1 + z_2$,
\begin{center}
$\Norm{y_1}_{R_p} + t\Norm{y_2}_{R_q} \les K_t(y,R_p,R_q)$ and $\Norm{z_1}_{R_p} + t\Norm{z_2}_{R_q} \les K_t(z,C_p,C_q)$
\end{center}
Let $x_1 = y_1 + z_1$ and $x_2 = y_2 + z_2$, combining the previous inequalities:
$$\Norm{x_1}_{R_p + C_p} + t\Norm{x_2}_{C_p+C_q} \les K_t(Gx,p,q).$$
Since, by the previous remark $R_p + C_p \approx \Hs_p$ and $R_q + C_q \approx \Hs_q$, we obtain:
$$K_t(x,\Hs_p,\Hs_q) \les K_t(Gx,p,q).$$
\end{proof}

\subsection{First steps towards the proof}

In this part, we present the main ideas that will allow us to prove theorem \ref{Main_1}. The central one is contained in proposition \ref{prop:almostmain}. Starting with an element $x$, we use the Khintchine inequality on $e(Gx)$ for well chosen projections $e \in \Pc (\M)$ and thanks to proposition \ref{prop:approx:eKt} we deduce the expected control on $K$-functionals. There are, however, two technical difficulties. The first one is that we could not prove that an optimal decomposition always exists when $p<1$. To skirt this problem, in the next part, we will prove that the argument also works for decompositions that are close enough to being optimal but in this case we need an additional control on the operator norm of the decomposition which is given by lemma \ref{control:infty}. The second difficulty is that to use propositon \ref{prop:approx:eKt}, we need to work in a diffuse algebra. To that effect, we simply tensor our base algebra $\M$ by $L_{\infty}(0,1)$ which fixes the proof immediatly thanks to lemma \ref{lem:eq:diffuse}.

\begin{property} \label{prop:almostmain}
Suppose that $\M$ is diffuse. Let $p\in (0,1]$, if $x\in S(\M)$ and $x = y+z$ is an optimal decomposition in $L_p$ then for all $t\geq 0$, $K_t(Ry, p, \infty) \leq A_p^2 B_p K_t(Gx, p, \infty)$ and $K_t(Cz, p, \infty) \leq A_p^2 B_p K_t(Gx, p, \infty)$. 
\end{property}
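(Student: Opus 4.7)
The plan is to control $K_t(Ry,p,\infty)$ (and symmetrically $K_t(Cz,p,\infty)$) by exploiting Proposition \ref{prop:approx:eKt} and Remark \ref{remark:approxeKt:commutation}: up to the factor $A_p$, these functionals equal the supremum of $\|R(ey)\|_p$ (resp.\ $\|C(ze)\|_p$) over projections $e\in\Pc(\M)$ of trace at most $t^p$ which commute with $\sum_n y_ny_n^*$ (resp.\ $\sum_n z_n^*z_n$). For each such $e$ I aim to prove the key inequality
$$\|R(ey)\|_p^p \;\leq\; m_p(ex).$$
Once this is established, the Khintchine inequality \eqref{Kh(p)} applied to $ex$ gives $m_p(ex)\leq B_p^p\|eGx\|_p^p$, a second use of Proposition \ref{prop:approx:eKt} on $Gx$ yields $\|eGx\|_p\leq A_p K_t(Gx,p,\infty)$, and taking the supremum together with a third use of Proposition \ref{prop:approx:eKt} on $Ry$ produces the stated bound $K_t(Ry,p,\infty)\leq A_p^2 B_p K_t(Gx,p,\infty)$.

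The central technical step is the key inequality above. Fix any decomposition $y'+z'=ex$ in $S(\M)$. Since compressions do not increase singular numbers (see \cite{FackKosaki}), replacing $(y',z')$ by $(ey',ez')$ does not increase $\|Ry'\|_p^p+\|Cz'\|_p^p$, so I may assume $ey'=y'$ and $ez'=z'$. Then $\tilde y := y'+(1-e)y$ and $\tilde z := z'+(1-e)z$ form a decomposition of $x$, and optimality of $(y,z)$ yields
$$\|Ry\|_p^p+\|Cz\|_p^p \;\leq\; \|R\tilde y\|_p^p+\|C\tilde z\|_p^p.$$
The support conditions on $y'$ and $(1-e)y$ make $\sum_n \tilde y_n\tilde y_n^*$ a positive $2\times 2$ block matrix in the $(e,1-e)$ decomposition with diagonal blocks $\sum_n y'_n(y'_n)^*$ and $\sum_n (1-e)y_ny_n^*(1-e)$, so Jensen's inequality applied to the pinching conditional expectation and the operator concave function $t\mapsto t^{p/2}$ (this is where the hypothesis $p\leq 1$, or at least $p\leq 2$, enters) gives $\|R\tilde y\|_p^p\leq \|Ry'\|_p^p+\|R((1-e)y)\|_p^p$, and symmetrically for $\|C\tilde z\|_p^p$. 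Next, the commutation of $e$ with $\sum_n y_ny_n^*$ makes $(\sum_n y_ny_n^*)^{p/2}$ block diagonal in the $(e,1-e)$ decomposition, whence the \emph{equality} $\|R((1-e)y)\|_p^p=\|Ry\|_p^p-\|R(ey)\|_p^p$; on the $z$-side, the pointwise bound $\sum_n z_n^*(1-e)z_n\leq \sum_n z_n^*z_n$ together with the operator monotonicity of $t\mapsto t^{p/2}$ yields the mere \emph{inequality} $\|C((1-e)z)\|_p^p\leq \|Cz\|_p^p$. Substituting these into the optimality inequality lets $\|Ry\|_p^p$ and $\|Cz\|_p^p$ cancel on both sides, leaving $\|R(ey)\|_p^p\leq \|Ry'\|_p^p+\|Cz'\|_p^p$. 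Taking the infimum over decompositions of $ex$ delivers the key inequality.

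The estimate for $K_t(Cz,p,\infty)$ follows by an entirely symmetric argument applied to a decomposition $y'+z'=xe$ with $e$ commuting with $\sum_n z_n^*z_n$, and perturbations $\tilde y=y'+y(1-e)$, $\tilde z=z'+z(1-e)$ of $(y,z)$; the roles of left and right multiplication are simply exchanged. The step I expect to be most delicate is this $2\times 2$ block-pinching: one must carefully track on which side of each $y_n$ or $z_n$ the projection $e$ sits so that the cross-terms in $\sum_n\tilde y_n\tilde y_n^*$ (resp.\ $\sum_n\tilde z_n^*\tilde z_n$) organise into a genuine positive $2\times 2$ block matrix amenable to Jensen's inequality for $t^{p/2}$, and so that the asymmetric combination of the equality coming from the commutation hypothesis on the $y$-side with the mere monotonicity bound on the $z$-side still produces an upper bound on $\|R(ey)\|_p^p$ alone, rather than on the symmetric sum $\|R(ey)\|_p^p+\|C(ez)\|_p^p$.
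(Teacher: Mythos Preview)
Your proof is correct and follows essentially the same route as the paper's: fix a projection $e$ commuting with $|(Ry)^*|$, compare the optimal pair $(y,z)$ to a competitor built from a decomposition of $ex$ glued to the piece on $1-e$, cancel via Lemma \ref{lem:trick}, apply \eqref{Kh(p)} to $ex$, and conclude with Proposition \ref{prop:approx:eKt} and Remark \ref{remark:approxeKt:commutation}. The only difference is a minor streamlining: the paper introduces a second near-optimal decomposition of $(1-e)x$ and then bounds it by the trivial one $((1-e)y,(1-e)z)$, whereas you plug in that trivial decomposition from the start---this saves an $\epsilon$ and makes the block-pinching argument (and with it the extension to $p\le 2$ noted in Remark \ref{rem:1<p<2}) explicit.
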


\begin{proof}
Let $e$ be a projection commuting with $\md{(Ry)^*}$ and $f = 1-e$. Take $\e > 0$ and $y_1,y_2,z_1,z_2\in S(\M)$ such that $ex = y_1 + z_1$, $fx = y_2 + z_2$, $\Norm{Ry_1}_p^p + \Norm{Cz_1}_p^p \leq m_{p}(ex) + \e$ and $\Norm{Ry_2}_p^p + \Norm{Cz_2}_p^p \leq m_{p}(fx) + \e$. We can write $x = ex + fx = y_1 + z_1 + y_2 + z_2$. Then by minimality of $y$ and $z$:
$$\Norm{Ry}_p^p + \Norm{Cz}_p^p \leq \Norm{R(y_1 + y_2)}_p^p + \Norm{C(z_1 + z_2)}_p^p.$$ 
We use the $p$-triangular inequality and obtain that:
$$\Norm{R(y_1 + y_2)}_p^p + \Norm{C(z_1 + z_2)}_p^p \leq \Norm{Ry_1}_p^p + \Norm{Cz_1}_p^p + \Norm{Ry_2}_p^p + \Norm{Cz_2}_p^p.$$
Combined with lemma $\ref{lem:trick}$, we get:
$$\Norm{R(ey)}_p^p + \Norm{R(fy)}_p^p + \Norm{Cz}_p^p \leq \Norm{Ry_1}_p^p + \Norm{Cz_1}_p^p + \Norm{Ry_2}_p^p + \Norm{Cz_2}_p^p.$$
Now using the almost minimality of the couple $(y_2, z_2)$ and lemma $\ref{lem:trick}$, we obtain:
$$\Norm{Ry_2}_p^p + \Norm{Cz_2}_p^p \leq \Norm{R(fy)}_p^p + \Norm{C(fz)}_p^p + \e \leq \Norm{R(fy)}_p^p + \Norm{Cz}_p^p + \e.$$
Hence, combining the last two inequalities and then using the Khintchine inequality for $p$:
$$\Norm{R(ey)}_p^p \leq \Norm{Ry_1}_p^p + \Norm{Cz_1}_p^p + \e \leq B_p^p\Norm{G(ex)}_p^p + 2\e.$$
This is true for all $\e>0$ so:
$$\Norm{R(ey)}_p^p \leq B_p^p\Norm{G(ex)}_p^p.$$
The previous inequality holds for all projections $e$ commuting with $\md{(Ry)^*}$.
Taking the supremum over all such $e$ with $\tau(e) \leq t^p$ and using proposition $\ref{prop:approx:eKt}$ and remark $\ref{remark:approxeKt:commutation}$, we obtain:
$$K_t(Ry, p, \infty) \leq A_p^2 B_p K_t(Gx, p, \infty).$$
The case of $z$ is exactly symmetrical by taking adjoints and so the proof is complete.
\end{proof} 

To prove the theorem without making any assumptions, the following lemma is crucial.

\begin{lemma} \label{control:infty}
Let $\e > 0$ and $x\in S(\M)$. There exist $y$ and $z$ in $S(\M)$ such that $x = y + z$, $\Norm{Ry}_p^p + \Norm{Cz}_p^p \leq m_p(x) + \e$, $\Norm{Ry}_{\infty} \leq 2\Norm{Gx}_{\infty}$ and $\Norm{Cz}_{\infty} \leq 2\Norm{Gx}_{\infty}$.
\end{lemma}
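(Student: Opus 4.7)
The plan is to start with a decomposition $(y_0,z_0)\in S(\M)^2$ satisfying $\Norm{Ry_0}_p^p+\Norm{Cz_0}_p^p\leq m_p(x)+\e/2$ (which exists by the very definition of the infimum $m_p(x)$), and then modify it via spectral truncations to enforce the operator-norm constraints while losing at most $\e/2$ more in $L_p$ cost. Set $R:=\Norm{Gx}_\infty$. The key preliminary observation is that $\Norm{Rx}_\infty,\Norm{Cx}_\infty\leq R$: applying the trace-preserving conditional expectation $E_\M:=\mathrm{id}\otimes\tau_\A:\M_{\A}\to\M$ to $(Gx)(Gx)^*$ and $(Gx)^*(Gx)$ and using orthonormality of $(\xi_i)$ in $L_2(\A)$ yields $\sum_i x_ix_i^*\leq R^2\cdot 1$ and $\sum_i x_i^*x_i\leq R^2\cdot 1$. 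This a priori bound on $x$ itself explains why $2R$ is the natural cut-off level for the pieces $y,z$.

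Set $a_0:=|(Ry_0)^*|\in\M$ and $e:=\Ind_{(2R,\infty)}(a_0)$. The truncated sequence $y_i':=(1-e)y_{0,i}$ satisfies $\sum y_i'y_i'^*=(1-e)a_0^2(1-e)\leq (2R)^2$ (since $e$ commutes with $a_0$), so $\Norm{Ry'}_\infty\leq 2R$ and $\Norm{Ry'}_p\leq\Norm{Ry_0}_p$. Setting $z_i':=z_{0,i}+ey_{0,i}$ preserves the condition $y'+z'=x$ but may blow up $\Norm{Cz'}_\infty$. To remedy this, I would apply Khintchine \eqref{Kh(p)} to the transferred piece $ey_0$: this yields a near-optimal decomposition $ey_0=w_y+w_z$ with $\Norm{Rw_y}_p^p+\Norm{Cw_z}_p^p\leq m_p(ey_0)+\e/4\leq\Norm{ea_0}_p^p+\e/4$, where the second inequality comes from taking $(ey_0,0)$ as a competing decomposition. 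Redefining $y:=(1-e)y_0+w_y$ and $z:=z_0+w_z$, the $p$-triangle inequality (valid for $p\leq 1$) yields $\Norm{Ry}_p^p+\Norm{Cz}_p^p\leq m_p(x)+3\e/4$; a symmetric step at the column side (using $\Ind_{(2R,\infty)}(|Cz|)$) would handle $\Norm{Cz}_\infty$.

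The main obstacle is that each re-decomposition step may itself re-introduce parts of $w_y$ or $w_z$ with large operator norm, so a single pass does not suffice. The plan is to iterate the procedure above, alternating row- and column-side truncations at level $2R$ and re-decomposing the transferred mass via \eqref{Kh(p)} at each stage; the hope is that the portion of the decomposition exceeding the cut-off decreases geometrically (using that the total $L_p$ cost is bounded below by $m_p(x)$, so that any strict improvement of the cut-off cannot continue indefinitely), and hence the iteration converges with total extra $L_p$-cost majorised by a geometric series summing to at most $\e/2$. The technical heart of the argument is the geometric decay estimate, which requires operator Cauchy--Schwarz to control the cross-terms arising in $\sum y_iy_i^*$ and $\sum z_i^*z_i$ after each recombination, using the a priori bounds $\sum x_ix_i^*,\sum x_i^*x_i\leq R^2\cdot 1$ derived in the first paragraph.
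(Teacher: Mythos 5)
Your first paragraph is on target: the bound $\Norm{Rx}_{\infty},\Norm{Cx}_{\infty}\leq\Norm{Gx}_{\infty}$ obtained by applying $\mathrm{id}\otimes\tau_{\A}$ to $\md{(Gx)^*}^2$ and $\md{Gx}^2$ is exactly the a priori estimate the paper uses, and the spectral projections of $\md{(Ry)^*}$ and $\md{Cz}$ are the right truncation objects. But the argument does not close. After truncating the row part you transfer $ey_0$ to the column side, re-decompose it near-optimally, and must then iterate because the new pieces may again violate the operator-norm bound. The claimed geometric decay of the iteration is the whole difficulty and is not established; there is no visible mechanism for it. The inequality $m_p(ey_0)\leq\Norm{R(ey_0)}_p^p$ goes the wrong way to force a quantified decrease of the total $L_p$ cost at each pass, so the lower bound by $m_p(x)$ does not prevent the scheme from stalling with the portion above the cut-off never shrinking. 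As written this is a plan whose technical heart is missing. (Also note that re-decomposing $ey_0$ only requires the definition of $m_p$, not the Khintchine inequality \eqref{Kh(p)}; the latter plays no role in this lemma.)

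The idea you are missing is that no re-decomposition of the transferred mass is needed: with $A=\Norm{Gx}_{\infty}$, $e=\Ind_{[A,\infty)}(\md{(Ry)^*})$ and $f=\Ind_{[A,\infty)}(\md{Cz})$, one replaces the bad row piece $ey$ by $ex$ itself and puts $e^{\bot}xf$ on the column side, i.e.\ one sets $y'=e^{\bot}yf^{\bot}+ex$ and $z'=e^{\bot}zf^{\bot}+e^{\bot}xf$. The piece $ex$ automatically satisfies $\Norm{R(ex)}_{\infty}\leq\Norm{Rx}_{\infty}\leq A$ by your own first paragraph, so a single pass suffices; and the swap is free in $L_p$ because $\md{(R(ex))^*}^2=e\p{\Sum{i}{}x_ix_i^*}e\leq A^2e$ while $\md{(R(ey))^*}=e\md{(Ry)^*}\geq Ae$ by the definition of $e$ (this uses that $e$ commutes with $\md{(Ry)^*}$, cf.\ lemma \ref{lem:trick}), whence $\Norm{R(ex)}_p^p\leq\tau(e)A^p\leq\Norm{R(ey)}_p^p$ and therefore $\Norm{Ry'}_p^p\leq\Norm{R(e^{\bot}y)}_p^p+\Norm{R(ey)}_p^p=\Norm{Ry}_p^p$; the column side is symmetric. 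The operator-norm bound $2A$ then comes from the triangle inequality $\Norm{Ry'}_{\infty}\leq\Norm{R(e^{\bot}yf^{\bot})}_{\infty}+\Norm{R(ex)}_{\infty}\leq 2A$, and the total $L_p$ cost is not increased at all beyond the initial $\e$.
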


\begin{proof}
With the notations of the lemma, choose $y,z\in S(\M)$ such that $y + z = x$ and $\Norm{Ry}_p^p + \Norm{Cz}_p^p \leq m_p(x) + \e$ and denote $\Norm{Gx}_{\infty} = A$. Let $e = \Ind_{[A,\infty)}(\md{(Ry)^*})$, $f = \Ind_{[A,\infty)}(\md{Cz})$. We write $x = e^{\bot}xf^{\bot} + ex + e^{\bot}xf$ and deduce the new decomposition: $y' = e^{\bot}yf^{\bot} + ex$ and $z' = e^{\bot}zf^{\bot} + e^{\bot}xf$. Let us check that it satisfies the conditions of the lemma.

Note that $\Norm{R(ex)}_{\infty} \leq \Norm{R(x)}_{\infty} \leq A$. The last inequality follows from the fact that the $\xi_i$ are orthonormal and that consequently the map $Id \otimes \tau_{\A}: \M \overline{\otimes} \A \to \M$ sends $\md{(Gx)^*}^2$ to $\md{(Rx)^*}^2$. Moreover, the left support of $R(ex)$ is less than $e$, indeed: $\md{(R(ex))^*}^2 = e(\Sum{i}{} x_ix_i^*)e$. Hence $\md{R(ex)^*} \leq Ae$. Note also that $\md{(R(ey))^*} \geq Ae$. Indeed, since $e$ and $\md{(Ry)^*}$ commute by lemma $\ref{lem:trick}$, $\md{(R(ey))^*} = e\md{(Ry)^*} \geq Ae$ by definition of $e$. By symmetry, we have the same kind of estimates for the columns i.e $\md{C(e^{\bot}xf)} \leq Af$ and $\md{C(zf)} \geq Af$. For rows, we get: 
\begin{align*}
\Norm{Ry'}_p^p &\leq \Norm{R(e^{\bot}yf^{\bot})}_p^p + \Norm{R(ex)}_p^p \\
&\leq \Norm{R(e^{\bot}y)}_p^p + \tau(e)A^p \\
&\leq \Norm{R(e^{\bot}y)}_p^p + \Norm{R(ey)}_p^p \\
&= \Norm{Ry}_p^p 
\end{align*}
where the last inequality is given by lemma $\ref{lem:trick}$.
Similarly, for columns, we get:
$$\Norm{Cz'}_p^p \leq \Norm{Cz}_p^p.$$
Consequently:
$$\Norm{Ry'}_p^p + \Norm{Cz'}_p^p \leq \Norm{Ry}_p^p + \Norm{Cz}_p^p \leq m_p(x) + \e.$$
The control in $L_{\infty}$ also follows quickly:
\begin{align*}
\Norm{Ry'}_{\infty} &\leq \Norm{R(e^{\bot}yf^{\bot})}_{\infty} + \Norm{R(ex)}_\infty \\
&\leq \Norm{R(e^{\bot}y)}_{\infty} + \Norm{Rx}_{\infty} \\
&\leq 2A
\end{align*}
where the last inequality is a consequence of the definition of $e$.
The case of columns is as always symmetrical which concludes the proof.
\end{proof}

The following lemma is the key to remove the hypothesis that $\M$ is diffuse.

\begin{lemma}\label{lem:eq:diffuse}
Consider the noncommutative measure space $\N = \M \overline{\otimes} L_{\infty}([0,1])$ equipped with the tensor product trace and identify $\M$ with $\M \otimes 1 \subset \N$. Then for any $p>0$ and $x\in S(\M)$:
$$ m_p(x) = \inf \{ \Norm{Rf}_p^p + \Norm{Cg}_p^p : f + g = x, f,g\in S(\N) \}. $$
\end{lemma}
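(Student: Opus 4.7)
The inequality $\inf\{\Norm{Rf}_p^p + \Norm{Cg}_p^p : f+g=x, f,g\in S(\N)\} \leq m_p(x)$ is immediate from the inclusion $S(\M) \subset S(\N)$ (via $a\mapsto a\otimes 1$): such an embedding preserves supports (since $s(|a\otimes 1|) = s(|a|)\otimes 1$ has trace $\tau_\M(s(|a|))$), and by the definition of the tensor product trace, $\Norm{R(y\otimes 1)}_{p}^p$ computed in $\N$ equals $\Norm{Ry}_p^p$ computed in $\M$, and similarly for columns.

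For the reverse inequality, the plan is to slice elements of $\N$ over the commutative factor $L_\infty([0,1])$ and apply Fubini. Given $f,g\in S(\N)$ with $f+g=x$, each component $f_i\in \N_c$ can be identified with an essentially bounded measurable map $\omega \mapsto f_i(\omega) \in \M$, and similarly for $g_i$. Since $s(|f_i|)$ corresponds to the slice-by-slice function $\omega\mapsto s(|f_i(\omega)|)$, the condition $f_i\in\N_c$ reads $\int_0^1 \tau(s(|f_i(\omega)|))\, d\omega < \infty$, so for almost every $\omega$ we have $f_i(\omega)\in \M_c$, and likewise for $g_i(\omega)$. Thus, for a.e.\ $\omega$, the finite sequences $f(\omega):=(f_i(\omega))_i$ and $g(\omega):=(g_i(\omega))_i$ lie in $S(\M)$ and satisfy $f(\omega)+g(\omega) = x$ (since $x\in\M$ is constant in $\omega$).

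By the very definition of $m_p(x)$, for a.e.\ $\omega$,
$$m_p(x) \leq \Norm{R f(\omega)}_p^p + \Norm{C g(\omega)}_p^p.$$
The key computation is then that $\Norm{Rf}_p^p$ decomposes as an integral over $\omega$. Indeed, $|Rf^*|^2 = \sum_i f_i f_i^*$ corresponds slice-wise to $\omega\mapsto \sum_i f_i(\omega)f_i(\omega)^*$; applying Borel functional calculus slice-by-slice and using $\tau_\N = \tau_\M \otimes \int_0^1$, we obtain
$$\Norm{Rf}_p^p = \tau_\N\big((\textstyle\sum_i f_i f_i^*)^{p/2}\big) = \int_0^1 \tau_\M\big((\textstyle\sum_i f_i(\omega)f_i(\omega)^*)^{p/2}\big)\, d\omega = \int_0^1 \Norm{Rf(\omega)}_p^p\, d\omega,$$
and symmetrically $\Norm{Cg}_p^p = \int_0^1 \Norm{Cg(\omega)}_p^p\, d\omega$. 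Integrating the pointwise inequality over $[0,1]$ yields $m_p(x) \leq \Norm{Rf}_p^p + \Norm{Cg}_p^p$, and taking the infimum over $(f,g)$ concludes.

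The only point requiring care is the slicing and Fubini step: identifying $\N$-elements with $\M$-valued measurable functions and verifying that the $p$-th power of the row/column norm is genuinely computed slice-wise. This is where the commutativity of the $L_\infty([0,1])$-factor is essential, since it allows the functional calculus to commute with the evaluation $\omega\mapsto \cdot(\omega)$.
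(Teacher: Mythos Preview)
Your proof is correct and follows exactly the same approach as the paper's: one inequality is immediate from the inclusion $S(\M)\subset S(\N)$, and the other comes from slicing a decomposition $f,g\in S(\N)$ over $[0,1]$, using that $\Norm{Rf}_p^p = \int_0^1 \Norm{Rf(\omega)}_p^p\,d\omega$ and that each slice gives a decomposition of $x$ in $S(\M)$. You simply spell out more carefully the points (support finiteness of slices, commutation of functional calculus with slicing) that the paper's proof takes for granted.
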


\begin{proof}
Since $\M \subset \N$ the inequality:
$$ m_p(x) \geq \inf \{ \Norm{Rf}_p^p + \Norm{Cg}_p^p : f + g = x, f,g\in S(\N) \}$$
is clear.

Let $f,g \in S(\N)$ such that $f + g = x$. Seeing $f$ and $g$ as functions from $[0,1]$ to $S(\M)$ we write:
$$ \Norm{Rf}_p^p + \Norm{Cg}_p^p = \Int{0}{1} \Norm{R(f(t))}_p^p + \Norm{C(g(t))}_p^p dt \geq \Int{0}{1} m_p(x) dt = m_p(x).$$
\end{proof}
Extend the notation $m_p(h)$ to elements $h\in S(\N)$ by:
$$m_p(h) := \inf \{ \Norm{Rf}_p^p + \Norm{Cg}_p^p : f + g = h, f,g\in S(\N) \}.$$
When $x\in S(\M)$ is considered as an element of $S(\N)$, there are now two definitions of $m_p(x)$. Either the decompositions can be taken is $S(\N)$ or in $S(\M)$. Note that by the lemma above, these two definitions coincide. 

\subsection{Proof of the theorem in full generality}

In this section, we present a proof of the main result using decompositions that are close to be optimal rather than optimal. We essentially follow the proof of proposition $\ref{prop:almostmain}$ but we need some additional care and lemma $\ref{control:infty}$ to get the final estimate. Another approach is to work in an ultraproduct where optimal decompositions always exist. The two strategies yield the same constants but we present the elementary one since it ended up also being the less technical.

\begin{lemma} \label{lem:est:Ktb}
Let $p\in (0,1]$. For all $x\in S(\M)$, $\eta>0$ and decompositions $x=y+z$ such that $\Norm{Ry}_p^p + \Norm{Cz}_p^p \leq m_p(x) + \eta^p$, we have:
$$K_t(Ry,p,\infty) \leq A_p\p{A_p^p B_p^pK_t(Gx,p,\infty)^p + \eta^p}^{1/p}$$
for all $t>0$.
\end{lemma}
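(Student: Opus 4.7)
The plan is to mimic the argument of proposition \ref{prop:almostmain} almost verbatim, carefully tracking how the near-optimality slack $\eta^p$ propagates through the inequalities. As in the proof sketch preceding lemma \ref{control:infty}, I would first reduce to the case where $\M$ is diffuse by passing to $\N = \M\otimes L_\infty(0,1)$: the norms $\Norm{Ry}_p$, $\Norm{Cz}_p$ and $\Norm{Gx}_p$ are unchanged when $x,y,z\in S(\M)$ are viewed inside $\N$, and by lemma \ref{lem:eq:diffuse} the quantity $m_p(x)$ is also preserved, so the hypothesis $\Norm{Ry}_p^p+\Norm{Cz}_p^p\le m_p(x)+\eta^p$ transports unchanged.

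Next, I would fix $t>0$ and a projection $e\in\Pc(\N)$ commuting with $\md{(Ry)^*}$ with $\tau(e)\le t^p$, set $f=1-e$, and for any $\e>0$ pick $\e$-almost-optimal decompositions $ex=y_1+z_1$ and $fx=y_2+z_2$ in $S(\N)$, so that
\[
\Norm{Ry_i}_p^p+\Norm{Cz_i}_p^p\le m_p((e_i)x)+\e,\quad i=1,2,
\]
with $e_1=e$, $e_2=f$. Since $x=(y_1+y_2)+(z_1+z_2)$ is a decomposition of $x$, applying the near-optimality of $(y,z)$ together with the $p$-triangular inequality in $R_p$ and $C_p$ gives
\[
\Norm{Ry}_p^p+\Norm{Cz}_p^p\le \Norm{Ry_1}_p^p+\Norm{Cz_1}_p^p+\Norm{Ry_2}_p^p+\Norm{Cz_2}_p^p+\eta^p.
\]
The commutation of $e$ with $\md{(Ry)^*}$ and lemma \ref{lem:trick} allow me to split $\Norm{Ry}_p^p=\Norm{R(ey)}_p^p+\Norm{R(fy)}_p^p$, and combining with the near-optimality for the pair $(y_2,z_2)$ on the decomposition $fx=fy+fz$ (again using lemma \ref{lem:trick} to control $\Norm{C(fz)}_p^p\le \Norm{Cz}_p^p$) the terms telescope exactly as in proposition \ref{prop:almostmain}, yielding
\[
\Norm{R(ey)}_p^p \le \Norm{Ry_1}_p^p+\Norm{Cz_1}_p^p+\e+\eta^p.
\]
Applying the Khintchine lower bound \eqref{Kh(p)} to $ex$ and letting $\e\to 0$ gives
\[
\Norm{R(ey)}_p^p \le B_p^p\Norm{G(ex)}_p^p+\eta^p.
\]

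Finally, I would take $p$-th roots and the supremum over all admissible $e$. Since $a\mapsto(a+\eta^p)^{1/p}$ is increasing,
\[
\sup_{e}\Norm{R(ey)}_p \le \bigl(B_p^p\sup_{e}\Norm{G(ex)}_p^p+\eta^p\bigr)^{1/p}.
\]
By remark \ref{remark:approxeKt:commutation}, $K_t(Ry,p,\infty)\le A_p\sup_{e}\Norm{R(ey)}_p$, where the supremum is restricted to projections $e$ commuting with $\md{(Ry)^*}$ with $\tau(e)\le t^p$; and by the easy direction of proposition \ref{prop:approx:eKt} applied in $\N\overline\otimes\A$ to $Gx$ (identifying such an $e$ with $e\otimes 1$, which has the same trace), $\sup_{e}\Norm{G(ex)}_p\le A_p K_t(Gx,p,\infty)$. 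Substituting these two bounds produces exactly
\[
K_t(Ry,p,\infty)\le A_p\bigl(A_p^pB_p^pK_t(Gx,p,\infty)^p+\eta^p\bigr)^{1/p}.
\]

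The main obstacle is purely bookkeeping: the arithmetic of $p$-th powers versus $p$-th roots for $p\le 1$ forbids me from simply ``inserting $\eta$'' at the end, so I must keep the error as $\eta^p$ inside a single $p$-th power throughout and only extract the root once, using monotonicity to commute the supremum past $(\cdot+\eta^p)^{1/p}$. Everything else is the strict parallel of proposition \ref{prop:almostmain}.
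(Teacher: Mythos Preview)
Your proposal is correct and follows essentially the same route as the paper's own proof: pass to the diffuse algebra $\N$ via lemma \ref{lem:eq:diffuse}, repeat the telescoping argument of proposition \ref{prop:almostmain} while carrying the slack $\eta^p$, and conclude with proposition \ref{prop:approx:eKt} and remark \ref{remark:approxeKt:commutation}. Your explicit remarks on commuting the supremum with $(\cdot+\eta^p)^{1/p}$ and on identifying $e$ with $e\otimes 1$ in $\N\overline{\otimes}\A$ simply make precise what the paper leaves implicit.
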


\begin{proof}
Let $p\in (0,1]$ and  $x\in S(\M)$. Take $\eta > 0$ and $y,z \in S(\M)$ such that $y + z = x$ and $\Norm{Ry}_p^p + \Norm{Cz}_p^p \leq m_p(x) + \eta^p$. To be able to use lemma $\ref{prop:approx:eKt}$, we need to work in a diffuse algebra so we will now consider $x,y$ and $z$ as elements of $S(\N)$.

We can now repeat the argument of the proof of proposition $\ref{prop:almostmain}$. Take $e$ a projection commuting with $\md{(Ry)^*}$ in $S(\N)$ and $f = 1 - e$. Take $\e>0$ and $ex = y_1 + z_1$ , $fx = y_2 + z_2$ such that $\Norm{Ry_1}_p^p + \Norm{Cz_1}_p^p \leq m_{p}(ex) + \e$ and $\Norm{Ry_2}_p^p + \Norm{Cz_2}_p^p \leq m_{p}(fx) + \e$.  By definition of $y$ and $z$, $\Norm{Ry}_p^p + \Norm{Cz}_p^p \leq m_p(x) + \eta^p$, hence :
$$\Norm{Ry}_p^p + \Norm{Cz}_p^p \leq \Norm{R(y_1 + y_2)}_p^p + \Norm{C(z_1 + z_2)}_p^p + \eta^p.$$ 
By the $p$-triangular inequality and lemma $\ref{lem:trick}$:
$$\Norm{R(ey)}_p^p + \Norm{R(fy)}_p^p + \Norm{Cz}_p^p \leq \Norm{Ry_1}_p^p + \Norm{Cz_1}_p^p + \Norm{Ry_2}_p^p + \Norm{Cz_2}_p^p + \eta^p.$$
Now using the almost minimality of the couple $(y_2, z_2)$ and lemma $\ref{lem:trick}$, we obtain:
$$\Norm{Ry_2}_p^p + \Norm{Cz_2}_p^p \leq \Norm{R(fy)}_p^p + \Norm{C(fz)}_p^p + \e \leq \Norm{R(fy)}_p^p + \Norm{Cz}_p^p + \e.$$
And we conclude using the Khintchine inequality: 
$$\Norm{R(ey)}_p^p \leq \Norm{Ry_1}_p^p + \Norm{Cz_1}_p^p + \e + \eta^p \leq B_p^p\Norm{G(ex)}_p^p + 2\e + \eta^p.$$
This is true for all $\e>0$ so:
$$\Norm{R(ey)}_p^p \leq B_p^p\Norm{G(ex)}_p^p + \eta^p.$$

Taking the supremum over all projections $e\in\N$ commuting with $\md{(Ry)^*}$, by proposition $\ref{prop:approx:eKt}$, we obtain that:
\[
K_t(Ry,p,\infty) \leq A_p( A_p^p B_p^p K_t(Gx,p,\infty)^p + \eta^p)^{1/p}. 
\]
\end{proof}

\begin{proof}[Proof of theorem $\ref{Main_1}$]
Let $x\in S(\M)$. We need to find $y$ and $z$ in $S(\M)$ such that $y + z = x$,  
$$K_t(Ry, p,\infty) \les K_t(Gx, p, \infty)$$ 
and similarly for $Cz$. Write again $A = \Norm{Gx}_{\infty}$. Define $\delta := \tau(\Ind_{\md{Gx}>A/2})^{1/p}$. For $t<\delta$, ($\ref{eq:formuleKt}$) gives:
$$A_p K_t(Gx,p,\infty) \geq (\Int{0}{t^p} \mu_u(Gx)^p du)^{1/p} \geq \frac{tA}{2}.$$
Let $ \eta = K_{\delta}(Gx,p,\infty)$.

Take $y,z$ such that $\Norm{Ry}_p^p + \Norm{Cz}_p^p \leq m_p(x) + \eta^p$, $\Norm{Ry}_{\infty} \leq 2A$ and $\Norm{Cz}_{\infty} \leq 2A$. This is possible by lemma $\ref{control:infty}$. For $t\geq \delta$, using lemma $\ref{lem:est:Ktb}$ and since $t \mapsto K_t(Gx,p,\infty)$ is increasing, we have:
\begin{align*}
K_t(Ry,p,\infty) &\leq A_p( A_p^p B_p^p K_t(Gx,p,\infty)^p + \eta^p)^{1/p} \\
&\leq A_p( A_p^p B_p^p K_t(Gx,p,\infty)^p + K_{\delta}(Gx,p,\infty)^p)^{1/p} \\
&\leq A_p(A_p^p B_p^p+1)^{1/p} K_t(Gx,p,\infty).\\
\end{align*}

For $t<\delta$, we know that $A_pK_t(Gx,p,\infty) \geq \dfrac{tA}{2}$ and since $\Norm{Ry}_{\infty} \leq 2A$, we also have $K_t(Ry, p,\infty) \leq 2At$ so $K_t(Ry,p,\infty) \leq 4A_pK_t(Gx,p,\infty)$. 

Let $C_p = \max (A_p(A_p^p B_p^p+1)^{1/p}, 4A_p)$. We have just proven that for all $t>0$, $K_t(Ry,p,\infty) \leq C_pK_t(Gx,p,\infty)$. Since the argument is perfectly symmetrical, we also have $K_t(Cz,p,\infty) \leq C_pK_t(Gx,p,\infty)$.
\end{proof}

\begin{remark}\label{rem:1<p<2}
The theorem still holds when $p\in (1,2]$. Indeed, the $p$-triangular inequality is false in general but here, we only use it to prove inequalities of the type:
\[
\Norm{ex + fy}_p^p \leq \Norm{ex}_p^p + \Norm{fy}_p^p \addtag \label{equa:ptringular}
\]
where $e$ is a projection, $f = 1 - e$ and $x,y\in \M$.
This still holds for $p\in [1,2]$ using the inequality, for all $a,b\in\M$:
\[
\dfrac{\Norm{a-b}_p^p + \Norm{a+b}_p^p}{2} \leq \Norm{a}_p^p + \Norm{b}_p^p. \addtag \label{equa:geo}
\]
Note that $ex + fy = (e-f)(ex - fy)$ and that $e - f$ is a unitary. Hence $\Norm{ex + fy}_p = \Norm{ex - fy}_p$. Now take $a = ex$ and $b = fy$ in $(\ref{equa:geo})$ to obtain $(\ref{equa:ptringular})$.
To prove $(\ref{equa:geo})$ one can for example apply the Riesz-Thorin interpolation theorem to the application $T: (x,y) \mapsto (x+y,x-y)$. 
\end{remark}

\section{Further results on optimal decompositions in $L_1$} \label{Section alpha}

In this section, we investigate further the properties of optimal decompositions in $L_1$. The first notable fact is that in this case, we can prove that an optimal decomposition always exists (see lemma \ref{lem:exist_optimal_decomposition}). Knowing this, a simple duality argument yields a factorisation for elements in $S(\M)$ (theorem \ref{thm:factorization}). Remarkably, this result is of purely algebraic nature and combined with \cite{Ric18} which provides the necessary estimate on anticommutators, produces a new proof of Khintchine inequalities. The main novelty is the emergence of elements $\alpha,\beta \in \M^+$ associated to $x\in S(\M)$ which play the role of a "modulus" in $\Hs_p$-spaces (theorem \ref{thm:intro:alpha}). In particular, there is no need in the proofs to distinguish between $p\leq 2$ or $p\geq 2$. The drawback of the method is that it relies on Khintchine inequalities in $L_{\infty}$ and therefore does not apply, at least directly, to Rademacher variables. Let us already assume that for all $x\in S(\M)$, 
\[ 
\Norm{Gx}_{\infty} \les_{c_{\infty}} \max (\Norm{Rx}_{\infty}, \Norm{Cx}_{\infty}). \addtag \label{equa:ineqKhininfty}
\]
We start by proving the existence of an optimal decomposition in $L_1$. The argument is straightforward by taking a limit of a minimising sequence of decompositions.

\begin{lemma}\label{lem:exist_optimal_decomposition}
Let $x\in S(\M)$. There exist $y,z \in S(\M)$ such that $y + z = x$ and $\Norm{Ry}_1 + \Norm{Cz}_1 = m_1(x)$. 
\end{lemma}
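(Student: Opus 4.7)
The plan is to construct the optimal decomposition as a weak limit of a carefully chosen minimising sequence, after reducing to a finite corner of $\M$. Start with $(y_n,z_n)_{n\geq 1}$ in $S(\M)\times S(\M)$ satisfying $y_n+z_n=x$ and $\Norm{Ry_n}_1+\Norm{Cz_n}_1\to m_1(x)$. Applying Lemma~$\ref{control:infty}$ with $p=1$ and $\e=1/n$, I may further assume that $\Norm{Ry_n}_{\infty},\Norm{Cz_n}_{\infty}\leq 2\Norm{Gx}_{\infty}$, a quantity finite because $x\in S(\M)$. In particular each entry $y_{n,i}$ and $z_{n,i}$ is uniformly bounded in operator norm.

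Next, let $e=s\p{\Sum{i}{}x_ix_i^*}\vee s\p{\Sum{i}{}x_i^*x_i}$, a finite projection with $x\in(e\M e)^N$, and replace $(y_n,z_n)$ by its compression $\tilde y_{n,i}:=ey_{n,i}e$, $\tilde z_{n,i}:=ez_{n,i}e$. This preserves $\tilde y_n+\tilde z_n=x$ and does not increase the $L_1$-norms: writing $A_n:=\Sum{i}{}y_{n,i}y_{n,i}^*$, the bound $e\leq 1$ gives $\Sum{i}{}ey_{n,i}ey_{n,i}^*e\leq eA_ne$, so combining operator monotonicity of $t\mapsto t^{1/2}$ with the identity $(eA_ne)^{1/2}=\md{A_n^{1/2}e}$,
$$\Norm{R\tilde y_n}_1 \leq \tau\p{(eA_ne)^{1/2}} = \Norm{A_n^{1/2}e}_1 \leq \Norm{A_n^{1/2}}_1 = \Norm{Ry_n}_1,$$
and symmetrically for the columns; so from now on the minimising sequence lies in $(e\M e)^N\times(e\M e)^N$.

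Since $e\M e$ has finite trace, the uniform operator-norm bound translates into boundedness in the Hilbert space $L_2(e\M e)$. A diagonal extraction over the finite index set (together with Eberlein--\v{S}mulian applied to the reflexive space $L_2(e\M e)$) produces a subsequence along which $\tilde y_{n,i}\rightharpoonup y_i$ and $\tilde z_{n,i}\rightharpoonup z_i$ weakly in $L_2(e\M e)$. The constraint $\tilde y_{n,i}+\tilde z_{n,i}=x_i$ passes to the limit, and the uniform $L_\infty$-bound transfers to the weak limit via the duality $L_\infty=(L_1)^*$; hence $y_i,z_i\in e\M e\subset\M_c$.

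Finally, $\Norm{Rw}_1$ is weak-$L_2$-lower semicontinuous on sequences of length $N$, thanks to the dual description
$$\Norm{Rw}_1 = \sup\Big\{\Big|\Sum{i}{}\tau(w_ia_i)\Big|\,:\,(a_i)_{i\leq N}\in\M^N,\ \Norm{\Sum{i}{}a_i^*a_i}_{\infty}\leq 1\Big\},$$
each functional inside being weak-$L_2$-continuous (and analogously for $\Norm{C\cdot}_1$). Therefore
$$\Norm{Ry}_1+\Norm{Cz}_1 \leq \liminf_n\p{\Norm{R\tilde y_n}_1+\Norm{C\tilde z_n}_1} = m_1(x),$$
and the converse inequality is trivial from the definition of $m_1(x)$. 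The main subtlety is guaranteeing that $y_i,z_i\in\M_c$ rather than merely in $L_1(\M)$, which is precisely why the reduction to the finite corner $e\M e$ via the compression-monotonicity estimate is essential before taking weak limits.
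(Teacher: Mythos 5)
Your argument is correct and follows essentially the same route as the paper: a minimising sequence made uniformly bounded in $L_\infty$ via Lemma \ref{control:infty}, compressed into the finite corner $e\M e$, and passed to a weak limit. The only (harmless) variations are that you extract the limit in the Hilbert space $L_2(e\M e)$ and get the norm identity from weak lower semicontinuity of $\Norm{R\cdot}_1$ via its dual description, whereas the paper extracts a weakly convergent subsequence in $L_1(e\M e)$ (using the weak compactness criterion in $L_1$) and upgrades to norm convergence by Mazur's lemma; your explicit check that compression does not increase the $L_1$-norms fills in a step the paper leaves implicit.
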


\begin{proof}
Consider a sequence $(y^{(i)},z^{(i)})_{i\geq 0}$ such that:
$$\lim_{i\to\infty} \Norm{Ry^{(i)}}_1 + \Norm{Cz^{(i)}}_1 = m_1(x),$$ and for all $i\geq 0$, $y^{(i)} + z^{(i)} = x$.
By lemma $\ref{control:infty}$, we can suppose that the $y_{n}^{(i)}$ and $z_{n}^{(i)}$ are uniformly bounded in $L_{\infty}(\M)$. Recall that the sequence $x$ is finite, say of length $N$ and let:
$$e = s(\md{(Rx)^*}) \vee s(\md{Cx}).$$
By considering the sequences $(ey^{(i)}_ne)_{n\leq N}$ and $(ez^{(i)}e)_{n\leq N}$ we can also suppose that $y^{(i)}$ and $z^{(i)}$ are in $(e\M e)^N$.
Since the $y_{n}^{(i)}$ and $z_{n}^{(i)}$ are uniformly bounded in $L_{\infty}(e\M e)$, by the criterion of weak compacity in $L_1$ (\cite{Tak79}), up to extraction, we can suppose that for all $n\geq 0$, the sequence $(y_{n}^{(i)})_{i\geq 0}$ converges weakly in $L_1(e\M e)$ to an element $y_n\in e\M e$ and using Mazur's lemma, taking convex combinations we can even assume the norm-convergence in $L_1(\M)$. 

Let $y = (y_n)_{0\leq n \leq N}$ and $z = (z_n)_{0\leq n\leq N}$. Since the sequences $y$ and $z$ belong to $(e\M e)^N$, they belong to $S(\M)$. Note that $\{ a,b : a+b = x \}$ is closed and convex, so for all $n\geq 0$, $y_n +z_n = x_n$. Moreover, for $0\leq n\leq N$, we have $\lim_{i\to\infty} y^{(i)}_n \otimes e_{1,n} = y_n \otimes e_{1,n}$ in $L_1(e \M e \overline{\otimes} \B(\ell^2))$ and similarly $\lim_{i\to\infty} z^{(i)}_n \otimes e_{n,1} = z_n \otimes e_{n,1}$. Hence, by summing over $n$, we obtain:
 $m_1(x) = \lim_{i\to\infty} \Norm{Ry^{(i)}}_1 + \Norm{Cz^{(i)}}_1 = \Norm{Ry}_1 + \Norm{Cz}_1 $.

\end{proof}

\begin{theorem} \label{thm:factorization}
Let $x\in S(\M)$. There exist $\alpha,\beta \in \M_c^+$ and $u\in S(\M)$ such that $x = \alpha u + u \beta $, $s(\alpha) \leq \md{(Ru)^*} \leq 1$ and $s(\beta) \leq \md{Cu} \leq 1$.
\end{theorem}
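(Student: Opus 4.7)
The argument is a duality proof anchored on the existence of an optimal decomposition (Lemma~\ref{lem:exist_optimal_decomposition}). Fix $y,z \in S(\M)$ with $y+z = x$ and $\Norm{Ry}_1 + \Norm{Cz}_1 = m_1(x)$, chosen with support in $e\M e$ for a single finite projection $e$ and common finite length $N$, as in the proof of that lemma. Set $\alpha := (\sum_n y_n y_n^*)^{1/2}$ and $\beta := (\sum_n z_n^* z_n)^{1/2}$, and use polar decomposition to write $y_n = \alpha v_n$ with $\sum_n v_n v_n^* = s(\alpha)$ and $z_n = w_n \beta$ with $\sum_n w_n^* w_n = s(\beta)$. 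The sequence $u$ promised by the theorem will be neither $v$ nor $w$, but a common dual certificate at the optimum which, after left multiplication by $\alpha$ (resp. right multiplication by $\beta$), recovers $y$ (resp. $z$).

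The crux is the Fenchel-type duality formula
\begin{align*}
m_1(x) = \sup\Bigl\{\Re \sum_n \tau(\nu_n^* x_n) :\; \nu_n \in e\M e,\; \Norm{R\nu}_\infty \leq 1,\; \Norm{C\nu}_\infty \leq 1\Bigr\},
\end{align*}
together with attainment. The inequality $\leq$ is immediate from the $R_1$--$R_\infty$ and $C_1$--$C_\infty$ dualities applied to any decomposition $x=y'+z'$. The reverse inequality and attainment follow from standard finite-dimensional convex duality applied to the truncated problem: the constraint set is compact and the objective is linear. Complementary slackness at the optimum $(y,z,\nu)$ then forces $\sum_n \tau(\nu_n^* y_n) = \Norm{Ry}_1 = \tau(\alpha)$ and $\sum_n \tau(\nu_n^* z_n) = \Norm{Cz}_1 = \tau(\beta)$.

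These scalar equalities promote to operator identities. Cyclicity of the trace rewrites $\sum_n \tau(\nu_n^* y_n) = \tau(X\alpha)$ with $X := \sum_n v_n \nu_n^*$, and the identification $X \otimes e_{1,1} = (Rv)(R\nu)^*$ gives $\Norm{X}_\infty \leq \Norm{Rv}_\infty \Norm{R\nu}_\infty \leq 1$. The equality case of $|\tau(X\alpha)| \leq \Norm{X}_\infty \tau(\alpha)$ then forces $X\alpha = \alpha$ (and so $\alpha X^* = \alpha$). Expanding the nonnegative quantity $\sum_n \Norm{\alpha(v_n - \nu_n)}_2^2$ and using $X\alpha = \alpha$, $\sum_n v_n v_n^* = s(\alpha)$ and $A := \sum_n \nu_n \nu_n^* \leq 1$, the expression collapses to $\tau(\alpha A \alpha) - \tau(\alpha^2) \leq 0$. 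Hence each $\alpha(v_n - \nu_n) = 0$ and $y_n = \alpha v_n = \alpha \nu_n$. The mirror computation on the column side gives $z_n = \nu_n \beta$, and setting $u := \nu$ yields $x_n = \alpha u_n + u_n \beta$.

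The support conditions follow from the same equalities. One has $|(Ru)^*|^2 = A \leq 1$ and $|Cu|^2 \leq 1$ by construction. Equality in the previous step also forces $\tau(\alpha A\alpha) = \tau(\alpha^2)$, which combined with $\alpha A \alpha \leq \alpha^2$ and positivity yields $\alpha A \alpha = \alpha^2$; hence $p A p = p$ with $p := s(\alpha)$. Since $A \leq 1$, this gives $A p = p$, and then $(1-p)A(1-p) = A - p \geq 0$, so $A \geq s(\alpha)$ and $|(Ru)^*| \geq s(\alpha)$ by operator monotonicity of the square root. The column case is identical. The main obstacle is setting up the duality formula with attainment, but the reduction to the finite algebra $e\M e$ sidesteps any delicate point about non-reflexive Banach spaces.
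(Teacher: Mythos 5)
Your proof is correct and follows essentially the same route as the paper's: existence of an optimal $L_1$-decomposition, polar factorizations $y=\alpha v$, $z=w\beta$, a norming element in the unit ball of $R_\infty\cap C_\infty$ obtained by duality, and saturation of the resulting chain of inequalities. The only (harmless) variation is at the equality-case step, where you expand $\sum_n\Norm{\alpha(v_n-\nu_n)}_2^2\leq 0$ to get $y=\alpha\nu$ and deduce the support bound from $\alpha A\alpha=\alpha^2$, whereas the paper invokes the equality case of Cauchy--Schwarz in $L_2(\M\overline{\otimes}\B(\ell^2))$ and an orthogonality computation; both yield the same conclusion.
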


\begin{proof}
Using the same notations as in the previous proof, we can work in $(e\M e)^{N}$ which garantees that all sequences considered are in $S(\M)$ and operators are finitely supported. Let $y,z\in S(\M)$ be the elements given by lemma \ref{lem:exist_optimal_decomposition} i.e $y + z = x$ and $\Norm{Ry}_1 + \Norm{Cz}_1 = m_1(x)$. Denote $\alpha = \md{(Ry)^*}$, $e = s(\alpha)$, $\beta = \md{Cz}$ and $f = s(\beta)$. Write $y = \alpha v$, $v\in S(\M)$ such that $\alpha Rv$ is a polar decomposition of $Ry$, in particular $v$ can be chosen such that $e v = v$. Similarly, $z = w \beta$ with $w\in S(\M)$ and $wf = w$.

By duality, there exists an element $u\in (e\M e)^N$ such that $\Norm{u}_{R_{\infty}\cap C_{\infty}} = 1$ and 
$$\tau(\Sum{i\geq 0}{} u_i^*x_i) = \Norm{x}_{R_1 + C_1} = m_1(x).$$
Let us rewrite the previous equality with the notations introduced previously:
\begin{align*}
m_1(x) &= \tau\big(\Sum{i\geq 0}{} u_i^*(\alpha v_i + w_i \beta)\big) \\
\tau(\alpha) + \tau(\beta) &= \tau\big(\Sum{i\geq 0}{} \alpha v_i u_i^* e\big) + \tau\big(\Sum{i\geq 0}{} f u_i^* w_i \beta\big)\\
\intertext{taking the real part on both sides of the equality, we get:}
\tau(\alpha) + \tau(\beta) &= \tau\big(\Re(\Sum{i\geq 0}{} \alpha v_i u_i^* e)\big) + \tau\big(\Re(\Sum{i\geq 0}{} f u_i^* w_i \beta)\big) \\
\intertext{and by the tracial property of $\tau$:}
\tau(\alpha) + \tau(\beta) &= \tau\big(\alpha\Re(\Sum{i\geq 0}{} v_i u_i^* e)\big) + \tau\big(\Re(\Sum{i\geq 0}{} f u_i^* w_i)\beta)\big).\\
\end{align*} 
Moreover, 
$$\Norm{\Re(\Sum{i\geq 0}{} v_i u_i^* e)}_{\infty} = \Norm{\Re(R(v)C(u^*e))}_{\infty} \leq 1$$
and since $ev = v$:
$$s(\Re(\Sum{i\geq 0}{} v_i u_i^* e)) = s(\Re(e(\Sum{i\geq 0}{} v_iu_i^*)e)) = s(e\Re (\Sum{i\geq 0}{} v_iu_i^*)e) \leq e$$
so $\Re(\Sum{i\geq 0}{} v_i u_i^* e) \leq e$. Similarly, since $wf = w$, $\Re(\Sum{i\geq 0}{} f u_i^* w_i) \leq f.$
Hence, we must have 
$$\Re(\Sum{i\geq 0}{} v_i u_i^* e) = e.$$
This means that
$$\md{\tau(R(v)C(u^*e))} = \Norm{R(v)}_2\Norm{C(u^*e)}_2.$$ 
There is equality in the Cauchy-Schwarz inequality in $L_2(\M \overline{\otimes} \B(\ell^2))$, so there exists $\lambda \in\Cb$, $eu = \lambda v$. The only possibility is that $e u = v$. Hence, $\alpha u = \alpha e u = \alpha v = y$. Similarly, $u \beta = z$. Therefore, $x = \alpha u + u \beta$. Let us now verify the other required properties. First, since $\Norm{u}_{R_{\infty}\cap C_{\infty}} = 1$, $\md{(Ru)^*} \leq 1$. Secondly, since $e u = v$, $e = \md{(R(eu))^*}$ and note that $\Norm{Ru}_{\infty} \leq 1$ implies that $\Norm{e(Ru)(Ru)^*}_2^2 \leq \tau(e)$. Moreover, by orthogonality,$\Norm{e(Ru)(Ru)^*}_2^2 = \Norm{e(Ru)(Ru)^*e}_2^2 + \Norm{e(Ru)(Ru)^*(1-e)}_2^2.$ Hence, $\tau (e) \geq \tau(e) + \Norm{e(Ru)(Ru)^*(1-e)}_2^2$, which means that $e(Ru)^*(Ru)(1-e) = 0$. Symmetrically, $(1-e)(Ru)(Ru)^*e = 0$ so 
$$(Ru)(Ru)^* - e = (Ru)(Ru)^* - e(Ru)(Ru)^*e = (1-e)(Ru)^*(Ru)(1-e) \geq 0.$$
By adjunction, we obtain similar inequalities for columns.
\end{proof}

We are now going to show that we can obtain Khintchine-type inequalities in a very general sense from the factorization found above. We will need the following inequality which is proved, up to some classical techniques using the Cayley transform in proposition $4.3$ of \cite{Ric18}. 

\begin{lemma} \label{lem:ineq_commutator}
Let $p\in (0,\infty)$, $q\in (0,\infty]$, $\theta \in (0,1)$. For all $t>0$, $\alpha,\beta \in \M_c^{+}$ and $b\in\M$ :
$$K_{t^\theta}(\alpha^{\theta}b + b\beta^{\theta},p/\theta,q/\theta) \les \bbox{K_t(\alpha b + b \beta ,p,q)}^{\theta}\Norm{b}_{\infty}^{1-\theta},$$
where the implicit constant only depends on $p,q$ and $\theta$.
\end{lemma}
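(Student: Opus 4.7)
My plan is to deduce this $K$-functional inequality from the norm version of the same Hölder bound for anticommutators, which is essentially Proposition 4.3 of \cite{Ric18}. That result provides, for every $r \in (0,\infty)$, $\alpha, \beta \in \M_c^+$, and $b \in \M$, the endpoint estimate
\[
\Norm{\alpha^\theta b + b\beta^\theta}_{r/\theta} \les_{r,\theta} \Norm{\alpha b + b\beta}_r^\theta \Norm{b}_\infty^{1-\theta}.
\]
Ricard's argument uses the Cayley transform to replace $\alpha, \beta$ by bounded unitary-like substitutes $(i-\alpha)(i+\alpha)^{-1}, (i-\beta)(i+\beta)^{-1}$ and then runs Hadamard's three-lines lemma on the analytic family $F(z) = \alpha^z b + b\beta^z$: on the line $\Re(z)=0$ one has $\Norm{F(iy)}_\infty \leq 2\Norm{b}_\infty$, while on $\Re(z)=1$ the values are controlled in $L_r$ by $\Norm{\alpha b + b\beta}_r$ (this is where the Cayley substitution does the analytic work), so evaluating at $z=\theta$ yields the conclusion.

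To lift this norm endpoint to the two-space $K$-functional statement, I would start with a nearly optimal decomposition $\alpha b + b\beta = y_1 + y_2$ realising $K_t(\alpha b + b\beta, p, q)$ up to constants, and construct a matching decomposition $b = b_1 + b_2$ such that $\Norm{\alpha b_i + b_i\beta}_{r_i} \les \Norm{y_i}_{r_i}$ and simultaneously $\Norm{b_i}_\infty \les \Norm{b}_\infty$, with $r_1 = p$ and $r_2 = q$. The construction is a smooth spectral cutoff of the positive operator $T := L_\alpha + R_\beta$ on $L_2(\M)$: choose a scale $s$ matched to the splitting of $Tb = \alpha b + b\beta$ and set $b_1 = \varphi_s(T)b$, $b_2 = (1-\varphi_s)(T)b$ for a resolvent-based cutoff $\varphi_s$. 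The same Cayley transform machinery used by Ricard, now applied to $T$, shows that $\varphi_s(T)$ extends boundedly to $L_\infty(\M)$ with a constant independent of $s$, giving the required $L_\infty$ bound on each $b_i$. Applying Ricard's norm estimate to each piece yields $\Norm{\alpha^\theta b_i + b_i\beta^\theta}_{r_i/\theta} \les \Norm{y_i}_{r_i}^\theta \Norm{b}_\infty^{1-\theta}$, and combining with the elementary concavity inequality $a^\theta + s^\theta c^\theta \leq 2^{1-\theta}(a+sc)^\theta$ (valid for $a,c,s \geq 0$, $\theta \in (0,1)$) produces
\[
K_{t^\theta}(\alpha^\theta b + b\beta^\theta, p/\theta, q/\theta) \les \bigl(\Norm{y_1}_p + t\Norm{y_2}_q\bigr)^\theta \Norm{b}_\infty^{1-\theta}.
\]
Taking the infimum over decompositions of $\alpha b + b\beta$ then yields the lemma.

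The main obstacle is the construction of the matched decomposition $b = b_1 + b_2$: neither plain spectral projections of $T$ nor the naive inverse $b_i = T^{-1}y_i$ (when it makes sense) gives any $L_\infty$-control, because $T$ has unbounded spectrum and its spectral projections fail to be $L_\infty$-bounded. It is precisely the Cayley transform $(T-i)(T+i)^{-1}$---a bounded unitary on $L_2$ whose Riesz/Dunford functional calculus extends boundedly to $L_\infty$---that furnishes the framework in which the cutoffs $\varphi_s(T)$ enjoy the necessary operator bounds, and this is the ``classical technique'' the paper alludes to for bridging Ricard's single-space statement to the two-space $K$-functional form.
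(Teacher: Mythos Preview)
Your reading of Ricard's Proposition~4.3 is the source of the difficulty. In the paper's setup (see Proposition~\ref{prop:ando8}), Ricard's result is \emph{already} a $K$-functional estimate, but for Mazur maps rather than anti-commutators: for self-adjoint $x,y$ and $f(t)=|t|^\theta$ or $f(t)=\mathrm{sgn}(t)|t|^\theta$,
\[
K_{t^\theta}(f(x)-f(y),p/\theta,q/\theta)\ \les\ K_t(x-y,p,q)^\theta.
\]
So there is no need to ``lift'' a single-norm inequality to the two-space $K$-functional; the $K$-functional is there from the start. The ``classical techniques using the Cayley transform'' alluded to are not a device for passing from norms to $K$-functionals, but for passing from \emph{differences} $f(x)-f(y)$ to \emph{commutators} and then to \emph{anti-commutators}. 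Concretely: one applies the Cayley transform to the bounded element $b$ (not to $\alpha,\beta$), obtaining a unitary $u=(b+i)(b-i)^{-1}$; then $b f(\alpha)-f(\alpha)b$ is rewritten, up to bounded factors, as $f(u^*\alpha u)-f(\alpha)$, to which Ricard's estimate applies directly. A $2\times 2$ matrix trick then converts the commutator statement (self-adjoint $\alpha$, self-adjoint $b$) into the anti-commutator statement for $\alpha,\beta\ge 0$ and general $b$.

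Your alternative route has a genuine gap. The decomposition $b=b_1+b_2$ via smooth spectral cutoffs $\varphi_s(T)$ of $T=L_\alpha+R_\beta$ hinges on the claim that $\varphi_s(T)$ is bounded on $L_\infty(\M)$ uniformly in $s$. That is a noncommutative multiplier statement for the functional calculus of $L_\alpha+R_\beta$, and it does not follow from the Cayley transform of $T$ alone: $(T-i)(T+i)^{-1}$ is indeed a contraction on $L_2$, but its boundedness on $L_\infty$ (let alone that of general smooth functions of $T$) is precisely the hard analytic content you would need to supply, and in general such bounds fail without additional structure. Even granting it, you would still need to show $\|\alpha b_i+b_i\beta\|_{r_i}\les\|y_i\|_{r_i}$, which requires relating $T\varphi_s(T)b$ to the given pieces $y_i$; this is not automatic from a spectral cutoff chosen only by a scale $s$. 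In short, the paper avoids all of this by starting from Ricard's $K$-functional Mazur-map bound and using the Cayley transform on $b$; your plan instead attempts to reprove a harder operator-theoretic boundedness that is not available.
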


An argument of how to deduce the previous lemma from \cite{Ric18} is given in the last section of this paper (see \ref{prop:ando8}, \ref{lem:cayleytransform} and the proof following right after). 

\begin{theorem} \label{thm:equivalence_Kt_alpha}
Let $x\in S(\M)$, $\alpha,\beta \in \M_c^+$ and $u\in S(\M)$ such that $x = \alpha u + u \beta $, $s(\alpha) \leq \md{(Ru)^*} \leq 1$ and $s(\beta) \leq \md{Cu} \leq 1$. Then, for all $t>0$ and $p>0$:
$$ K_t(\alpha,p,\infty) + K_t(\beta,p,\infty) \approx_c K_t(Gx,p,\infty),$$
where $c$ only depends on $p$ and $(\xi_i)$ (more precisely on the constant $c_{\infty}$ appearing in \eqref{equa:ineqKhininfty}).
\end{theorem}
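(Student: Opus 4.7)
The plan is to prove the $K$-functional estimate
\[
K_t(\alpha,p,\infty) + K_t(\beta,p,\infty) \approx_c K_t(Gx,p,\infty) \qquad (p\in(0,\infty),\ t>0),
\]
which, via Proposition \ref{prop:kfuncandinterpolation}, upgrades to the statement for any interpolation space $E$ between $L_p$ and $L_\infty$. The upper bound on $Gx$ is straightforward: pick near-optimal $K$-decompositions $\alpha=a_0+a_1$, $\beta=b_0+b_1$, split $Gx = (a_0Gu + Gub_0) + (a_1Gu + Gub_1)$, and bound each summand by H\"older together with $\Norm{Gu}_\infty \lesssim c_\infty$ from \eqref{equa:ineqKhininfty}.

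The substantive direction is the lower bound. The key input is a duality identity coming from the orthonormality of $(\xi_i)$ in $L_2(\A)$: since $\tau_\A(\xi_i\xi_j^*) = \delta_{ij}$,
\[
E_\M\bigl(Gx \cdot (Gu)^*\bigr) = \Sum{i}{} x_iu_i^* = \alpha\Sum{i}{}u_iu_i^* + \Psi_1(\beta), \qquad \Psi_1(y) := \Sum{i}{}u_iyu_i^*.
\]
The hypothesis $s(\alpha) \leq \md{(Ru)^*} \leq 1$ forces $\md{(Ru)^*}$ to act as the identity on the range of $s(\alpha)$ (any positive contraction dominating a projection coincides with it there), which gives $\alpha\sum u_iu_i^* = \alpha$ and therefore the clean identity $\alpha + \Psi_1(\beta) = E_\M(Gx\cdot(Gu)^*)$; a symmetric computation yields $\Psi_2(\alpha) + \beta = E_\M((Gu)^*\cdot Gx)$ with $\Psi_2(y) := \sum u_i^*yu_i$. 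For $p\geq 1$ this closes the argument: $E_\M$ is an $L_p$- and $L_\infty$-contraction and $0\leq\alpha\leq\alpha+\Psi_1(\beta)$ gives $\mu_s(\alpha) \leq \mu_s(\alpha+\Psi_1(\beta))$, so
\[
K_t(\alpha,p,\infty) \leq K_t\bigl(E_\M(Gx(Gu)^*),p,\infty\bigr) \leq K_t\bigl(Gx(Gu)^*,p,\infty\bigr) \lesssim K_t(Gx,p,\infty),
\]
with the same bound for $\beta$ via the mirror identity.

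For $0<p<1$ the conditional-expectation argument breaks down, since $E_\M$ is not $L_p$-bounded in general; this is precisely where Ricard's anticommutator inequality (Lemma \ref{lem:ineq_commutator}) intervenes. Applied with $b=Gu$, $\theta=p$, $q=\infty$ and using $\Norm{Gu}_\infty \lesssim c_\infty$, it yields
\[
K_{t^p}\bigl(\alpha^pGu + Gu\beta^p,\,1,\,\infty\bigr) \lesssim K_t(Gx,p,\infty)^p.
\]
Setting $\tilde x := \alpha^p u + u\beta^p$, one has $G\tilde x = \alpha^pGu + Gu\beta^p$; and since $s(\alpha^p)=s(\alpha)$, the triple $(\alpha^p,\beta^p,u)$ still satisfies the hypotheses of the theorem, so the $p=1$ case just established applies to $\tilde x$ and gives
\[
K_{t^p}(\alpha^p,1,\infty) + K_{t^p}(\beta^p,1,\infty) \lesssim K_{t^p}(G\tilde x,1,\infty) \lesssim K_t(Gx,p,\infty)^p.
\]
The power theorem (Proposition \ref{prop:powertheorem}) $K_{t^p}(\alpha^p,1,\infty) \approx K_t(\alpha,p,\infty)^p$ then concludes. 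The main obstacle is exactly this $p<1$ regime: Ricard's sharp bound is what allows the $p$-th power trick to reduce the whole range $p\in(0,\infty)$ to the already-handled $L_1$ case.
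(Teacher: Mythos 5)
Your proof is correct and follows essentially the same route as the paper: the upper bound via $\Norm{Gu}_\infty \les c_\infty$, and the lower bound by applying Lemma \ref{lem:ineq_commutator} with $\theta=p$, multiplying by $(Gu)^*$, using the conditional expectation $Id\otimes\tau_\A$ together with the support conditions $\alpha\Sum{i}{}u_iu_i^*=\alpha$, and finishing with the power theorem. Packaging the $p<1$ case as an application of the $p=1$ case to the triple $(\alpha^p,\beta^p,u)$ is only a cosmetic reorganization of the paper's inline argument.
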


\begin{proof}
\emph{Upper estimate.} Let $p>0$ and $t>0$. Note that $Gx = \alpha (Gu)+ (Gu)\beta$ where $\alpha$ is identified with $\alpha \otimes 1$ and $\beta$ with $\beta \otimes 1$ in $\M\overline{\otimes}\A$. By the Khintchine inequalities in $L_{\infty}$ (i.e. estimate \eqref{equa:ineqKhininfty}), since $\Norm{u}_{R_{\infty} \cap C_{\infty}} \leq 1$ we have $\Norm{Gu}_{\infty} \leq c_{\infty}$. Hence,
\begin{align*}
K_t(Gx,p,\infty) &\les K_t(\alpha (Gu),p,\infty) + K_t((Gu)\beta,p,\infty) \\
&\leq c_{\infty} (K_t(\alpha,p,\infty) + K_t(\beta,p,\infty)).
\end{align*}
\emph{Lower estimate.} Let $t>0$. We only prove the theorem for $p<1$, to obtain the result for $p \geq 1$ it suffices to take $\theta = 1$ in the argument.
Let us rewrite lemma \ref{lem:ineq_commutator}, with $\theta = p$, $q=\infty$:
$$K_{t^p}(\alpha^p(Gu) + (Gu)\beta^p ,1,\infty) \les \bbox{K_t(Gx,p,\infty)}^p \Norm{Gu}_{\infty}^{1-p}.$$
With this inequality, we can conclude without too much effort. Indeed:
$$K_{t^p}(\alpha^{p}(Gu) + (Gu)\beta^{p},1,\infty) \geq \Norm{Gu}_{\infty}^{-1}K_{t^p}(((Gu)\beta^p + \alpha^p(Gu))(Gu)^*,1,\infty).$$
By \eqref{equa:ineqKhininfty}, $\Norm{Gu}_{\infty} \les \Norm{u}_{R_{\infty} \cap C_{\infty}} = 1$. Let us now consider the conditional expectation $Id \otimes \tau_{\A}$.
\begin{align*}
(Id \otimes \tau_{\A})(((Gu)\beta^p + \alpha^p(Gu))(Gu)^*) &= (Id \otimes \tau_{\A})(\Sum{i,j\geq 0}{} u_i\beta^pu_j^* \otimes \xi_i\xi_j^* + \Sum{i,j\geq 0}{} \alpha^pu_iu_j^* \otimes \xi_i\xi_j^*)\\
&= \Sum{i=0}{\infty} u_i\beta^{p}u_i^* + \Sum{i=0}{\infty} \alpha^p u_iu_i^*.
\end{align*}
As a conditional expectation, $Id \otimes \tau_{\A}$ is bounded on $L_1$ and $L_{\infty}$ so
\begin{align*}
K_{t^p}(((Gu)\beta^p + \alpha^p(Gu))(Gu)^*,1,\infty) &\gtrsim K_{t^p}(\Sum{i=0}{\infty} u_i\beta^{p}u_i^* + \alpha^{p}(\Sum{i=0}{\infty} u_iu_i^*),1,\infty), \\
\intertext{note that $\Sum{i=0}{\infty} u_i\beta^{p}u_i^* \geq 0$, and that since $s(\alpha) \leq \Sum{i=0}{\infty} u_iu_i^* \leq 1$, we get $\alpha^{p}(\Sum{i=0}{\infty} u_iu_i^*) = \alpha^p \geq 0$. Hence,}
K_{t^p}(\Sum{i=0}{\infty} u_i\beta^{p}u_i^* + \alpha^{p}(\Sum{i=0}{\infty} u_iu_i^*),1,\infty) &\gtrsim K_{t^p}(\alpha^{p},1,\infty) \gtrsim K_t(\alpha,p,\infty)^{p},
\end{align*}
where we used the power theorem (proposition \ref{prop:powertheorem}) for the last inequality. The same tricks work for $\beta$ by multiplying $Gx$ by $(Gu)^*$ on the left.
\end{proof}

\begin{remark}
Theorem \ref{thm:intro:alpha} claimed in the introduction is obtained by a combination of the two previous theorems and the characterisation of interpolation spaces between $L_p$-spaces given by proposition \ref{prop:kfuncandinterpolation}.
\end{remark}

\begin{remark}
If we start with an $x\in S(\M)$ such that for all $i\geq 0$, $x_i = x_i^*$, the factorisation given by theorem \ref{thm:factorization} takes the following form. There exists $\alpha \in \M^+$ and $u\in S(\M)$ such that
\begin{itemize}
\item $x = u\alpha + \alpha u$,
\item for all $i\geq 0$, $u_i = u_i^*$,
\item $s(\alpha) \leq \Sum{i\geq 0}{} u_i^2 \leq 1$.
\end{itemize}
\end{remark}

\begin{remark}
From the results of this section, it is easy to recover the upper and lower Khintchine inequalities. More precisely, if $E$ is an interpolation space between $L_p$-spaces, $p\in (0,\infty]$ and $x\in S(\M)$:
$$\Norm{x}_{R_E + C_E} \les \Norm{Gx}_{E(\M_{\A})} \les \Norm{x}_{R_E \cap C_E}.$$
\end{remark}

\begin{proof}
Let $x\in S(\M)$. The left hand side inequality is obtained directly by considering the decomposition $y = \alpha u$ and $z = u \beta$ and applying the lower estimate in theorem \ref{thm:intro:alpha}.

To show the right hand side inequality, we make a computation similar to what appeared in the proof of theorem \ref{thm:equivalence_Kt_alpha}. First, using again theorem \ref{thm:intro:alpha}, we know that 
$$\Norm{Gx}_{E(\M_{\A})} \les \max (\Norm{\alpha}_{E(\M)}, \Norm{\beta}_{E(\M)}).$$
Moreover, note that 
$$\Norm{Rx}_{E(\M_{\B})} \gtrsim \Norm{Rx(Ru)^*}_{E(\M_{\B})} = \Norm{\alpha + (Ru)\beta(Ru)^*}_{E(\M_{\B})} \geq \Norm{\alpha}_{E(\M)}.$$
By adjunction, we also obtain
$$\Norm{Cx}_{E(\M_{\B})} \gtrsim \Norm{\beta}_{E(\M)}.$$
Hence,
$$\Norm{Gx}_{E(\M_{\A})} \les \max \p{\Norm{Rx}_{E(\M_{\B})}, \Norm{Cx}_{E(\M_{\B})}}.$$
\end{proof}

\section{A remark about martingale inequalities} \label{section:martingale inequalities}

In this section, we recover a variant of a result first proved in \cite{Narcisse2015} on martingale inequalities. The novelty compared to the original noncommutative martingale inequalities (\cite{PisXu97}) is that we show that the decomposition appearing for $1<p<2$ can be chosen to be independent of $p$. Note that this result has also been obtained, using a constructive approach, in a recent preprint (\cite{JiaRanWuZhu19}). The setting is the following, let $\F = (\M_n)_{n\geq 0}$ be a filtration on $\M$, and assume that for all $n\in\Nb$, the conditional expectation $\E_n: \M \to \M_n$ exists. Denote by $\Minf := \cup_{n\geq 0} \M_n$ the set of finite bounded martingales for the filtration $\F$. For any $x$ in $\Minf$, denote by $dx$ the associated martingale differences. 

\begin{theorem}
There exist constants $(k_p)_{p>1}$ such that for any $x\in\Minf$, there exists $y$ and $z\in\Minf$ such that $x = y + z$ and for all $p>1$:
$$\Norm{R(dy)}_p + \Norm{C(dz)}_p \leq k_p\Norm{x}_p.$$

\end{theorem}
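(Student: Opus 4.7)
The plan is to apply theorem~\ref{thm:intro:lower} to the sequence $dx=(dx_n)$ of martingale differences, using Rademacher variables $\xi_n$ as the reference system, and then to project the resulting decomposition onto adapted martingale differences by means of conditional expectations, invoking the noncommutative Stein inequality to preserve the row/column norms. The attractive feature of theorem~\ref{thm:intro:lower} is exactly what is needed here: one single decomposition works across all interpolation spaces between $L_{p_0}$ and $L_\infty$, hence in every $L_q$ with $q>p_0$, which is precisely $p$-independence.

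Fix $p_0\in(0,1)$; Rademacher variables satisfy the hypothesis~\eqref{Kh(p)} in $L_{p_0}$ by Pisier-Ricard. For $x\in\Minf$ (reducing to $dx\in S(\M)$ by a compression of the kind used in $\S\ref{section:noncommutative khintchine inequalities}$), theorem~\ref{thm:intro:lower} produces sequences $dy,dz\in S(\M)$ with $dy+dz=dx$ and, for every $q>p_0$,
\[
\Norm{R(dy)}_q+\Norm{C(dz)}_q\les_q \Norm{G(dx)}_q.
\]
For $q>1$ the noncommutative Burkholder-Gundy-Pisier-Xu inequality combined with Khintchine in $L_q$ (both reduce $\Norm{\cdot}_{\Hs_q}$ to $R_q+C_q$ or $R_q\cap C_q$ on martingale differences) gives $\Norm{G(dx)}_q\les_q\Norm{x}_q$, so one obtains
\[
\Norm{R(dy)}_q+\Norm{C(dz)}_q\les_q \Norm{x}_q, \qquad q>1.
\]
To turn $dy,dz$ into actual martingale differences, set $dy'_n:=\E_n(dy_n)-\E_{n-1}(dy_n)$ and $dz'_n:=\E_n(dz_n)-\E_{n-1}(dz_n)$ (with $\E_{-1}=0$). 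These lie in $\M_n$ with vanishing $\E_{n-1}$-conditional expectation, and since $dx_n$ is itself a martingale difference, $dy'_n+dz'_n=\E_n(dx_n)-\E_{n-1}(dx_n)=dx_n$. Hence the corresponding martingales $y,z\in\Minf$ satisfy $x=y+z$. The noncommutative Stein inequality of Pisier-Xu asserts that the diagonal conditional expectation $(a_n)\mapsto(\E_n(a_n))$ is bounded on $R_q$ and on $C_q$ uniformly for each fixed $q>1$, so by the triangle inequality
\[
\Norm{R(dy')}_q\les_q \Norm{R(dy)}_q,\qquad \Norm{C(dz')}_q\les_q \Norm{C(dz)}_q,
\]
yielding the desired bound with $k_q$ depending only on $q$.

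The main obstacle is really the interplay between the two ingredients: theorem~\ref{thm:intro:lower} guarantees a single decomposition good for every $L_q$, but it lives at the level of sequences in $S(\M)$ and not of martingales, while Stein's inequality is what allows the passage between the two without losing the uniform estimate. The traditional duality-based proof of Burkholder's inequality in $L_p$ for $p<2$ produces a decomposition depending on $p$, precisely because duality is applied at each level $p$ separately; here the $p$-independence is baked in from the start through the $K$-functional control of theorem~\ref{Main_1}. A minor technical point is to ensure that $dx$ can legitimately be treated as an element of $S(\M)$ when the $\M_n$ are not tracially finite, which is handled by the standard compression argument.
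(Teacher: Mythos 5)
Your proposal is correct and follows essentially the same route as the paper: apply Theorem \ref{Main_1} (via Theorem \ref{thm:intro:lower}) to $dx$ to get a single $p$-independent decomposition, project back onto martingale differences with $\Delta_n=\E_n-\E_{n-1}$, and control the result by Stein's inequality together with the equivalence $\Norm{G(dx)}_p\approx\Norm{x}_p$ for Rademacher variables. The only cosmetic differences are that the paper invokes the theorem at $p=1$ and cites unconditionality of martingale differences directly, where you work at $p_0<1$ and route through Burkholder--Gundy plus Khintchine; both are valid.
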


\begin{proof}
By theorem $\ref{Main_1}$ applied for $p=1$, there exists $y'$ and $z'\in S(\M)$ such that $dx = y' + z'$ and for all $t\geq 0$, $K_t(Ry', 1, \infty) \leq C_1 K_t(Gx, 1, \infty)$ and $K_t(Cz', 1, \infty) \leq C_1 K_t(Gx, 1, \infty)$. By real interpolation, this means that for all $p>1$, $\Norm{Ry'}_p + \Norm{Cz'}_p \leq 2C_1 \Norm{Gx}_p$. Define $\Delta_0 = \E_0$ and for $n\geq 1$, $\Delta_n = \E_n - \E_{n-1}$. Let $dy = (\Delta_n(y'_n))_{n\geq 0}$ and $dz = (\Delta_n(z'_n))_{n\geq 0}$. This way $y$ and $z$ belong to $\Minf$ and they keep the same properties. Indeed, for all $n\in\Nb$, $dx_n = \Delta_n(dx_n) = \Delta_n(y'_n + z'_n) = dy_n + dz_n$ and by Stein's inequality (\citep{PisXu97}, \cite{Xu_notes}):
$$\Norm{R(dy)}_p + \Norm{C(dz)}_p \les \Norm{Ry'}_p + \Norm{Cz'}_p \les \Norm{G(dx)}_p.$$
Moreover, by considering the $\xi_i$ to be independent Rademacher variables and the unconditionality of martingale differences \cite{PisXu97}, $\Norm{G(dx)}_p \approx \Norm{x}_p$. Hence, there exists a constant $k_p$ (independant of $x$) such that:
$$\Norm{R(dy)}_p + \Norm{C(dz)}_p \leq k_p \Norm{x}_p.$$
\end{proof}

\begin{remark}
Since by interpolation, Burkholer-Gundy inequality stays true in all interpolation space $E$ between $L_p$-spaces, $\infty>p>1$, the argument above can be reproduced and the decomposition $y,z$ constructed in the proof verifies 
$$\Norm{R(dy)}_E + \Norm{C(dz)}_E \leq k_E \Norm{x}_E.$$
\end{remark}

\section{Counterexamples in $L_{2,\infty}$} \label{section:counterexample}

To complete the study of Khintchine inequalities in the motivating example of $L_{2,\infty}$, we provide a way to construct counterexamples and thus prove proposition \ref{prop:counterexample}. Explicit constructions can be made, but here we will use the Schur-Horn theorem which produces effortlessly a wide variety of examples for which the distributions of $Gx$ and $Rx$ can be prescribed.  In this section, we only consider $\M = \B(\ell^2)$ and the $\xi_i$ to be free Haar unitaries or Rademacher variables to make the computations explicit. 

A finite sequence $a = (a_1,...,a_n)$ will be identified with the infinite sequence $a = (a_1,...,a_n,0,0,0,...)$. To any sequence $a$ we associate the function 
$$f_a := \Sum{i=1}{\infty} a_i\Ind_{(i-1,i]}.$$

\begin{property}\label{prop:useSchurHorn}
Let $a$ and $b$ be two finite nonincreasing sequences of positive reals such that for all $n\in\Nb$,
$$\Sum{i=1}{n} a_i^2 \geq \Sum{i=1}{n} b_i^2\ \text{and}\ \Sum{i=1}{\infty} a_i^2 = \Sum{i=1}{\infty} b_i^2.$$
Then, there exists $x\in S(\M)$ such that
\begin{center}
$\mu(Gx) = \mu(Cx) = f_a$ and $\mu(Rx) = f_b$.
\end{center}
\end{property}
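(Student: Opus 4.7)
The plan is to use the ansatz $x_i = e_{i,i}\alpha$ for $i = 1, \ldots, n$, where $n$ is the common length of $a$ and $b$ (padding by zeros if needed), the $e_{i,i}$ are the diagonal rank-one projections in the standard basis of $\ell^2$, and $\alpha \in \M_c^+$ is still to be chosen. The virtue of this ansatz is that the three quantities we need to control decouple cleanly: the column sum $\sum_i x_i^* x_i$ will reduce to $\alpha^2$, the row sum $\sum_i x_i x_i^*$ will be the diagonal of $\alpha^2$ in the standard basis, and $Gx$ will factor as $U(\alpha \otimes 1)$ with $U := \sum_i e_{i,i} \otimes \xi_i$ a partial isometry whose initial and final projection both equal $P \otimes 1$, where $P := \sum_{i=1}^n e_{i,i}$.

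The choice of $\alpha$ is dictated by the Schur--Horn theorem. The hypothesis on $a, b$ is exactly the statement that the nonincreasing sequence $(a_i^2)$ majorizes $(b_i^2)$ (with equal total sum), so Schur--Horn furnishes a positive $n \times n$ matrix $M$ with eigenvalues $(a_i^2)$ and diagonal entries $(b_i^2)$ in this order. Viewing $M$ as an element of $\M = \B(\ell^2)$ supported on the range of $P$, I would set $\alpha := M^{1/2} \in \M_c^+$, which has eigenvalues $(a_i)$ and satisfies $\alpha^2 = M$ with diagonal $(b_i^2)$.

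Setting $x_i := e_{i,i}\alpha$ gives $x \in S(\M)$, and using $P\alpha = \alpha$ the verifications are immediate: $\sum_i x_i^* x_i = \alpha P \alpha = \alpha^2$, so $\mu(Cx) = f_a$; and $\sum_i x_i x_i^* = \sum_i e_{i,i}\alpha^2 e_{i,i} = \mathrm{diag}(b_1^2, \ldots, b_n^2)$, so $\mu(Rx) = f_b$. For $Gx = U(\alpha \otimes 1)$, the key observation, which makes the argument uniform for the two cases of $\xi_i$, is that $\xi_i^*\xi_i = 1$ holds for both free Haar unitaries and Rademacher variables; combined with pairwise orthogonality of the $e_{i,i}$, this gives $U^*U = \sum_i e_{i,i} \otimes \xi_i^*\xi_i = P \otimes 1$. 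Hence $|Gx|^2 = (\alpha \otimes 1)(P \otimes 1)(\alpha \otimes 1) = \alpha^2 \otimes 1$, and since the trace on $\A$ is normalized, $\mu(Gx) = \mu(\alpha) = f_a$. I do not expect any substantive obstacle; the Schur--Horn theorem supplies exactly the freedom needed, and the only point requiring care is this uniform treatment of the two cases, handled by the identity $\xi_i^*\xi_i = 1$.
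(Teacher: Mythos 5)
Your proposal is correct and follows essentially the same route as the paper: the same ansatz $x_i = e_{i,i}M^{1/2}$ with $M$ supplied by Schur--Horn, and the same computations of $|Gx|^2$, $|Cx|^2$ and $|(Rx)^*|^2$ via $\xi_i^*\xi_i = 1$. Your factorization $Gx = U(\alpha\otimes 1)$ is just a slightly more structured presentation of the paper's direct expansion of $|Gx|^2$.
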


\begin{proof}
Let $N\in\Nb$ be the length of $a$ and $b$. The Schur-Horn theorem applied to $(a_i^2)$ and $(b_i^2)$ produces a positive matrix $M\in\Mb_N(\Cb)$ such that the eigenvalues of $M$ are given by $(a_i^2)$ and the diagonal of $M$ is given by $(b_i^2)$. Consider $x = (e_{i,i}M^{1/2})_{0\leq i \leq N}$. Then 
$$\md{Gx}^2 = \Sum{i,j}{} M^{1/2}e_{i,i}e_{j,j}M^{1/2} \otimes \xi_i^*\xi_j = \Sum{i}{} M^{1/2}e_{i,i}M^{1/2} \otimes 1 = M \otimes 1.$$ 
Similar computations give: $\md{Cx}^2 = M \otimes e_{1,1}$ and $\md{(Rx)^*}^2 = Diag(M) \otimes e_{1,1}$.
\end{proof}

\begin{proof}[Proof of proposition \ref{prop:counterexample}]
Fix $N\in \Nb$. Denote by $u_N$ the quantity $u_N := \Sum{i=1}{N} 1/i$. Let $a = (a_i)_{i\leq 1}$ and $b = (b_i)_{i\leq N}$ be defined by $a_1 = \sqrt{u_N}$, $a_i = 0$ for $i\geq 2$ and $b_i = \sqrt{1/i}$ for $i\leq N$, $b_i = 0$ for $i>N$. Applying the previous proposition, we obtain an element $x\in S(\M)$ such that $\Norm{Gx}_{2,\infty} = \sqrt{u_N} \approx \sqrt{\ln(N)}$ and $\Norm{Rx}_{2,\infty} = 1$. Hence, we cannot have $\Norm{.}_{\Hs_{2,\infty}} \les \Norm{.}_{R_{2,\infty} + C_{2,\infty}}$.

Now define $v_N := \Sum{i=1}{N} \Floor{N/i}$. Let $a = (\sqrt{\Floor{N/i}})_{i\leq N}$ and $b = (1)_{i\leq v_N}$. By applying, the previous proposition again, we obtain an element $x\in S(\M)$ such that $\Norm{Gx}_{2,\infty} = \sqrt{N}$ and $\Norm{Rx}_{2,\infty} = \sqrt{v_N} \approx \sqrt{N\ln(N)}$ which denies the possibility of having $\Norm{.}_{\Hs_{2,\infty}} \gtrsim \Norm{.}_{R_{2,\infty} \cap C_{2,\infty}}$.
\end{proof}

\begin{remark} \label{rem:counterexamplegeneral}
The flexibility given by proposition \ref{prop:useSchurHorn} means that the method can be applied to any symmetric space. It can be proven this way that for a symmetric space $E$ with the Fatou property:

$$ \Norm{.}_{\Hs_E} \approx \Norm{.}_{R_E \cap C_E} \Leftrightarrow E \in Int(L_2,L_{\infty}), $$

and if additionaly, $E$ is an interpolation space of $L_p$-spaces:

$$ \Norm{.}_{\Hs_E} \approx \Norm{.}_{R_E + C_E} \Leftrightarrow \exists p\in (0,2), E \in Int(L_p,L_{2}). $$

The remainder of the proofs of these results is essentially commutative and belongs to the classical theory of interpolation and function spaces. It can be found in \cite{Cad18Maj}. 
\end{remark}

\section{Technical lemmas} \label{section:technical lemmas}

\subsection{Related to the $K$-functional}

We start by proving lemma \ref{lem:matrixineq}.

\begin{proof}
\emph{i.} Define $b_{\e} = b+\e$. Set $c_{\e} = a^{1/2}b_{\e}^{-1/2}$. We only have to check that $c$ is a contraction.
$$c^*c = b_{\e}^{-1/2}a^{1/2}a^{1/2}b_{\e}^{-1/2} = b_{\e}^{-1/2}ab_{\e}^{-1/2} \leq b_{\e}^{-1/2}b_{\e}b_{\e}^{-1/2} = 1$$
Hence, $c_{\e}$ is a contraction. Let $c$ be a weak$^*$-limit of the $c_{\e}$. $c$ is a contraction and $a = cbc^*$.

\emph{ii.} If $a\leq b$ then $a^2 \leq a^{1/2}ba^{1/2}$. Now define $u$ to be a partial isometry appearing in the polar decomposition of $a^{1/2}b^{1/2}$ i.e $a^{1/2}b^{1/2} = u\md{a^{1/2}b^{1/2}}$. Then $u^*b^{1/2}a^{1/2}u = a^{1/2}b^{1/2}$ and $a^{1/2}ba^{1/2} = ub^{1/2}ab^{1/2}u^* \leq ub^2u^*$. 

\emph{iii.} For $\alpha \in [1,2]$, by operator convexity of the function $x\mapsto x^{\alpha}$, the result holds with $u=1$.
Now we proceed by induction. Suppose that the lemma in true for a $\alpha \geq 1$, we will show that it holds for  $\alpha' = 2\alpha$. By hypothesis, there exists a partial isometry $u$ such that:

$$(a + b)^{\alpha} \leq 2^{\alpha-1}u(a^{\alpha} + b^{\alpha})u^*.$$

And by ii. there exists $v$ such that:
\begin{align*}
(a + b)^{2\alpha} &\leq 2^{2\alpha-2}v(u(a^{\alpha} + b^{\alpha})u^*)^2 v^* \\
(a+b)^{\alpha'} &\leq 2^{\alpha'-2}vu(a^{\alpha} + b^{\alpha})^2(vu)^* \\
&\leq 2^{\alpha'-1}vu(a^{\alpha'} + b^{\alpha'})(vu)^{*} \\
\end{align*}
where the last inequality is a consequence of the operator convexity of $x \mapsto x^2$ or more precisely the inequality $(c + d)^2 \leq 2c^2 + 2d^2$ applied to $c = a^{\alpha}$ and $d = b^{\alpha}$.

\emph{iv.} Define $x = a + b$ and take two contractions $\alpha$ and $\beta$ such that $a^{1/2} = \alpha x^{1/2}$ and $b^{1/2} = \beta x^{1/2}$. We can suppose that $\alpha^* \alpha + \beta^* \beta = s(x)$. The operator $T : \M \to \M$ defined by $T(c) = \alpha c \alpha^*$ is positive and subunital. Hence, we can apply the Jensen inequality for the operator-concave function $t \mapsto t^{\theta}$ and we obtain $T(x)^{\theta} \geq T(x^{\theta})$. Similarly, $\beta x^{\theta} \beta^* \leq (\beta x \beta^*)^{\theta}$. Note also that, as we have used in the previous proof, there exist partial isometries $u$ and $v$ such that $x^{\theta /2}\alpha^*\alpha x^{\theta /2} = u\alpha x^{\theta} \alpha^* u^*$ and $x^{\theta /2}\beta^*\beta x^{\theta /2} = v \beta x^{\theta} \beta^* v^*$ (to see it, remark that by setting $y = \alpha x^{\theta /2}$ and $y = u^*\md{y}$ its polar decomposition, $x^{\theta /2}\alpha^*\alpha x^{\theta /2} = \md{y}^2$ and $\alpha x^{\theta} \alpha^* = u^*\md{y}^2 u$). Now, we can conclude by the following computation:

\begin{align*}
(a + b)^{\theta} &= x^\theta = x^{\theta /2}(\alpha^* \alpha + \beta^* \beta)x^{\theta /2} \\
&= x^{\theta /2}\alpha^*\alpha x^{\theta /2} + x^{\theta /2}\beta^*\beta x^{\theta /2} \\
&= u\alpha x^{\theta} \alpha^* u^* + v \beta x^{\theta} \beta^* v^* \\
&\leq u(\alpha x \alpha^*)^{\theta}u^* + v(\beta x \beta^*)^{\theta}v^* \\
&= ua^{\theta}u^* + vb^{\theta}v^*. \\
\end{align*}
\end{proof}

We now give the two lemmas used in the proof of proposition \ref{prop:Kt_independant of M}.

\begin{lemma}\label{lem:K_t approx finie}
Let $p,q\in (1,\infty]$ and $t>0$. Let $x\in L_p(\M) + L_q(\M)$, then
$$K_t(x,L_p(\M),L_q(\M)) = \supb{e\in\Pc_c(\M)} K_t(exe,L_p(\M),L_q(\M)).$$
\end{lemma}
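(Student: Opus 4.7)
My plan is to establish the easy inequality $\sup_{e\in\Pc_c(\M)} K_t(exe,L_p(\M),L_q(\M)) \leq K_t(x,L_p(\M),L_q(\M))$ first and then approach the reverse inequality by approximating $x$ by finite-rank compressions along a well-chosen net. The easy direction is immediate: given any decomposition $x = y + z$ with $y\in L_p(\M)$ and $z\in L_q(\M)$ and any $e\in\Pc_c(\M)$, the compression $exe = eye + eze$ satisfies $\Norm{eye}_p \leq \Norm{y}_p$ and $\Norm{eze}_q \leq \Norm{z}_q$, since two-sided multiplication by a projection is contractive on every $L_r(\M)$. Taking the infimum over decompositions of $x$ yields $K_t(exe, L_p(\M), L_q(\M)) \leq K_t(x, L_p(\M), L_q(\M))$, and then the supremum over $e$.

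For the reverse inequality, I choose an increasing net $(e_\alpha)$ of finite projections with $e_\alpha \uparrow 1$ strongly (available by semifiniteness of $\tau$). When both $p$ and $q$ are finite, for any $y\in L_p(\M)$ one has $e_\alpha y e_\alpha \to y$ in $L_p$-norm by density of finitely-supported operators in $L_p(\M)$, and similarly for $L_q$; hence $e_\alpha x e_\alpha \to x$ in $L_p(\M) + L_q(\M)$, and continuity of the $K_t$-norm gives $K_t(e_\alpha x e_\alpha, L_p(\M), L_q(\M)) \to K_t(x, L_p(\M), L_q(\M))$, forcing the desired equality together with the easy direction.

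The main obstacle is the case $q=\infty$, because for $z\in L_\infty(\M)$ the convergence $e_\alpha z e_\alpha \to z$ is only strong-$*$, not in $L_\infty$-norm. My plan here is to exploit formula~\eqref{eq:formuleKt} to reduce the claim to the equality $\int_0^{t^p}\mu_s(exe)^p\,ds = \int_0^{t^p}\mu_s(x)^p\,ds$ for a well-chosen $e$. Using the polar decomposition $x = u|x|$, I set $f = \Ind_{[\lambda,\infty)}(|x|)$, $g = \Ind_{[\lambda,\infty)}(|x^*|)$, and $e = f \vee g$; picking $\lambda > 0$ such that $\tau(g) > t^p$, the identity $|x^*|u = u|x|$ gives $xf = gx$ and $(gx)(gx)^* = |x^*|^2 g$. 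Since $g$ commutes with $|x^*|^2$ and $e \geq g$, one derives the operator inequality $(exe)(exe)^* \geq |x^*|^2 g$, which together with $\mu_s(exe) \leq \mu_s(x)$ forces $\mu_s(exe) = \mu_s(x)$ for $s\in[0,t^p]$. The residual subtlety --- when no admissible $\lambda$ exists because $\tau(\Ind_{[\lambda,\infty)}(|x^*|))$ jumps from a value $\leq t^p$ to $+\infty$ at some $\lambda_0 > 0$ --- is handled by augmenting $e$ with an auxiliary finite subprojection sitting inside the "plateau" of $|x^*|$ at height $\lambda_0$, which exists by semifiniteness of $\tau$ (essentially the content of the companion lemma~\ref{lem:tech_approx_mu_finite} invoked in the proof of proposition~\ref{prop:Kt_independant of M}).
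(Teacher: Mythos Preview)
Your approach differs from the paper's, which handles all $p,q\in(1,\infty]$ in one stroke by duality: since $(L_p(\M)+tL_q(\M))^* = L_{p'}(\M)\cap t^{-1}L_{q'}(\M)$ with $p',q'\in[1,\infty)$, the set $\M_c$ is norm-dense in the predual, and for $y\in\M_c$ one has $\tau(xy)=\tau(exe\cdot y)$ with $e$ the (finite) support of $y$. This gives the exact equality with no case distinction. Your approximation argument for finite $p,q$ is a legitimate alternative route and works.

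The genuine gap is in your treatment of $q=\infty$. You appeal to formula~\eqref{eq:formuleKt}, but in the paper that formula is stated for $K_t(x,p,\infty):=K_t(\mu(x),L_p(\Rb^+),L_\infty(\Rb^+))$, the \emph{commutative} $K$-functional; identifying it with the noncommutative $K_t(x,L_p(\M),L_\infty(\M))$ is precisely Proposition~\ref{prop:Kt_independant of M}, whose proof relies on the lemma you are trying to establish. So the argument is circular as written. Even waiving that, \eqref{eq:formuleKt} is only an equivalence with constant $A_p$ (and $A_p\neq 1$ for $p>1$), so the equality $\int_0^{t^p}\mu_s(exe)^p\,ds=\int_0^{t^p}\mu_s(x)^p\,ds$ would only yield $K_t(exe,L_p(\M),L_\infty(\M))\approx_{A_p^2} K_t(x,L_p(\M),L_\infty(\M))$, not the exact equality claimed in the lemma. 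More fundamentally, you have not justified why the noncommutative $K$-functional at parameter $t$ depends only on $(\mu_s(x))_{s\le t^p}$; that is what you implicitly use, and it is neither obvious nor available at this point of the paper. The duality argument avoids all of this.
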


\begin{proof}
Define $E = L_{p'}(\M) \cap t^{-1}L_{q'}(\M)$ with $p' = (1-p^{-1})^{-1}$ and $q' = (1-q^{-1})^{-1}$ and note that $L_p(\M) + tL_q(\M) = E^*$. Since $p'$ and $q'$ belong to $[1,\infty)$, $\M_c$ is dense in $E$. Hence,
\begin{align*}
K_t(x,L_p(\M),L_q(\M)) 
&= \sup \{\md{\tau(xy)} : y \in \M_c, \Norm{y}_{E} = 1\} \\
&= \sup \{\md{\tau(xeye)} : e \in \Pc_c(\M), y \in \M_c, \Norm{y}_{E} = 1\} \\
&= \supb{e\in\Pc_c(\M)} \Norm{exe}_{L_p(\M) + tL_q(\M)}\\
&= \supb{e\in\Pc_c(\M)} K_t(exe,L_p(\M),L_q(\M)).
\end{align*}
\end{proof}

\begin{lemma}\label{lem:tech_approx_mu_finite}
Let $x\in L_0(\M)$, $\epsilon >0$ and $f\in \Pc_c(L_{\infty}(0,\infty))$. There exists a finite projection $e\in \M$ such that:
$$\mu(\mu(x)f) \leq \mu(exe) + \e.$$
\end{lemma}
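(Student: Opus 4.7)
The plan is to construct $e$ as the join of a spectral projection $q$ of $\md{x}$ and the corresponding spectral projection $p$ of $\md{x^*}$ transported through the polar decomposition $x=v\md{x}$. This ensures that $e$ simultaneously covers the right and the left singular subspaces of $x$ above a chosen threshold, without which $exe$ can be annihilated even though $\md{x}$ is large on the support of $e$: for instance $x=e_{1,2}\in\B(\ell^2)$ and $e=e_{2,2}$ gives $exe=0$ while $\mu(e\md{x}e)=\mu(x)$. The $\e$-margin will only be used to deal with the tail $t\geq\tau(q)$.

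Construction. Write $x=v\md{x}$ with $v^*v=s(\md{x})$, and pick any $\lambda\in(0,\e)$. Set
$$q:=\Ind_{(\lambda,\infty)}(\md{x}),\qquad p:=vqv^*,\qquad e:=p\vee q.$$
Since $\md{x^*}^2=v\md{x}^2v^*$ and functional calculus commutes with conjugation by $v$ on the functions vanishing at $0$, $p$ is the spectral projection $\Ind_{(\lambda,\infty)}(\md{x^*})$, so $\tau(p)=\tau(q)<\infty$ and $\tau(e)\leq 2\tau(q)<\infty$.

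Key identity $(1-p)xq=0$. Since $q$ commutes with $\md{x}$, $xq=v\md{x}q=vq\md{x}$, and therefore
$$pxq=(vqv^*)(vq\md{x})=vq(v^*v)q\md{x}=vq\md{x}=xq,$$
using $q\leq s(\md{x})=v^*v$. As $e\geq p$, this yields $exq=xq$; as $e\geq q$, we further get $xq=(exe)q$. By the standard inequality $\mu_t(AB)\leq\mu_t(A)\Norm{B}_\infty$ with $A=exe$ and $B=q$, we obtain $\mu_t(xq)\leq\mu_t(exe)$ for every $t$. Moreover, $\md{xq}^2=qx^*xq=q\md{x}^2$ (as $q$ commutes with $\md{x}$), whose decreasing rearrangement equals $\mu(x)^2$ on $[0,\tau(q))$ and $0$ beyond; hence $\mu_t(exe)\geq\mu_t(x)$ for $t<\tau(q)$.

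Conclusion and main obstacle. Monotonicity of the rearrangement together with $\mu(x)f\leq\mu(x)$ gives $\mu(\mu(x)f)\leq\mu(x)$. For $t<\tau(q)$ we then have $\mu_t(exe)\geq\mu_t(x)\geq\mu_t(\mu(x)f)$. For $t\geq\tau(q)$ the defining equality $\tau(\Ind_{(\lambda,\infty)}(\md{x}))=\tau(q)$ and the inf-formula for $\mu$ give $\mu_t(x)\leq\lambda<\e$, so $\mu_t(\mu(x)f)\leq\mu_t(x)<\e\leq\mu_t(exe)+\e$. The main obstacle is precisely the gap between $\mu(e\md{x}e)$ and $\mu(exe)$ for non-normal $x$: a single spectral projection of $\md{x}$ would give a clean lower bound on the former but can completely destroy the latter. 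Enlarging $q$ by $p=vqv^*$ is what restores the algebraic identity $(1-p)xq=0$, which is the bridge between the easy bound on $\mu(xq)$ and the desired bound on $\mu(exe)$.
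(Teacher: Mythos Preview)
Your $p\vee q$ construction correctly addresses an issue the paper's own proof glosses over: for non-normal $x$, a spectral projection of $\md{x}$ alone need not control $\mu(exe)$ --- your example $x=e_{1,2}$, $e=e_{2,2}$ applies verbatim to the paper's $e=e_1+e_2$, which lies entirely in the spectral algebra of $\md{x}$. The paper's argument is thus only valid for positive $x$ (which suffices for its sole application in Proposition~\ref{prop:Kt_independant of M}); your identity $(1-p)xq=0$ is the right repair for the lemma as stated.

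That said, your proof has a genuine gap: the claim $\tau(q)<\infty$ is unjustified. Membership in $L_0(\M)$ only guarantees $\tau(\Ind_{(\lambda,\infty)}(\md{x}))<\infty$ for \emph{sufficiently large} $\lambda$, not for every $\lambda\in(0,\e)$. Concretely, take $\M=L_\infty(0,\infty)$, $x=1$, and $\e\leq 1$: then for every $\lambda\in(0,\e)$ your $q$ is the identity and has infinite trace, so $e$ is not finite. (Notice that your argument never uses the finiteness of $f$; if it worked it would produce a finite $e$ with $\mu(x)\leq\mu(exe)+\e$ pointwise, which is plainly false for $x=1$.) The paper avoids this by anchoring the threshold to $T=\int f$: setting $a=\mu_T(x)$ forces $\tau(\Ind_{(a,\infty)}(\md{x}))\leq T$, and the shortfall is made up by a \emph{finite} subprojection of $\Ind_{(a-\e,a]}(\md{x})$, available by semifiniteness. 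To obtain a complete proof for general $x$, take $q$ to be the paper's $e_1+e_2$ and then set $e=vqv^*\vee q$ as you do; your inequality $\mu_t(exe)\geq\mu_t(xq)=\mu_t(\md{x}q)$ still holds (since $q$ commutes with $\md{x}$), and $\mu_t(\md{x}q)=\mu_t(x)$ on $[0,t_1)$ while $\mu_t(\md{x}q)>a-\e$ on $[t_1,T)$, which gives the conclusion.
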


\begin{proof}
Let $f\in \Pc_c(L_{\infty}(0,\infty))$ such that $\Int{\Rb^+}{} f = T$, then 
$$\mu(\mu(x)f) \les \mu(x)\Ind_{(0,T)} = \mu(\mu(x)\Ind_{(0,T)}).$$
Hence, it suffices to consider projections of the form $\Ind_{(0,T)}$, $T>0$. Let $a = \mu_T(x)$ and let $e_1 = \Ind_{(a,\infty)}(\md{x})$. By definition of $\mu$, $e_1$ is finite, write $t_1 = \tau(e_1)$ and note that $\mu_t(x) = a$ if $t_1\leq t \leq T$. Since $a = \mu(T)$, $\tau(\Ind_{(a-\e,\infty)}) \geq T$, hence $\tau(\Ind_{(a-\e,a]}) \geq T-t_1$. Let $e_2$ be a finite projection such that $e_2 \leq \Ind_{(a-\e,a]}$ and $\tau(e_2) \geq T-t_1$. It is possible to find such a projection since $\tau$ is semifinite. Define $e = e_1 + e_2$. By construction, for $t \in (0,t_1)$, $\mu_t(exe) = \mu_t(x) = f(t)\mu_t(x)$ and for $t\in [t_1,T)$, $\mu_t(exe) > a - \e = f(t)\mu_t(x) - \e$. So $e$ satisfies the conditions of the lemma.
\end{proof}

\subsection{Rows and columns}

Let us now prove the lemma that we used several times when manipulating rows and columns.

\begin{lemma} \label{lem:trick}
Let $p>0$, $y,z\in S(\M)$ and $e,f\in \Pc(\M)$ such that $e$ commutes with $\md{(Ry)^*}$. Then,
\begin{enumerate}[label=\text{\roman*.}]
\item $e\md{(Ry)^*}^p = \md{(R(ey))^*}^p,$ 
\item $\Norm{Ry}_p^p = \Norm{R(ey)}_p^p + \Norm{R(e^{\bot}y)}_p^p,$
\item $\md{C(fz)}^2 \leq \md{Cz}^2$ and consequently $\Norm{C(fz)}_p \leq \Norm{Cz}_p.$
\end{enumerate}
\end{lemma}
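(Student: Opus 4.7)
The three parts are direct consequences of unpacking the definitions of $R$ and $C$, together with the identification $\M \cong \M \otimes e_{1,1} \subset \M_{\B}$, under which left multiplication of $y$ by $a \in \M$ corresponds to left multiplication of $Ry$ by $a \otimes e_{1,1}$ (and similarly for $Cz$).

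For (i), I would simply compute
\[
\md{(R(ey))^*}^2 = R(ey) R(ey)^* = e\,\md{(Ry)^*}^2\,e.
\]
The hypothesis that $e$ commutes with $\md{(Ry)^*}$ (equivalently, with $\md{(Ry)^*}^2$) collapses this to $e\,\md{(Ry)^*}^2$, which is a product of two commuting positive operators. Working inside the abelian von Neumann subalgebra generated by $e$ and $\md{(Ry)^*}^2$, ordinary functional calculus yields $(eA)^{p/2} = e A^{p/2}$ for any projection $e$ commuting with $A \geq 0$, which gives (i). For (ii), I would apply the same reasoning to $e^{\bot}$, which also commutes with $\md{(Ry)^*}$, and sum the two resulting identities to get
\[
\md{(R(ey))^*}^p + \md{(R(e^{\bot}y))^*}^p = (e + e^{\bot})\,\md{(Ry)^*}^p = \md{(Ry)^*}^p;
\]
taking the trace then produces (ii).

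For (iii), I would expand $\md{C(fz)}^2 = \Sum{n\geq 0}{} z_n^* f z_n$. Since $f$ is a projection, $f \leq 1$, so termwise $z_n^* f z_n \leq z_n^* z_n$, and summing gives the operator inequality $\md{C(fz)}^2 \leq \md{Cz}^2$. Operator monotonicity of $t \mapsto t^{1/2}$ then yields $\md{C(fz)} \leq \md{Cz}$, hence $\mu_s(C(fz)) \leq \mu_s(Cz)$ for all $s>0$ (see \cite{FackKosaki}), and integrating against $s^{p-1} ds$ gives $\Norm{C(fz)}_p \leq \Norm{Cz}_p$ for every $p>0$.

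There is no substantive obstacle; the only care needed is to read the commutation hypothesis in the correct algebra (equivalently, $e$ commutes with $\Sum{n\geq 0}{} y_n y_n^* \in \M$) so that the collapse $e \md{(Ry)^*}^2 e = e \md{(Ry)^*}^2$ and the subsequent functional calculus in (i) are justified.
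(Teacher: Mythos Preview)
Your proof is correct and follows essentially the same approach as the paper: the computations in (i)--(iii) are identical up to the order of steps, and your part (iii) spells out via operator monotonicity and singular values what the paper leaves as ``consequently.'' (One minor slip: the $L_p$-norm is $\int_0^\infty \mu_s(\cdot)^p\,ds$, not an integral against $s^{p-1}\,ds$, but this does not affect the argument.)
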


\begin{proof}
i. This is a direct computation:
\begin{align*}
e\md{(Ry)^*}^p &= e^{p/2}(\md{(Ry)^*}^2)^{p/2} = (e\md{(Ry)^*}^2)^{p/2} = (e(\Sum{i}{} y_iy_i^*)e)^{p/2} \\
&= (\Sum{i}{} (ey_i)(ey_i)^*)^{p/2} = \md{R(ey)^*}^p. 
\end{align*}
ii. Write $f := e^{\bot}$. Using i., this is again a direct computation:
\begin{align*}
\Norm{Ry}_p^p &= \tau(\md{(Ry)^*}^p) = \tau((e+f)\md{(Ry)^*}^p) = \tau(e\md{(Ry)^*}^p)+\tau(f\md{(Ry)^*}^p)\\
 &= \tau(\md{(R(ey))^*}^p)+\tau(\md{(R(fy))^*}^p) = \Norm{R(ey)}_p^p + \Norm{R(fy)}_p^p.\\
\end{align*}

iii. Indeed, $\md{C(fz)}^2 = \Sum{i}{} z_i^*fz_i \leq \Sum{i}{} z_i^*z_i = \md{Cz}^2.$ 
\end{proof}

\subsection{$K$-functional of commutators}

Now, we prove lemma \ref{lem:ineq_commutator}. We recall the following proposition that is the main result of \cite{Ric18} (proposition $4.3$). 

\begin{property} \label{prop:ando8}
Let $0<p<q\leq \infty$, $\theta \in (0,1)$ and $x,y\in (L_p(\M) + L_q(\M))^{sa}$. Then for all $t>0$ and $f: x \mapsto \md{x}^{\theta}$ or $f: x \mapsto \text{sgn}(x)\md{x}^{\theta}$, we have:
$$ K_{t^{\theta}}(f(x) - f(y),p/\theta,q/\theta) \les K_t(x-y,p,q)^{\theta}.$$
\end{property}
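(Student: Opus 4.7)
The plan is to transfer a near-optimal decomposition of $x-y$ into one of $f(x)-f(y)$ by means of a H\"older-type continuity estimate for fractional powers. Fix $\varepsilon>0$ and pick a decomposition $x-y=a+b$ with $a\in L_p(\M)^{sa}$, $b\in L_q(\M)^{sa}$ satisfying
$$\Norm{a}_p+t\Norm{b}_q \leq (1+\varepsilon)K_t(x-y,p,q).$$
Selfadjointness of $a,b$ can be arranged at the cost of a universal constant by symmetrizing any initial decomposition, using that $x-y$ is selfadjoint. Set $z := y+a$, so that $z$ is selfadjoint, $z-y=a\in L_p(\M)^{sa}$ and $x-z=b\in L_q(\M)^{sa}$. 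This yields the candidate splitting
$$f(x)-f(y) = \bigl(f(x)-f(z)\bigr) + \bigl(f(z)-f(y)\bigr).$$

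The crucial analytic input is the noncommutative H\"older-type estimate
$$\Norm{f(u)-f(v)}_{r/\theta} \les_{r,\theta} \Norm{u-v}_r^{\theta}$$
valid for $u,v\in L_0(\M)^{sa}$ with $u-v\in L_r(\M)$ and any $r>0$, where $f$ is the even or the odd fractional power. Applied to $(x,z)$ with exponent $q$ and to $(z,y)$ with exponent $p$, one gets a splitting of $f(x)-f(y)$ whose $L_{p/\theta}$-piece has norm $\les \Norm{a}_p^\theta$ and whose $L_{q/\theta}$-piece has norm $\les \Norm{b}_q^\theta$. Combining with the elementary concavity bound $c^\theta+d^\theta \leq 2^{1-\theta}(c+d)^\theta$ for $c,d\geq 0$, one obtains
$$K_{t^\theta}(f(x)-f(y),p/\theta,q/\theta) \leq \Norm{f(z)-f(y)}_{p/\theta}+t^\theta\Norm{f(x)-f(z)}_{q/\theta} \les \Norm{a}_p^\theta + t^\theta\Norm{b}_q^\theta \les (1+\varepsilon)^\theta K_t(x-y,p,q)^\theta,$$
and letting $\varepsilon\to 0$ finishes the proof modulo the H\"older estimate.

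The main obstacle is that H\"older-type inequality itself, which is the substantial content of \cite{Ric18}. In the scalar case it reduces to $\bigl||s|^\theta-|t|^\theta\bigr|\leq|s-t|^\theta$, but noncommutatively the argument is delicate, especially when $r/\theta<1$, where one is working in a genuine quasi-Banach space and standard duality arguments fail. A natural route, followed in \cite{Ric18}, goes via the integral representation $|u|^\theta=c_\theta\int_0^\infty\lambda^{\theta-1}|u|(\lambda+|u|)^{-1}\,d\lambda$, which reduces matters to controlling resolvent differences $\lambda(\lambda+|u|)^{-1}-\lambda(\lambda+|v|)^{-1}$; these can in turn be handled by passing to Cayley transforms, where the relevant operators become unitaries and the necessary $L_p$-factorisation techniques for $p<1$ become available. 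The odd power $\mathrm{sgn}(\cdot)|\cdot|^\theta$ is handled by an analogous representation with $u$ in place of $|u|$.
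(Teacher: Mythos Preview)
The paper does not give its own proof of this statement; it is quoted verbatim as proposition~4.3 of \cite{Ric18}. So there is nothing to compare against on the paper's side beyond the citation.

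Your reduction argument is correct and is the standard way to pass from single-exponent estimates to a $K$-functional estimate: choosing an intermediate point $z=y+a$ and invoking the H\"older-type bound $\Norm{f(u)-f(v)}_{r/\theta}\les\Norm{u-v}_r^{\theta}$ once with $r=p$ and once with $r=q$ does yield the claimed inequality, with the concavity step $c^\theta+d^\theta\leq 2^{1-\theta}(c+d)^\theta$ absorbing the two pieces. You also correctly isolate the genuine analytic content as lying entirely in the noncommutative H\"older continuity of fractional powers, which is precisely what \cite{Ric18} provides.

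One point worth making explicit in your write-up: for the two applications of the H\"older estimate you need it in the form where only the \emph{difference} $u-v$ lies in $L_r$, not $u$ and $v$ separately, since in your argument $x,y,z$ are a priori only in $L_p+L_q$. This is indeed how the result is stated in \cite{Ric18}, but it is worth flagging since weaker formulations (requiring $u,v\in L_r$) would not suffice here. With that caveat, your proposal is a faithful account of why the $K$-functional statement follows from the core estimate in \cite{Ric18}.
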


The relationship between Mazur maps and anti-commutators has already been explicited in \cite{Ric15}. The arguments rely on $2 \times 2$ matrix tricks and the Cayley transform. We reproduce them in our context for completion but the following proofs do not contain any new idea. We break it down into two steps, the first one is given by the following lemma and uses the Cayley transform.

\begin{lemma} \label{lem:cayleytransform}
Let $p\in (0,\infty)$, $q\in (0,\infty]$, $\theta \in (0,1)$. For all $t>0$, $f: x \mapsto \md{x}^{\theta}$ or $f: x \mapsto \text{sgn}(x)\md{x}^{\theta}$, $\alpha \in \M^{sa}$ and $b\in\M^{sa}$ :
$$K_{t^\theta}(bf(\alpha) - f(\alpha)b,p/\theta,q/\theta) \les \bbox{K_t(b\alpha - \alpha b,p,q)}^{\theta}\Norm{b}_{\infty}^{1-\theta},$$
where the implicit constant only depends on $p,q$ and $\theta$.
\end{lemma}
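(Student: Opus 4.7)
The plan is to reduce the commutator estimate to the difference estimate of proposition~\ref{prop:ando8} by passing through the Cayley transform. A first observation is that both sides of the inequality are homogeneous of degree~$1$ in $b$: the left-hand side is linear, while on the right the product $K_t(b\alpha - \alpha b, p,q)^\theta\|b\|_\infty^{1-\theta}$ scales as $\lambda^\theta \cdot \lambda^{1-\theta} = \lambda$. I would therefore first normalize to $\|b\|_\infty \leq 1$ and aim to prove the inequality without the $\|b\|_\infty^{1-\theta}$ factor, recovering it by rescaling at the end.

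The core step is the unitary case. For a unitary $u \in \M$, the elements $u\alpha u^*$ and $\alpha$ are self-adjoint, and functional calculus gives $u f(\alpha) u^* = f(u \alpha u^*)$. Hence
\[
f(u\alpha u^*) - f(\alpha) = [u, f(\alpha)]\, u^*, \qquad u \alpha u^* - \alpha = [u, \alpha]\, u^*.
\]
Applying proposition~\ref{prop:ando8} to $x = u\alpha u^*$ and $y = \alpha$, and cancelling the right-multiplication by the unitary $u^*$ (which preserves $K$-functionals), yields
\[
K_{t^\theta}\bigl([u, f(\alpha)], p/\theta, q/\theta\bigr) \les K_t\bigl([u, \alpha], p, q\bigr)^\theta.
\]

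To transfer this to a self-adjoint $b$ with $\|b\|_\infty \leq 1$, I would introduce the Cayley transform $u := (b - i)(b+i)^{-1} = I - 2i(b+i)^{-1}$, which is a unitary in $\M$. The identity $[(b+i)^{-1}, c] = -(b+i)^{-1}[b, c](b+i)^{-1}$, applied to $c = \alpha$ and to $c = f(\alpha)$, yields the crucial relation
\[
[u, c] = 2i\,(b+i)^{-1}\,[b, c]\,(b+i)^{-1}.
\]
Because $b$ is self-adjoint, $\|(b+i)^{-1}\|_\infty \leq 1$ and $\|b+i\|_\infty \leq \sqrt{2}$; bilateral multiplication by these operators, being uniformly bounded on every $L_r(\M)$, changes the $K$-functional only by universal constants. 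The resulting two-sided equivalences $K_t([u, \alpha], p,q) \approx K_t([b, \alpha], p,q)$ and $K_{t^\theta}([u, f(\alpha)], p/\theta, q/\theta) \approx K_{t^\theta}([b, f(\alpha)], p/\theta, q/\theta)$, combined with the unitary-case estimate, settle the case $\|b\|_\infty \leq 1$; undoing the normalization restores the factor $\|b\|_\infty^{1-\theta}$.

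The only delicate point is the claim that bilateral multiplication by $(b+i)^{\pm 1}$ preserves the $K$-functional up to a universal constant. This follows from the elementary inequality $K_t(axb, p, q) \leq \|a\|_\infty \|b\|_\infty K_t(x, p, q)$, which is immediate from the definition of the $K$-functional together with the triangle inequality in each $L_r$. The $2 \times 2$ matrix trick alluded to by the paper, following \cite{Ric15}, is presumably a compact way to handle the two-sided multiplication symmetrically, but introduces no additional idea beyond the Cayley identity above.
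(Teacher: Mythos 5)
Your proof is correct and follows essentially the same route as the paper's: normalize by homogeneity, pass to the Cayley transform $u$ of $b$, reduce the commutator $[u,f(\alpha)]$ to the difference $f(u\alpha u^*)-f(\alpha)$ so that Proposition \ref{prop:ando8} applies, and transfer back using the uniform bounds on $(b\pm i)^{\pm1}$. The only cosmetic difference is that you isolate the unitary case first and use the identity $[u,c]=2i(b+i)^{-1}[b,c](b+i)^{-1}$ directly, whereas the paper routes the same computation through $b=2i(1-u)^{-1}-i$; also note the $2\times2$ matrix trick belongs to the deduction of Lemma \ref{lem:ineq_commutator} (anticommutators), not to this lemma.
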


\begin{proof}
We can assume that $\Norm{b}_{\infty} = 1$ by homogeneity. Since $b = b^*$, the Cayley transform is defined by
$$u = (b+i)(b-i)^{-1},\; b = 2i(1-u)^{-1} - i.$$
Note that $u$ is unitary and by the second formula $\Norm{(1-u)^{-1}}_{\infty} \leq \frac{1}{\sqrt{2}}$. These facts allow us to conclude by a computation using \ref{prop:ando8}.
\begin{align*}
K_{t^\theta}(bf(\alpha) - f(\alpha)b,p/\theta,q/\theta) &\leq 
2K_{t^\theta}((1-u)^{-1}f(\alpha) - f(\alpha)(1-u)^{-1},p/\theta,q/\theta) \\
&\leq 2\Norm{(1-u)^{-1}}_{\infty}^2 K_{t^\theta}((1-u)f(\alpha) - f(\alpha)(1-u),p/\theta,q/\theta) \\
&\leq K_{t^\theta}(f(u^*\alpha u) - f(\alpha),p/\theta,q/\theta) 
\intertext{using proposition \ref{prop:ando8},}
&\les K_t(\alpha u - u \alpha ,p,q)^{\theta} \\
&\les \Norm{(b-i)^{-1}}_{\infty}^{2\theta} \bbox{K_t((b-i)\alpha(b+i) - (b+i)\alpha (b-i),p,q)}^{\theta} \\
&\les K_t(\alpha b - b \alpha,p,q)^{\theta}.
\end{align*}
\end{proof}

We can now conclude the main proof using a matrix trick.

\begin{proof}[Proof of lemma \ref{lem:ineq_commutator}]
Let $b\in \M$ and $\alpha,\beta \in \M^+$. Consider:
$$ b' = \begin{pmatrix}
    0  & b \\
    b^*  & 0  \\

\end{pmatrix}
,\; \alpha' = \begin{pmatrix}
    \alpha & 0	 \\
    0      & -\beta  \\
\end{pmatrix}.$$
Note that in general, the matrix $b'$ has the same distribution as $b \oplus b$. Hence, $\Norm{b'}_{\infty} = \Norm{b}_{\infty}$ and for all $t>0$,
$$K_t(b',p,q) \approx K_{t}(b,p,q).$$ 
Indeed, $x \mapsto K_t(x,p,q)$ is a quasi-norm so $K_t(b,p,q) \les K_{t}(b',p,q)$ and $\mu(b') = \mu(b \oplus b) \geq \mu(b)$ so $K_t(b',p,q) \geq K_t(b,p,q)$.
It is now straightforward to check that for all $t>0$,
\[ 
K_t(\alpha b + b \beta, p,q) \approx K_{t}(\alpha' b' - b' \alpha',p,q) \addtag\label{eq:Kt2x2matrix}
\]
and that for $f: x \mapsto \text{sgn}(x)\md{x}^{\theta}$, 
\[
K_t(f(\alpha) b + b f(\beta), p,q) \approx K_{t}(f(\alpha') b' - b' f(\alpha'),p,q). \addtag\label{eq:Kt2x2matrix_with_f}
\]
By lemma \ref{lem:cayleytransform}  applied to $b',\alpha',\theta$:
$$K_{t^\theta}(b'f(\alpha') - f(\alpha')b',p/\theta,q/\theta) \les \bbox{K_t(b'\alpha' - \alpha'b',p,q)}^{\theta}\Norm{b'}_{\infty}^{1-\theta}.$$
Hence, we can conclude by \eqref{eq:Kt2x2matrix} and \eqref{eq:Kt2x2matrix_with_f} and obtain:
$$K_{t^\theta}(bf(\alpha) - f(\alpha)b,p/\theta,q/\theta) \les \bbox{K_t(b\alpha - \alpha b,p,q)}^{\theta}\Norm{b}_{\infty}^{1-\theta}.$$
\end{proof}

\subsection*{Acknowledgements}

I am very grateful to my advisor Eric Ricard for many valuable discussions, his guidance throughout the making of this article, and in particular in developing section \ref{Section alpha}. I would also like to thank Fedor Sukochev and Dmitriy Zanin for enlightning exchanges on section \ref{section:counterexample} and interpolation theory.

\bibliographystyle{plain}
\bibliography{WKbib}

\
{\footnotesize%

 \textsc{Normandie Univ, UNICAEN, CNRS, Laboratoire de math\'ematiques Nicolas Oresme, 14000 Caen, France} \par
 \textit{E-mail address}: \texttt{leonard.cadilhac@unicaen.fr} 
}

\end{document}